\newtheorem{theorem}{Theorem}[section]
\newtheorem{proposition}[theorem]{Proposition}
\newtheorem{corollary}[theorem]{Corollary}
\newtheorem{lemma}[theorem]{Lemma}
\newtheorem{definition}[theorem]{Definition}
\newtheorem{remark}[theorem]{Remark}
\newcommand{\wt}{\widetilde}
\newcommand{\wh}{\widehat} 
\newcommand{\ot}{\otimes}
\newcommand{\gr}{\mathrm{gr}\,}
\newcommand{\End}{\mathrm{End}}
\newcommand{\onto}{\twoheadrightarrow}
\newcommand{\sbullet}{%
  \hbox{\fontfamily{lmr}\fontsize{.4\dimexpr(\f@size pt)}{0}\selectfont\textbullet}}
\DeclareRobustCommand{\mathbullet}{\accentset{\sbullet}}
\newcommand{\mfa}{\mathfrak{a}}
\newcommand{\mfg}{\mathfrak{g}}
\newcommand{\mfH}{\mathfrak{H}}
\newcommand{\mfs}{\mathfrak{s}}
\newcommand{\mft}{\mathfrak{t}}
\newcommand{\mfgl}{\mathfrak{g}\mathfrak{l}}
\newcommand{\mfsl}{\mathfrak{s}\mathfrak{l}}
\newcommand{\mcA}{\mathcal{A}}
\newcommand{\mcC}{\mathcal{C}}
\newcommand{\mcD}{\mathcal{D}}
\newcommand{\mcE}{\mathcal{E}}
\newcommand{\mcF}{\mathcal{F}}
\newcommand{\mcH}{\mathcal{H}}
\newcommand{\mcI}{\mathcal{I}}
\newcommand{\mcP}{\mathcal{P}}
\newcommand{\mcV}{\mathcal{V}}
\newcommand{\mcX}{\mathcal{X}}
\newcommand{\mbA}{\mathbf{A}}
\newcommand{\mbC}{\mathbf{C}}
\newcommand{\mbD}{\mathbf{D}}
\newcommand{\mbF}{\mathbf{F}}
\newcommand{\mbG}{\mathbf{G}}
\newcommand{\mbH}{\mathbf{H}}
\newcommand{\mbU}{\mathbf{U}}
\newcommand{\mbX}{\mathbf{X}}
\newcommand{\C}{\mathbb{C}}
\newcommand{\Z}{\mathbb{Z}}
\newcommand{\mdH}{\mathds{H}}
\newcommand{\mdX}{\mathds{X}}
\newcommand{\msc}{\mathsf{c}}
\newcommand{\al}{\alpha}
\newcommand{\del}{\delta}
\newcommand{\eps}{\epsilon}
\newcommand{\veps}{\varepsilon}
\newenvironment{NB}{
\color{blue}{\bf Note}. \footnotesize
}{}
\title{Vertex Representations for Yangians of Kac-Moody algebras}
\author[N. Guay]{Nicolas Guay}
\address{N.G.: University of Alberta, Department of Mathematical and Statistical Sciences, Edmonton, AB T6G 2G1, Canada.}
\email{nguay@ualberta.ca}
\author[V. Regelskis]{Vidas Regelskis}
\address{V.R.: University of York, Department of Mathematics, York, YO10 5DD, UK and
Vilnius University, Institute of Theoretical Physics and Astronomy, Saul\.etekio av.~3, Vilnius 10257, Lithuania.}
\email{vidas.regelskis@gmail.com}
\author[C. Wendlandt]{Curtis Wendlandt}
\address{C.W.: University of Alberta, Department of Mathematical and Statistical Sciences, Edmonton, AB T6G 2G1, Canada.}
\email{cwendlan@ualberta.ca}
\subjclass[2010]{Primary 17B37; Secondary 81R10, 17B69} %Change depending on paper! https://mathscinet.ams.org/msc/msc2010.html
\numberwithin{equation}{section}
\newcommand*\patchAmsMathEnvironmentForLineno[1]{%
  \expandafter\let\csname old#1\expandafter\endcsname\csname #1\endcsname
  \expandafter\let\csname oldend#1\expandafter\endcsname\csname end#1\endcsname
  \renewenvironment{#1}%
     {\linenomath\csname old#1\endcsname}%
     {\csname oldend#1\endcsname\endlinenomath}}% 
\newcommand*\patchBothAmsMathEnvironmentsForLineno[1]{%
  \patchAmsMathEnvironmentForLineno{#1}%
  \patchAmsMathEnvironmentForLineno{#1*}}%
\begin{document}
\begin{abstract}
Using vertex operators, we build representations of the Yangian of a simply laced Kac-Moody algebra and of its double. As a corollary, we prove the PBW property for simply laced affine Yangians.
\end{abstract}

\maketitle

\setcounter{tocdepth}{1} %Comment this out if you want to include subsections in table of contents
\tableofcontents

\allowdisplaybreaks

%%%%%%%%%%%%%%%%%%%%%%%%%%%%%%%%%%%%%%%%%%%%%%%%%%%%%%%%%%%%%%%%%%%%%%%%
%%%%%%%%%%%%%%%%%%%%%%%%%%%%%%%%%%%%%%%%%%%%%%%%%%%%%%%%%%%%%%%%%%%%%%%%
% Section: Introduction
%%%%%%%%%%%%%%%%%%%%%%%%%%%%%%%%%%%%%%%%%%%%%%%%%%%%%%%%%%%%%%%%%%%%%%%%
%%%%%%%%%%%%%%%%%%%%%%%%%%%%%%%%%%%%%%%%%%%%%%%%%%%%%%%%%%%%%%%%%%%%%%%%

\section{Introduction}

Vertex operators originate from dual resonance models in theoretical physics. They were used by I. Frenkel and V. Kac in their groundbreaking paper \cite{FrKa} to build an explicit realization of the basic representation of a simply-laced affine Lie algebra. Their work was later extended to non-simply laced affine Lie algebras \cite{BTM,GNOS}, to quantum affine algebras \cite{FrJi,Be,JiMi,JKM,Ji3,Ji4,ChJi}, to twisted quantum affine algebras and more general quantum Kac-Moody algebras \cite{Ji1,Ji2}, to toroidal and quantum toroidal algebras \cite{MRY,Sa}, and to Lie superalgebra (e.g. \cite{KSU}). 

In this paper, we address the problem of developing an analogue of the work of I. Frenkel and V. Kac for Yangians of simply laced Kac-Moody algebras. Yangians form an important family of quantum algebras which originate from physics, but were first properly defined in general by V. Drinfeld in \cite{Dr}.  They can be obtained from quantum loop algebras via a limit procedure \cite{Gu} and it turns out that Yangians and quantum loop algebras become isomorphic after passing to certain completions \cite{GTL1}. The first goal of this paper is to construct representations of Yangians, via their centrally extended doubles (see Definition \ref{DY:def}), using vertex operators which act on a tensor product of a Fock space with a twisted group algebra (see Theorem \ref{thm:Vrep}). In the case of the Yangian associated to $\mfsl_n$ and $\mfgl_n$, this was done in \cite{IoKo,Io,Kh}. It should be noted that our construction is not a direct consequence of the work of I. Frenkel and N. Jing \cite{FrJi,Ji2} on vertex operator representations of quantum affinizations associated to symmetric Kac-Moody algebras. Indeed, our construction differs in at least one essential way from the one in \cite{FrJi,Ji2}, namely that we use a different lattice to build the underlying Fock space. 

The second goal of this paper is to prove a version of the Poincar\'{e}-Birkhoff-Witt Theorem for affine Yangians of simply laced type (Theorem \ref{T:PBW}) using the vertex representations of Theorem \ref{thm:Vrep}. For Yangians associated to simple Lie algebras, this theorem was proved in general in \cite{Le}, and for classical Lie algebras, a version of the PBW Theorem stated in terms of the RTT-presentation of the Yangian can be found in \cite{Mo} and \cite{AMR}; for affine Yangians, only the type $A^{(1)}$ has been considered before \cite{Gu}.   A separate proof of the PBW property for simply laced affine Yangians has been announced in \cite{YaZh2}. The argument in \textit{loc. cit.}, which is of independent interest, uses the existence of a morphism from the Yangian of $\mfg$ to the reduced Drinfeld double of the spherical subalgebra of a shuffle algebra associated to $\mfg$ \cite[Cor.\ 3.4]{YaZh1}.

Our paper is structured as follows. Section \ref{Sec:Yg} presents the definition of the Yangian $Y(\mfg)$ associated to a symmetrizable Kac-Moody algebra $\mfg$ and describes its classical limit as the enveloping algebra of a certain Lie algebra $\mfs$ (Proposition \ref{P:s->Yh}) which coincides with the current algebra $\mfg[t]$ when $\mfg$ is finite-dimensional. We also recall results of Section 6 from \cite{GNW} about a certain parameter dependent coproduct (Theorem \ref{T:coprodu}) which will be needed in Subsection \ref{Ss:proofmain} in order to build a faithful representation of $Y(\mfg)$. It is possible to repeatedly apply this coproduct, but since it is not coassociative, one should proceed with care, as explained at the end of Section \ref{Sec:Yg}.

In Section \ref{Sec:DYg}, we give the definition of the centrally extended Yangian double $DY^{\msc}(\mfg)$ of $\mfg$ and study its basic properties. When $\mfg$ is a finite-dimensional simple Lie algebra, a definition of $DY^\msc(\mfg)$ was given over twenty year ago in \cite{Kh}, where it was conjecturally described as the Hopf algebra double of a central extension of $Y(\mfg)$. Although this interpretation seems to be limited to that setting, a general definition can be obtained by inserting an arbitrary Cartan matrix into the explicit definition of $DY^{\msc}(\mfg)$ provided in \cite{Kh,DiKh}. This procedure leads to Definition \ref{DY:def}. 

After giving the definition of $DY^\msc(\mfg)$ (Definition \ref{DY:def}, Lemma \ref{DY':def}), we relate its classical limit to the enveloping algebra of a certain Lie algebra $\mft$ (Proposition \ref{P:mftiso}), which in the finite-dimensional setting is just the affine Lie algebra $\mfg[t^{\pm 1}]\oplus \C K$ associated to $\mfg$. We conclude Section \ref{Sec:DYg} with Proposition \ref{P:iota}, which makes precise how the Yangian maps into the centrally extended Yangian double. 

The aforementioned Lie algebras $\mfs$ and $\mft$ can also be described more explicitly when $\mfg$ is an untwisted affine Lie algebra: in this case, they are isomorphic to the universal central extensions of two loop algebras. This fact was proved in \cite{MRY} and Section \ref{Sec:aff} serves to recall this description. In Proposition \ref{P:g'[t]}, we show that $\mfs$ and $\mft$ can be equivalently characterized as the universal central extensions of $\mfg^\prime[t]$ and $\mfg^\prime[t^{\pm 1}]$, respectively, where $\mfg^\prime=[\mfg,\mfg]$ is the derived subalgebra of $\mfg$. This description of $\mfs$ and $\mft$ is also valid when $\mfg$ is finite-dimensional. Our PBW Theorem for $Y(\mfg)$ (namely, Theorem \ref{T:PBW}) is stated as providing an isomorphism between the associated graded ring of $Y(\mfg)$ (for a certain filtration) and the enveloping algebra of $\mfs$, so the results of Section \ref{Sec:aff} are relevant for our second main theorem.

The main section of this paper is Section \ref{Sec:Ver}. Assuming that $\mfg$ is a simply laced Kac-Moody algebra, we construct a representation of the Yangian double $DY^{\msc}(\mfg)$ (and thus of the Yangian $Y(\mfg)$) which is given by vertex operators and which factors through the Yangian double at level one (see Theorem \ref{thm:Vrep} and also Proposition \ref{P:Vexp} and Corollary \ref{C:spec} for slightly different versions of that theorem). This representation can be realized in a space built from the tensor product of a Fock space with the twisted group algebra $\C_\veps[Q]$ of the root lattice $Q$: see Definition \ref{def:Ce[Q]} and \eqref{V}. Its construction generalizes, and has been motivated by, the results of Iohara \cite{Io} for $\mfg=\mfsl_N$, as well as the results of \cite{Kh} and \cite{IoKo} which were stated for $\mfg=\mfsl_2$ and $\mfg=\mfgl_2$, respectively. By considering carefully a certain filtration, our construction leads to a representation of the Lie algebra $\mft$ (Corollary \ref{C:rho_0}) which is related, but not always isomorphic, to the representation of $\mft$ obtained from the classical vertex representation construction \cite{FrKa,MRY}: this is made precise in Proposition \ref{P:V_A->V}. 

The last section contains a proof of the PBW Theorem for affine, simply laced Yangians: see Theorem \ref{T:PBW}. We prove that the associated graded ring of the Yangian $Y(\mfg)$ (for a certain filtration) is isomorphic to the enveloping algebra $U(\mfs)$ of $\mfs$. 
As a consequence, we obtain in Theorem \ref{T:flat} that the $\C[\hbar]$-algebra version of the Yangian $Y_\hbar(\mfg)$ (see Definition \ref{D:Y(g)}) is a flat deformation of the enveloping algebra $U(\mfs)$ of $\mfs$. The main point of the proof of Theorem \ref{T:PBW} is to show the injectivity of the natural epimorphism from $U(\mfs)$ to the associated graded ring given in Proposition \ref{P:weakPBW}: this is accomplished by taking tensor products of the vertex representation of $Y(\mfg)$ constructed in Section \ref{Sec:Ver} (actually, it is necessary to consider a slightly larger Kac-Moody algebra) and, by using a carefully chosen filtration, reducing the proof to the question of the faithfulness of the corresponding vertex representation of $\mfs$, which was addressed previously in \cite{MRY}. In Appendix \ref{app:A}, we prove that the collection of tensor powers of any faithful representation for an arbitrary complex Lie algebra separate points of its enveloping algebra. This will be applied to the Lie algebra $\mathfrak{s}$ to prove Theorem \ref{T:PBW}.

\subsection*{Acknowledgements} The authors thank Yaping Yang and Gufang Zhao for sharing a preliminary version of their proof of the Poincar\'{e}-Birkhoff-Witt Theorem for simply-laced affine Yangians using the shuffle algebra approach. They are very thankful to one referee for a very careful reading of their manuscript. The first and third named authors gratefully acknowledge the financial support of the Natural Sciences and Engineering Research Council of Canada provided via the Discovery Grant Program and the Alexander Graham Bell Canada Graduate Scholarships - Doctoral Program, respectively. The second named author acknowledges the financial support of the European Social Fund via grant number 09.3.3-LMT-K-712-02-0017.

%%%%%%%%%%%%%%%%%%%%%%%%%%%%%%%%%%%%%%%%%%%%%%%%%%%%%%%%%%%%%%%%%%%%%%%%
%%%%%%%%%%%%%%%%%%%%%%%%%%%%%%%%%%%%%%%%%%%%%%%%%%%%%%%%%%%%%%%%%%%%%%%%
% Section: Yangians
%%%%%%%%%%%%%%%%%%%%%%%%%%%%%%%%%%%%%%%%%%%%%%%%%%%%%%%%%%%%%%%%%%%%%%%%
%%%%%%%%%%%%%%%%%%%%%%%%%%%%%%%%%%%%%%%%%%%%%%%%%%%%%%%%%%%%%%%%%%%%%%%%
\section{The Yangian of \texorpdfstring{$\mfg$}{g}}\label{Sec:Yg}

In this section we recall the definition of the Yangian and give some of its basic properties.
Let $\mfg$ be a symmetrizable Kac-Moody algebra associated to an indecomposable Cartan matrix $\mbA=(a_{ij})_{i,j\in I}$, where $I$ is an indexing set for the simple roots of $\mathfrak{g}$. We assume that $\mbA$ satisfies the condition  
\begin{equation}\label{A-cond}
 \min\{|a_{ij}|,|a_{ji}|\}\leq 1 \quad \forall \; i,j\in I\; \text{ with }\; i\neq j. 
\end{equation}
Though the constraint given by \eqref{A-cond} will not play a role until Section \ref{Sec:aff}, the results of \cite{Ji2,Na}, together with Lemma \ref{L:min} and  Remark \ref{R:arb}, suggest that the definition of 
the Yangian (and its centrally extended double) must be modified in order to extend the vertex representation construction of Section \ref{Sec:Ver} beyond the simply-laced case. 
Let $(\,,\,)$ be a fixed non-degenerate invariant symmetric bilinear form on $\mfg$. We denote by $\{\alpha_i\}_{i\in I}$ the set of simple positive roots. Set  \[d_{ij}=\tfrac{1}{2}(\al_i,\al_j)\quad \forall \; i,j\in I.\]

%%%%%%%%%%%%%%%%%%%%%%%%%%%%%%%%%%%%%%%%%%%%%%%%%%%%%%%%%%%%%%%%%%%%%%%%
% Subsection: Definition of the Yangian
%%%%%%%%%%%%%%%%%%%%%%%%%%%%%%%%%%%%%%%%%%%%%%%%%%%%%%%%%%%%%%%%%%%%%%%%
\subsection{Definition of the Yangian}\label{ssec:defY}
\begin{definition}\label{D:Y(g)}
The Yangian $Y_\hbar(\mfg)$ is the unital associative $\C[\hbar]$-algebra generated by the elements $x^\pm_{ir}$, $h_{ir}$, for $i\in I$ and $r\in\Z_{\geq0}$, subject to the relations
\begin{gather}
  [h_{i r}, h_{j s}] = 0, \label{eq:relHH}\\
  [h_{i 0}, x^\pm_{j s} ] = \pm 2d_{ij} x^\pm_{j s},
  \label{eq:relHX}
\\
  [x^+_{i r}, x_{j s}^-] = \delta_{ij} h_{i, r+s}, \label{eq:relXX}
\\
  [h_{i, r+1}, x^\pm_{j s}] - [h_{i r}, x^\pm_{j, s+1}] =
  \pm\hbar d_{ij}\left(
    h_{i r} x^\pm_{j s} + x^\pm_{j s} h_{i r}
  \right), \label{eq:relexHX}
\\
  [x^\pm_{i, r+1}, x^\pm_{j s}] - [x^\pm_{i r}, x^\pm_{j, s+1}] = 
  \pm\hbar d_{ij} \left(
    x^\pm_{i r} x^\pm_{j s} + x^\pm_{j s} x^\pm_{i r}\right),
   \label{eq:relexXX}
\\
  \sum_{\sigma\in S_{m}}
   [x^\pm_{i r_{\sigma(1)}}, [ x^\pm_{i r_{\sigma(2)}}, \cdots,
       [x^\pm_{i, r_{\sigma(m)}}, x^\pm_{j s}] \cdots ]]
   = 0
   \quad \text{for $i\neq j$ and  $m=1-a_{ij}$}.
 \label{eq:relDS}
\end{gather}
In the last relation, $S_m$ denotes the symmetric group.
\end{definition}
\begin{remark}
 In the notation of \cite{GNW}, the above algebra is equal to $Y_\hbar(\mfg^\prime)$, where $\mfg^\prime$ is the derived subalgebra $[\mfg,\mfg]$. For the definition of the full Yangian, see  Definition 2.1 of \cite{GNW}. For $\mbA$ of finite or affine type, the condition \eqref{A-cond} only excludes type $A_1^{(1)}$. In the latter case, the appropriate definition of the Yangian is given in \cite[\S 1.2]{BeTs} and \cite[Def.\ 5.1]{Ko}. 
\end{remark}
Note that $Y_\hbar(\mfg)$ is generated, as a $\C[\hbar]$-algebra, by $x^\pm_{ir}$, $h_{ir}$, for $i\in I$ and $0\leq r\leq 1$: see \cite[(2.10)]{GNW}.
We also observe that $Y_\hbar(\mfg)$ is equipped with a $\Z_{\geq 0}$-grading determined by 
\begin{equation*}
\deg \hbar=1  \quad \text{ and }\quad \deg x_{ir}^\pm=\deg h_{ir}=r \quad \forall \; i\in I,r\geq 0.
\end{equation*}

We now give an equivalent definition of $Y_\hbar(\mfg)$ in terms of generating series which will prove useful in Section \ref{Sec:DYg}. The following result is a translation of \cite[Prop.\ 2.3]{GTL2}.  
\begin{proposition}[Prop.\ 2.3 of \cite{GTL2}]\label{P:Y-op}
Let $x_i^\pm(z)=\sum_{r\geq 0}x_{ir}^\pm z^{-r-1}$ and $h_i(z)=\sum_{r\geq 0}h_{ir}z^{-r-1}$ for each $i\in I$. The defining relations of $Y_\hbar(\mfg)$ are equivalent to
\begin{gather}
 h_i(z)h_j(w)=h_j(w)h_i(z), \label{Y:hh}\\
 (z-w\mp \hbar d_{ij})h_i(z)x_j^\pm(w)=(z-w\pm \hbar d_{ij})x_j^\pm(w)h_i(z)\pm 2d_{ij} x_j^\pm (w)-[h_i(z),x_{j0}^\pm], \label{Y:xh}\\
 (z-w\mp \hbar d_{ij})x_i^\pm(z)x_j^\pm(w)=(z-w\pm \hbar d_{ij})x_j^\pm(w)x_i^\pm(z)+[x_{i0}^\pm,x_j^\pm(w)]-[x_i^\pm(z),x_{j0}^\pm],  \label{Y:xx}\\
 (z-w)[x_i^+(z),x_j^-(w)]=\delta_{ij}(h_i(w)-h_i(z)), \label{Y:xxh}\\
 \sum_{\sigma \in S_{m}} [x_i^{\pm}(z_{\sigma(1)}), [x_i^{\pm}(z_{\sigma(2)}), \cdots, [x_i^{\pm}(z_{\sigma(m)}),x_j^{\pm}(w)] \cdots]] = 0, \label{Y:serre}
\end{gather}
where in the last relation $i\neq j$ and $m=1-a_{ij}$.
\end{proposition}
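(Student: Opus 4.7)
The plan is to establish the equivalence directly by matching the generator-form relations of Definition \ref{D:Y(g)} with the series-form relations in the statement, via extraction of coefficients of monomials in $z$ and $w$. Going from generators to series, one multiplies each generator relation by the monomial $z^{-r-1}w^{-s-1}$ (or its multivariable analogue in the Serre case) and sums over the free indices $r,s\geq 0$; going from series to generators, one extracts coefficients of the same monomials.

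The cases \eqref{Y:hh} and \eqref{Y:serre} are immediate translations of \eqref{eq:relHH} and \eqref{eq:relDS}, since no shifts of indices or boundary corrections are needed. For \eqref{Y:xxh}, summing \eqref{eq:relXX} weighted by $z^{-r-1}w^{-s-1}$ gives $[x_i^+(z), x_j^-(w)] = \delta_{ij}\sum_{r,s\geq 0} h_{i,r+s}z^{-r-1}w^{-s-1}$; multiplication by $(z-w)$ telescopes the right-hand side to $\delta_{ij}(h_i(w) - h_i(z))$, and the converse direction follows by extracting coefficients of $z^{-r-1}w^{-s-1}$.

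The substantive cases are \eqref{Y:xh} and \eqref{Y:xx}. For \eqref{Y:xh}, I would first use the index-shift identity
\[
\sum_{r,s\geq 0} [h_{i,r+1}, x^\pm_{js}]\, z^{-r-1}w^{-s-1} = z\,[h_i(z), x_j^\pm(w)] - [h_{i0}, x_j^\pm(w)],
\]
together with its analogue in $w$, to rewrite the series form of \eqref{eq:relexHX} as
\[
(z-w)[h_i(z), x_j^\pm(w)] - [h_{i0}, x_j^\pm(w)] + [h_i(z), x_{j0}^\pm] = \pm \hbar d_{ij}\bigl(h_i(z)x_j^\pm(w) + x_j^\pm(w)h_i(z)\bigr).
\]
Substituting $[h_{i0}, x_j^\pm(w)] = \pm 2 d_{ij}\, x_j^\pm(w)$ from \eqref{eq:relHX} and rearranging terms yields \eqref{Y:xh}. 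Conversely, extracting the coefficient of $z^{-r-1}w^{-s-1}$ for $r,s\geq 0$ from \eqref{Y:xh} returns \eqref{eq:relexHX}, while the coefficient of $z^0 w^{-s-1}$ recovers \eqref{eq:relHX}; the presence of the boundary term $-[h_i(z), x_{j0}^\pm]$ on the right-hand side of \eqref{Y:xh} is precisely what makes this extraction consistent. The argument for \eqref{Y:xx} is entirely parallel, with $[x_{i0}^\pm, x_j^\pm(w)] - [x_i^\pm(z), x_{j0}^\pm]$ acting as the corresponding boundary correction.

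Since the proof amounts to careful but routine bookkeeping, there is no serious conceptual obstacle; the only point requiring attention is keeping track of the boundary terms that arise from reindexing operations of the form $\sum_{r\geq 0} a_{r+1}\, z^{-r-1} = z\sum_{r\geq 0} a_r z^{-r-1} - a_0$, which are responsible for the commutator and linear terms appearing on the right-hand sides of \eqref{Y:xh} and \eqref{Y:xx} and for the encoding of \eqref{eq:relHX} within \eqref{Y:xh}.
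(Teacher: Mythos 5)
The paper does not actually prove this proposition; it states that the result ``is a translation of Prop.~2.3 of [GTL2]'' and relies on that reference. Your proof fills in the omitted details by a direct coefficient-matching argument, which is the natural and presumably the intended verification.

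Your argument is correct. The key reindexing identity $\sum_{r,s\geq 0}[h_{i,r+1},x^\pm_{js}]z^{-r-1}w^{-s-1}=z[h_i(z),x_j^\pm(w)]-[h_{i0},x_j^\pm(w)]$ and its companion in $w$ are right, as is the substitution of \eqref{eq:relHX} to eliminate the boundary term $[h_{i0},x_j^\pm(w)]$. The converse direction also works as you outline: extracting the $z^{-r-1}w^{-s-1}$ coefficients (for $r,s\geq 0$) from \eqref{Y:xh} returns \eqref{eq:relexHX} directly, and the $z^{0}w^{-s-1}$ coefficients return \eqref{eq:relHX}. Two small points worth tightening. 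First, in the $xx$ case, no analogue of \eqref{eq:relHX} is available to simplify the boundary terms, so $[x_{i0}^\pm,x_j^\pm(w)]-[x_i^\pm(z),x_{j0}^\pm]$ remains on the right-hand side; the argument is a variant of, rather than strictly ``entirely parallel'' to, the $xh$ case. Second, for the converse of \eqref{Y:xxh}, extracting only the $z^{-r-1}w^{-s-1}$ coefficients with $r,s\geq 0$ yields the relation $[x_{i,r+1}^+,x_{js}^-]=[x_{ir}^+,x_{j,s+1}^-]$ (since the right-hand side has no such monomials); you must additionally read off the $z^{0}w^{-s-1}$ coefficients to obtain $[x_{i0}^+,x_{js}^-]=\delta_{ij}h_{is}$ and then telescope to get $[x_{ir}^+,x_{js}^-]=\delta_{ij}h_{i,r+s}$. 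Neither issue affects the validity of the proof.
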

Multiplying the relation \eqref{Y:xxh} by $z^{-1}$ and taking the residue at $z=0$ yields 
\begin{equation}\label{Y:xxh0}
 [x_{i0}^+,x_j^-(w)]=\delta_{ij} h_i(w) \quad \forall \; i,j\in I. 
\end{equation}
Conversely, we have the following: 
\begin{proposition}[Prop.\ 3.3(3) of \cite{AG}]\label{P:Yxxh}
 The relation \eqref{Y:xxh} is a consequence of \eqref{Y:hh}, \eqref{Y:xh} and \eqref{Y:xxh0}. 
\end{proposition}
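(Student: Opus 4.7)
The plan is to prove \eqref{Y:xxh} by establishing the equivalent family of identities
$[x^+_{i,r}, x^-_{j,s}] = \delta_{ij}\, h_{i,r+s}$
for all $i,j \in I$ and $r,s \in \Z_{\geq 0}$, obtained by comparing coefficients of $z^{-r}w^{-s-1}$. The case $r = 0$ is precisely the content of \eqref{Y:xxh0}, so I will induct on $r$.

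The central technical step is to derive a recursive expression for $x^+_{i,r+1}$. Taking $j = i$, the $+$ sign, and parameters $(r,s) \mapsto (0, r)$ in \eqref{eq:relexHX} (equivalently, reading off the appropriate coefficient of \eqref{Y:xh}), and using $[h_{i,0}, x^+_{i,r+1}] = 2 d_{ii}\, x^+_{i,r+1}$, I expect to obtain
\[
 2 d_{ii}\, x^+_{i,r+1} \;=\; [h_{i,1}, x^+_{i,r}] \;-\; \hbar d_{ii}\bigl(h_{i,0}\, x^+_{i,r} + x^+_{i,r}\, h_{i,0}\bigr),
\]
which is solvable for $x^+_{i,r+1}$ because $d_{ii} = \tfrac{1}{2}(\alpha_i,\alpha_i) \neq 0$.

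Bracketing both sides with $x^-_{j,s}$ and applying the Jacobi identity splits $2 d_{ii}\,[x^+_{i,r+1}, x^-_{j,s}]$ into three pieces. The first, $[h_{i,1}, [x^+_{i,r}, x^-_{j,s}]]$, vanishes by the induction hypothesis (reducing the inner bracket to $\delta_{ij}\, h_{i,r+s}$) combined with \eqref{Y:hh}. For the second, $-[x^+_{i,r}, [h_{i,1}, x^-_{j,s}]]$, I will apply the analogous raising formula (from \eqref{eq:relexHX} with the $-$ sign), namely $[h_{i,1}, x^-_{j,s}] = -2 d_{ij}\, x^-_{j,s+1} - \hbar d_{ij}(h_{i,0}\, x^-_{j,s} + x^-_{j,s}\, h_{i,0})$, and then evaluate the outer bracket via the induction hypothesis and \eqref{Y:xxh0}. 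The third, $-\hbar d_{ii}\,[h_{i,0} x^+_{i,r} + x^+_{i,r} h_{i,0}, x^-_{j,s}]$, expands through the Leibniz rule using $[h_{i,0}, x^-_{j,s}] = -2 d_{ij}\, x^-_{j,s}$ together with the induction hypothesis. Collecting all terms, the anticommutator contributions of the form $x^+_{i,r} x^-_{j,s} + x^-_{j,s} x^+_{i,r}$ and $h_{i,0}\, h_{i,r+s} + h_{i,r+s}\, h_{i,0}$ should pair off, leaving exactly $2 d_{ii}\, \delta_{ij}\, h_{i,r+s+1}$, completing the induction.

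The main obstacle is the careful bookkeeping of the $\hbar$-order anticommutator terms produced by \eqref{Y:xh}. The decisive observation that forces the cancellations is the coincidence $d_{ij} = d_{ii}$ whenever $i = j$: this annihilates the cross-coefficient $d_{ij} - d_{ii}$ appearing in front of all surviving $\delta_{ij}$-weighted anticommutators, while for $i \neq j$ the $\delta_{ij}$-weighted pieces vanish outright and the remaining $d_{ij}$-weighted anticommutators recombine into $[x^+_{i,r}, x^-_{j,s}]$, which is zero by induction.
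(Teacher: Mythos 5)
Your proof proposal is correct. Note that the paper gives no proof of this proposition at all — it simply cites Prop.\ 3.3(3) of the reference \cite{AG} — so there is no in-paper argument to compare against; what follows is therefore only an assessment of your proposal on its own terms.

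Your reduction of \eqref{Y:xxh} to the family $[x^+_{ir}, x^-_{js}] = \delta_{ij}\, h_{i,r+s}$ is the right starting point: matching coefficients of $z^{-p-1}w^{-q-1}$ in \eqref{Y:xxh} gives $[x^+_{i,p+1},x^-_{jq}] = [x^+_{ip},x^-_{j,q+1}]$, which together with the $r=0$ case \eqref{Y:xxh0} is equivalent to the stated family. Both $[h_{i0},x^\pm_{js}]=\pm 2d_{ij}x^\pm_{js}$ and \eqref{eq:relexHX} are extracted from \eqref{Y:xh} alone (by the residue at $z=0$ and by the $z^{-p-1}w^{-q-1}$ coefficient for $p,q\ge 0$, respectively), so your raising formula $2d_{ii}\,x^+_{i,r+1} = [h_{i1},x^+_{ir}] - \hbar d_{ii}(h_{i0}x^+_{ir}+x^+_{ir}h_{i0})$ only uses the allowed inputs, and $d_{ii}\ne 0$ holds for any symmetrizable Kac--Moody algebra. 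Carrying out the Jacobi/Leibniz expansion you describe and writing $A = x^+_{ir}x^-_{js}+x^-_{js}x^+_{ir}$, $B = h_{i0}h_{i,r+s}+h_{i,r+s}h_{i0}$, the $\hbar$-terms collect into
\begin{equation*}
 \hbar\bigl(-2d_{ij}d_{ii}+2d_{ii}d_{ij}\bigr)A + \hbar\,\delta_{ij}\bigl(d_{ij}-d_{ii}\bigr)B \;=\; 0,
\end{equation*}
the $A$-coefficient vanishing identically and the $B$-coefficient vanishing because $\delta_{ij}(d_{ij}-d_{ii})=0$, leaving exactly $2d_{ii}\,\delta_{ij}\,h_{i,r+s+1}$ as you claimed. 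One small imprecision in your last paragraph: for $i\neq j$ the $d_{ij}$-weighted anticommutator terms cancel directly with opposite signs; they do not ``recombine into $[x^+_{ir},x^-_{js}]$''. That rewording doesn't affect the validity of the argument.
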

\begin{NB}
\begin{proof} We include a brief proof different from that given in \cite{AG}. 

 Multiplying \eqref{Y:xh} by $z^{-1}$ and taking the residue at $z=0$ gives 
 \begin{equation}\label{[h_i,x_j(w)]}
  [h_{i0},x_j^\pm(w)]=\pm 2 d_{ij} x_j^\pm (w). 
 \end{equation}
 Multiplying \eqref{Y:xh} instead by $w^{-2}$ and taking the residue at $w=0$ gives 
 \begin{align*}
  [h_{i1},x_j^\pm(w)]&=w[h_{i0},x_j^\pm(w)]\pm \hbar d_{ij}(h_{i0}x_j^\pm(w)+x_j^\pm(w)h_{i0})-[h_{i0},x_{j0}^\pm]\\
                     &=\pm 2d_{ij}(wx_j^\pm (w)-x_{j0}^\pm)+\tfrac{\hbar}{2}[h_{i0}^2,x_j^\pm(w)], 
 \end{align*}
 where we have employed \eqref{[h_i,x_j(w)]} to obtain the second equality. We therefore have
 \begin{equation}
  [\tilde h_{i1},x_j^\pm(w)]=\pm 2d_{ij}(wx_j^\pm (w)-x_{j0}^\pm) \quad \forall \; i,j\in I, \label{[h_1,x_j(w)]}
 \end{equation}
 where $\tilde h_{i1}=h_{i1}-\tfrac{\hbar}{2}h_{i0}^2$. Arguing by induction on $r\geq 0$, with \eqref{[h_i,x_j(w)]} as the base case, we obtain 
 \begin{equation*}
  [x_{ir}^+,x_j^-(w)]=\delta_{ij}\left(w^r h_i(w)-\sum_{k=0}^{r-1} w^{r-1-k}h_{ik}\right)\quad \forall \; i,j\in I,
 \end{equation*}
 where the inductive step is proven by applying $\mathrm{ad}(\tilde h_{i1})$ and using \eqref{Y:hh}. Multiplying this identity by $(z-w)z^{-r-1}$ and taking the sum over $r\geq 0$ gives \eqref{Y:xxh}. 
\end{proof}
\end{NB}

For each $\zeta\in \C$, let $Y_\zeta(\mfg)$ be the $\C$-algebra generated by $\{x_{ir}^\pm, h_{ir}\}_{i\in I, r\geq 0}$ subject to the defining relations 
of Definition \ref{D:Y(g)} with $\hbar$ replaced by  $\zeta$. Equivalently, 
\begin{equation*}
Y_\zeta(\mfg)=Y_\hbar(\mfg)/(\hbar-\zeta)Y_\hbar(\mfg).
\end{equation*}
For the remainder of this paper our focus will primarily be on the Yangian $Y(\mfg)=Y_1(\mfg)$. The emphasis on the single choice $\zeta=1$ is justified by the fact that the assignment 
\begin{equation}
 x_{ir}^\pm, h_{ir} \in Y(\mfg) \mapsto \zeta^{-r}x_{ir}^\pm, \zeta^{-r}h_{ir} \in Y_\zeta(\mfg) \label{zeta-iso}
\end{equation}
extends to an isomorphism of algebras $Y(\mfg)\to Y_\zeta(\mfg)$ for each fixed $\zeta\in \C^\times$. 
Note that $Y(\mfg)$ is no longer a $\Z_{\geq 0}$-graded algebra, but rather a $\Z_{\ge 0}$-filtered algebra with ascending filtration $\{\mbF_k\}_{k\geq 0}$ determined by assigning filtration degrees $r$ to $x_{ir}^\pm$ and $h_{ir}$ for each $i\in I$ and $r\geq 0$.  

%%%%%%%%%%%%%%%%%%%%%%%%%%%%%%%%%%%%%%%%%%%%%%%%%%%%%%%%%%%%%%%%%%%%%%%%
% Subsection: The classical limit
%%%%%%%%%%%%%%%%%%%%%%%%%%%%%%%%%%%%%%%%%%%%%%%%%%%%%%%%%%%%%%%%%%%%%%%%
\subsection{The classical limit}\label{ssec:clY}

\begin{definition}\label{D:s}
Let $\mfs$ be the Lie algebra generated by $\{X_{ir}^\pm, H_{ir}\}_{i\in I,r\geq 0}$ subject to the defining relations
\begin{gather}
  [H_{ir},H_{js}]=0, \label{s-HH}\\
  [H_{ir},X_{js}^\pm]=\pm 2d_{ij}X_{j,r+s}^\pm, \label{s-HX}\\
  [X_{ir}^+,X_{js}^-]=\delta_{ij}H_{i,r+s}, \label{s-X+X-}\\
  [X_{i,r+1}^\pm,X_{js}^\pm]=[X_{ir}^\pm,X_{j,s+1}^\pm], \label{s-xx}\\
  \mathrm{ad} (X_{i0}^\pm)^{1-a_{ij}}(X_{jr}^\pm)=0 \quad \text{ for }\; i\neq j. \label{s-serre}
\end{gather}
%
%for all $i,j\in I$ and $r,s\geq 0$. 
\end{definition}
Note that $\mfs$ is a $\Z_{\geq 0}$-graded Lie algebra with $\deg X_{ir}^\pm=\deg H_{ir}=r$ for all $i\in I$ and $r\geq 0$. 

In addition, $\mfs$ is always an extension of the current algebra $\mfg^\prime[t]$. Indeed, if $\{x_i^\pm,h_i\}_{i\in I}$ denote the Chevalley generators of $\mfg^\prime$, normalized so that $(x_i^+,x_i^-)=1$ and $h_i=[x_i^+,x_i^-]$, then the assignment 
\begin{equation}\label{s->g[t]}
 X_{ir}^\pm \mapsto x_i^\pm\ot t^r,\; H_{ir} \mapsto h_i\ot t^r \quad \forall \; i\in I \;\text{ and }\; r\geq 0
\end{equation}
determines a surjective Lie algebra morphism $\mfs\onto \mfg^\prime[t]$. This is an isomorphism when $\mfg$ is finite-dimensional, which can be proved using the arguments in Section 3 of \cite{MRY}, but in general this is not the case. We will consider the situation where $\mfg$ is of affine type in more detail in Section \ref{Sec:aff}. 

The next proposition illustrates that $Y_\hbar(\mfg)$ is a graded deformation of the enveloping algebra $U(\mfs)$. 
\begin{proposition}\label{P:s->Yh}
 The assignment 
 \begin{equation}\label{s->Y}
 X_{ir}^\pm \mapsto x_{ir}^\pm, \; H_{ir}\mapsto h_{ir}\quad \forall \; i\in I \;\text{ and }\; r\geq 0
 \end{equation}
 extends to an isomorphism of graded $\C$-algebras $U(\mfs)\stackrel{\sim}{\rightarrow} Y_0(\mfg)$. 
\end{proposition}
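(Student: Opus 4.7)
The plan is to construct mutually inverse $\C$-algebra homomorphisms $\phi\colon U(\mfs)\to Y_0(\mfg)$ extending the assignment \eqref{s->Y} and $\psi\colon Y_0(\mfg)\to U(\mfs)$ sending $x^\pm_{ir},h_{ir}\mapsto X^\pm_{ir},H_{ir}$; once both exist, the two compositions act as the identity on the generating sets and hence on the respective algebras. Since each assignment sends every generator to a generator of the same degree, the resulting isomorphism will automatically respect the two $\Z_{\geq 0}$-gradings.

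For $\phi$, by the universal property of $U(\mfs)$ it suffices to verify that relations \eqref{s-HH}--\eqref{s-serre} hold among $\{x^\pm_{ir},h_{ir}\}$ in $Y_0(\mfg)$. Relations \eqref{s-HH}, \eqref{s-X+X-}, and \eqref{s-xx} are immediate consequences of the $\hbar=0$ specializations of \eqref{eq:relHH}, \eqref{eq:relXX}, and \eqref{eq:relexXX}, respectively. Relation \eqref{s-serre} follows from \eqref{eq:relDS} upon setting $r_1=\cdots=r_m=0$ and $s=r$: every permutation then yields the same iterated bracket, so the Drinfeld-Serre sum collapses to $m!\cdot\mathrm{ad}(x^\pm_{i0})^{1-a_{ij}}(x^\pm_{jr})=0$. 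The remaining relation \eqref{s-HX} is handled by induction on $r$, the base case $r=0$ being \eqref{eq:relHX} and the inductive step being the identity $[h_{i,r+1},x^\pm_{js}]=[h_{ir},x^\pm_{j,s+1}]$ extracted from \eqref{eq:relexHX} at $\hbar=0$.

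For $\psi$, one checks directly that \eqref{eq:relHH}, \eqref{eq:relHX}, \eqref{eq:relXX}, and the $\hbar=0$ instance of \eqref{eq:relexXX} are restatements of \eqref{s-HH}, the $r=0$ case of \eqref{s-HX}, \eqref{s-X+X-}, and \eqref{s-xx}; the $\hbar=0$ instance of \eqref{eq:relexHX} becomes $[H_{i,r+1},X^\pm_{js}]=[H_{ir},X^\pm_{j,s+1}]$, and both sides equal $\pm 2d_{ij}X^\pm_{j,r+s+1}$ by \eqref{s-HX}. The substantive point, and the main obstacle of the proof, is to deduce the full symmetric Drinfeld-Serre relation \eqref{eq:relDS} in $U(\mfs)$ from the specialized form \eqref{s-serre} together with the shift relation \eqref{s-xx}. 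The plan here is an induction on the total degree $N=r_1+\cdots+r_m+s$: the base case $N=0$ collapses the symmetric sum into $m!\cdot\mathrm{ad}(X^\pm_{i0})^m(X^\pm_{j0})=0$ by \eqref{s-serre}, while the inductive step uses \eqref{s-xx} together with the Jacobi identity to transfer a unit of degree from any entry inside the nested bracket to a neighbouring one, producing correction terms of strictly smaller total degree to which the inductive hypothesis applies.
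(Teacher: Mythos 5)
Your $\phi$-direction matches the paper and is fine, and your overall strategy (construct both $\phi$ and $\psi$ via universal properties) is the same as the paper's, which passes through the auxiliary Lie algebra $\mfs'$. The gap is in the $\psi$-direction, precisely at the point you yourself flag as "the substantive point": deducing the full symmetric relation \eqref{eq:relDS} from \eqref{s-serre} and \eqref{s-xx}.

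Your proposed induction on the total degree $N=r_1+\cdots+r_m+s$ cannot work as stated. Every maneuver available to you --- the shift relation \eqref{s-xx} and the Jacobi identity --- preserves the total degree of a nested bracket. Shifting a unit of degree from one slot to a neighbouring slot is, by definition, degree-preserving, and the Jacobi correction $[[X^\pm_{ia},X^\pm_{ib}],W]$ carries the same total degree as the two expressions it relates. So there is no mechanism that produces "correction terms of strictly smaller total degree," and the inductive step collapses. You also only invoke \eqref{s-serre} at $r=0$ (in the base case), whereas the relation is available for all $r$, and using that generality is essential.

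The argument the paper actually runs is the following, and you will need some version of it. First, from \eqref{s-xx} and induction one gets $[X^\pm_{i,r+k},X^\pm_{js}]=[X^\pm_{ir},X^\pm_{j,s+k}]$ for all $r,s,k\geq 0$ and all $i,j$. Applying this with $j=i$ shows $[X^\pm_{ia},X^\pm_{ib}]$ depends only on $a+b$; combined with antisymmetry over a field of characteristic zero, this forces $[X^\pm_{ia},X^\pm_{ib}]=0$, so the operators $\mathrm{ad}(X^\pm_{ia})$, $a\geq 0$, pairwise commute. Consequently any summand $\mathrm{ad}(X^\pm_{i,r_{\sigma(1)}})\cdots\mathrm{ad}(X^\pm_{i,r_{\sigma(m)}})(X^\pm_{js})$ can be reorganized, repeatedly moving the outermost $\mathrm{ad}$ inward and pushing its degree onto the target via \eqref{s-xx}, into $\mathrm{ad}(X^\pm_{i0})^m(X^\pm_{j,s+r_1+\cdots+r_m})$; this vanishes by \eqref{s-serre} applied with $r=s+\sum r_a$. (The paper phrases this via an identity \eqref{Serre:id} and an application of $\mathrm{ad}(H_{ik})$, but the underlying commutativity is the hinge.) The upshot is that the collapse of the Serre sum is uniform in $N$, not inductive in $N$.
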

\begin{proof}
Since the defining relations of $Y_0(\mfg)$ are of Lie type, it is isomorphic to $U(\mfs^\prime)$, where $\mfs^\prime$ is the Lie algebra generated by $\{x_{ir}^\pm, h_{ir}\}_{i\in I,r\geq 0}$ subject to the defining relations \eqref{s-HH}, \eqref{s-X+X-}, \eqref{s-xx}, in addition to the three relations 
 \begin{gather}
  [h_{i 0}, x^\pm_{j s} ] = \pm 2d_{ij} x^\pm_{j s}, \quad [h_{i, r+1}, x^\pm_{j s}]=[h_{i r}, x^\pm_{j, s+1}], \label{s-HX'}\\
  \sum_{\sigma\in S_m}
   [x^\pm_{i r_{\sigma(1)}}, [x^\pm_{i r_{\sigma(2)}}, \cdots,
       [x^\pm_{i, r_{\sigma(m)}}, x^\pm_{j s}] \cdots ]]
   = 0
   \quad \text{for $i\neq j$ and  $m=1-a_{ij}$}. \label{s-Serre'}
 \end{gather}
\eqref{s->Y} extends to an epimorphism of algebras $U(\mfs)\onto Y_0(\mfg)$ since \eqref{s-HX} follows from \eqref{s-HX'}.  To conclude that the assignment $x_{ir}^\pm\mapsto X_{ir}^\pm, \, h_{ir}\mapsto H_{ir}$
 extends to a homomorphism $Y_0(\mfg)\to U(\mfs)$ which is the inverse of the homomorphism $U(\mfs)\to Y_0(\mfg)$ defined by \eqref{s->Y}, it suffices to show that the relations of Definition \ref{D:s} imply \eqref{s-HX'} and \eqref{s-Serre'}. 
 
 Since \eqref{s-HX} implies \eqref{s-HX'}, we are left to deduce \eqref{s-Serre'} from Definition \ref{D:s}. We will prove the stronger result 
 \begin{equation}\label{s-Serre''}
  \left[X_{i,r_1}^\pm,\left[X_{i,r_2}^\pm,\ldots,\left[X_{i,r_{m}}^\pm,X_{js}^\pm\right]\cdots\right]\right]=0 
 \end{equation}
 for all $s,r_1,\ldots,r_m\geq 0$ and $i\neq j \in I$. 
 
 From \eqref{s-xx} and induction we obtain 
  \begin{equation}
  [X_{i,r+k}^\pm,X_{js}^\pm]=[X_{ir}^\pm,X_{j,s+k}^\pm]\quad \forall\; r,s,k\geq 0  \;\text{ and }\; i,j\in I. 
 \end{equation}
 This implies that for any fixed $n\geq 0$ and $k,s,r_1,\ldots,r_{n}\in \Z_{\ge 0}$, we have 
 \begin{gather*}
 \begin{aligned}
  \mathrm{ad}(X_{i,r_1+k}^\pm)\mathrm{ad}(X_{i,r_2}^\pm)\cdots &\mathrm{ad}(X_{i,r_n}^\pm)(X_{j,s}^\pm)\\&= \mathrm{ad}(X_{i,r_1}^\pm)\mathrm{ad}(X_{i,r_2}^\pm)\cdots \mathrm{ad}(X_{i,r_n}^\pm)(X_{j,s+k}^\pm).
 \end{aligned}
 \end{gather*}
After combining this with \eqref{s-serre}, relation \eqref{s-Serre''} becomes an immediate consequence.  \qedhere 
\end{proof}
\begin{remark}\label{R:flat}
When $\mfg$ is finite-dimensional, it is known that $Y_\hbar(\mfg)$ is a flat deformation of $U(\mfs)$. We will prove the analogous result for $\mfg$ of simply laced affine type in Theorem \ref{T:flat}.
\end{remark}
Recall the filtration $\{\mbF_k\}_{k\geq 0}$ on $Y(\mfg)$ defined at the end of Subsection \ref{ssec:defY}. Let $\bar{x}_{ir}^\pm$ and $\bar{h}_{ir}$ denote the images of $x_{ir}^\pm$ and $h_{ir}$ in $\mbF_r/\mbF_{r-1}\subset \gr Y(\mfg)$, where $\mbF_{-1}=\{ 0 \}$. 
The following result is immediate from the defining relations of $Y(\mfg)$. 
\begin{proposition}\label{P:weakPBW}
 The assignment 
 \begin{equation*}
  X_{ir}^\pm \mapsto \bar x_{ir}^\pm, \: H_{ir}\mapsto \bar h_{ir}\quad \forall\; i\in I\; \text{ and }\; r\geq 0
 \end{equation*}
 extends to an epimorphism of graded $\C$-algebras $\phi:U(\mfs)\onto \gr Y(\mfg)$. 
\end{proposition}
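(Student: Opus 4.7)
The plan is straightforward: verify directly that the elements $\bar x_{ir}^\pm, \bar h_{ir} \in \gr Y(\mfg)$ satisfy the defining relations \eqref{s-HH}--\eqref{s-serre} of $\mfs$. This produces a Lie algebra homomorphism $\mfs \to \gr Y(\mfg)$ which, by the universal property of the enveloping algebra, extends to the desired morphism $\phi : U(\mfs) \to \gr Y(\mfg)$. Surjectivity is automatic: the generators $x_{ir}^\pm, h_{ir}$ of $Y(\mfg)$ have principal symbols $\bar x_{ir}^\pm, \bar h_{ir}$, so these already generate $\gr Y(\mfg)$ as an algebra.

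For the relations themselves, \eqref{s-HH} and \eqref{s-X+X-} follow verbatim from \eqref{eq:relHH} and \eqref{eq:relXX}, since there both sides of the identity already match in filtration degree. The Serre relation \eqref{s-serre} is obtained by specializing \eqref{eq:relDS} to $r_1 = \cdots = r_m = 0$: all terms of the $S_m$-sum then coincide, giving $m!\,\mathrm{ad}(x_{i0}^\pm)^{1-a_{ij}}(x_{js}^\pm) = 0$ in $Y(\mfg)$, which descends to $\mathrm{ad}(\bar x_{i0}^\pm)^{1-a_{ij}}(\bar x_{js}^\pm)=0$ in the associated graded. For the remaining relations \eqref{s-HX} and \eqref{s-xx}, the key observation is that the right-hand sides of \eqref{eq:relexHX} and \eqref{eq:relexXX} are genuine products of generators of total filtration degree $r+s$, whereas each commutator on the left lives in $\mbF_{r+s+1}$; hence passing to $\mbF_{r+s+1}/\mbF_{r+s}$ annihilates the deformation terms and yields
\begin{align*}
[\bar h_{i,r+1}, \bar x_{js}^\pm] &= [\bar h_{ir}, \bar x_{j,s+1}^\pm], \\
[\bar x_{i,r+1}^\pm, \bar x_{js}^\pm] &= [\bar x_{ir}^\pm, \bar x_{j,s+1}^\pm]
\end{align*}
in $\gr Y(\mfg)$. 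The second identity is precisely \eqref{s-xx}. For \eqref{s-HX}, combine the first identity with the base case $[\bar h_{i0}, \bar x_{js}^\pm] = \pm 2d_{ij}\bar x_{js}^\pm$ read off from \eqref{eq:relHX}, and induct on $r$ uniformly in $s$ to obtain $[\bar h_{ir}, \bar x_{js}^\pm] = \pm 2d_{ij}\bar x_{j,r+s}^\pm$.

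There is no substantive obstacle in this argument; the only point requiring care is bookkeeping of filtration degrees, ensuring that the $\hbar$-dependent corrections in \eqref{eq:relexHX}--\eqref{eq:relexXX} really do drop out of the appropriate graded component. This is exactly why the proposition is called \emph{weak}: the content here is only that a surjection $U(\mfs) \onto \gr Y(\mfg)$ exists, with the much harder injectivity statement deferred to Theorem \ref{T:PBW} and argued via the vertex representations of Section \ref{Sec:Ver}.
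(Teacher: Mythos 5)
Your proof is correct and matches the paper's intent exactly: the paper dispatches Proposition \ref{P:weakPBW} with the single remark that it is ``immediate from the defining relations of $Y(\mfg)$,'' and your argument simply spells out that routine verification, passing each relation \eqref{eq:relHH}--\eqref{eq:relDS} to the associated graded and noting that the right-hand sides of \eqref{eq:relexHX}--\eqref{eq:relexXX} drop a filtration degree. The only detail worth flagging is that you could have avoided the small induction for \eqref{s-HX} by citing the same reduction used in the proof of Proposition \ref{P:s->Yh} (where \eqref{s-HX} is shown equivalent to the pair \eqref{s-HX'}), but your direct induction is equally valid.
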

The statement that $\phi$ is injective is equivalent to the Poincar\'{e}-Birkhoff-Witt Theorem for the Yangian. 
For $\mfg$ of finite type this was proven in the early 1990's by Levendorskii \cite{Le} (see also Appendix B in  \cite{FT} and Proposition 2.2 in \cite{GRW}), but in the general setting this remains a conjecture. We will prove the injectivity of $\phi$ for $\mfg$ of simply laced affine type in Section \ref{sec:PBW}.

%%%%%%%%%%%%%%%%%%%%%%%%%%%%%%%%%%%%%%%%%%%%%%%%%%%%%%%%%%%%%%%%%%%%%%%%
% Subsection: The coproduct $\Delta_u$
%%%%%%%%%%%%%%%%%%%%%%%%%%%%%%%%%%%%%%%%%%%%%%%%%%%%%%%%%%%%%%%%%%%%%%%%
\subsection{The coproduct $\Delta_u$}\label{ssec:Delta_u}

The Yangian of a finite-dimensional simple Lie algebra is well-known to admit the structure of a Hopf algebra. In particular, it is equipped with a coassociative algebra homomorphism $\Delta:Y(\mfg)\to Y(\mfg)\otimes Y(\mfg)$, its coproduct. When the underlying simple Lie algebra is replaced with a more general Kac-Moody algebra, the formulas used to define $\Delta$ are no longer well-defined. However, it was shown in \cite{GNW} that, when $\mfg$ is affine, there is an algebra homomorphism $\Delta_u:Y(\mfg)\to (Y(\mfg)\otimes Y(\mfg))(\!(u)\!)$ which, in a strictly formal sense, has limit at $u=1$ which is in agreement with $\Delta$. The definition of $\Delta_u$ is contained in the following theorem. Set $\tilde h_{i1}=h_{i1}-\frac{1}{2}h_{i0}^2$ for all $i\in I$ and $\square(a)=a\otimes 1+1\otimes a$ for all $a\in Y(\mfg)$. 
\begin{theorem}[Thm.\ 6.2 of \cite{GNW}]\label{T:coprodu}
 Assume that the Cartan matrix $\mbA$ of $\mfg$ is of affine type, but not of type $A_1^{(1)}$ or $A_2^{(2)}$. Then there is an algebra homomorphism 
 \begin{equation*}
  \Delta_u:Y(\mfg)\to (Y(\mfg)\otimes Y(\mfg))(\!(u)\!)
 \end{equation*}
 uniquely determined by 
 \begin{gather} 
\begin{split}\label{Delta_u}
 \Delta_u(x_{i0}^\pm)=x_{i0}^\pm \otimes 1 + 1\otimes x_{i0}^\pm u^{\pm 1}, \quad \Delta_u(h_{i0})=\square(h_{i0}),\\
 \Delta_u(\tilde h_{i1})=\square (\tilde h_{i1})-  \sum_{\alpha\in \Delta_+^\mathrm{re}}(\alpha,\alpha_i)x_\alpha^-\otimes x_\alpha^+ u^{\mathrm{ht}(\alpha)},
 \end{split}
\end{gather}
for all $i\in I$, where $\Delta_+^{\mathrm{re}}$ is the set of positive real roots, $\mathrm{ht}(\sum_{i\in I}n_i \alpha_i)=\sum_{i\in I}n_i$, and $x_\alpha^\pm\in \mfg_{\pm \alpha}$ are such that $(x_\alpha^+,x_\alpha^-)=1$. 
\end{theorem}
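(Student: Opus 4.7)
The plan is to verify that the given assignment extends to an algebra homomorphism by checking compatibility with the defining relations of $Y(\mfg)$ on a minimal generating set. First I would observe that $Y(\mfg)$ is generated as a $\C$-algebra by $\{h_{i0}, x_{i0}^\pm, \tilde h_{i1}\}_{i\in I}$: indeed, \eqref{eq:relexHX} with $i=j$ and $r=s=0$ lets one solve for $x_{i1}^\pm$ in terms of $[\tilde h_{i1}, x_{i0}^\pm]$ and $h_{i0} x_{i0}^\pm$ (using $d_{ii}\neq 0$), after which all higher modes $x_{ir}^\pm$ are produced by iterating \eqref{eq:relexXX} and the $h_{ir}$ by \eqref{eq:relXX}. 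It thus suffices to verify that the images of a finite list of specializations of the relations of Definition~\ref{D:Y(g)} hold in $(Y(\mfg)\otimes Y(\mfg))(\!(u)\!)$.

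Before performing these checks, I would justify that the infinite sum in \eqref{Delta_u} is a well-defined element of the Laurent series ring. For affine $\mfg$, each positive real root has the form $\beta+n\delta$ with $\beta$ a root of the underlying finite-type subalgebra and $n\geq 0$, or $-\beta+n\delta$ with $n\geq 1$; in both cases $\mathrm{ht}(\alpha)$ grows linearly with $n$, so each fixed power $u^N$ receives contributions from only finitely many positive real roots. Among the identities to check, the easy ones involve only $h_{i0}$ and $x_{i0}^\pm$: the Chevalley relations and the Serre relations \eqref{eq:relDS} at $r=s=0$ follow from the corresponding relations in $\mfg$, because $\Delta_u$ restricts on $x_{i0}^\pm$ to the standard coproduct twisted by the central scalar $u^{\pm 1}$, and a direct computation shows that $\mathrm{ad}(\Delta_u(x_{i0}^\pm))^m(\Delta_u(x_{j0}^\pm))$ decomposes as $\mathrm{ad}(x_{i0}^\pm)^m(x_{j0}^\pm)\otimes 1$ plus its analogue in the second tensor factor.

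The heart of the proof, and the main obstacle, is verifying the identities involving $\tilde h_{i1}$, most notably
\[
[\Delta_u(\tilde h_{i1}),\Delta_u(x_{j0}^\pm)] = \Delta_u\bigl([\tilde h_{i1},x_{j0}^\pm]\bigr)
\quad\text{and}\quad
[\Delta_u(\tilde h_{i1}),\Delta_u(\tilde h_{j1})] = 0,
\]
the right-hand side of the first being determined by \eqref{eq:relexHX} after rewriting $\tilde h_{i1} = h_{i1}-\tfrac12 h_{i0}^2$. Expanding the left-hand side produces a sum over $\alpha\in\Delta_+^\mathrm{re}$ of tensor products $[x_\alpha^\mp,x_{j0}^\pm]\otimes x_\alpha^\pm$ and $x_\alpha^\mp\otimes [x_\alpha^\pm,x_{j0}^\pm]$, weighted by $(\alpha,\alpha_i)u^{\mathrm{ht}(\alpha)\pm 1}$, and matching it against the simple polynomial expression on the right reduces to an identity about the structure constants of $\mfg$. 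Concretely, for each positive real root $\alpha$ the contributions of the pair $(\alpha,\alpha\pm\alpha_j)$ must combine to yield precisely the deformation term prescribed by \eqref{eq:relexHX}; the Jacobi-style computation for $[\tilde h_{i1},\tilde h_{j1}]=0$ likewise reduces to symmetry of a double root sum under the translation $\alpha\mapsto\alpha+\delta$. The exclusion of $A_1^{(1)}$ is forced by condition \eqref{A-cond} already at the level of Definition~\ref{D:Y(g)}, while for $A_2^{(2)}$ the exceptional order-$5$ Serre relation combined with the short/long root structure of the twisted affine root system makes the above combinatorics fail in its present form, necessitating a separate treatment that lies outside the approach sketched here.
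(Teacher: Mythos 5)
The paper does not contain its own proof of this statement: Theorem~\ref{T:coprodu} is quoted verbatim as Theorem~6.2 of \cite{GNW}, and the present paper simply cites it and builds on it (in particular in Subsection~\ref{ssec:Delta_u} and Section~\ref{sec:PBW}). So there is no ``paper's own proof'' for your argument to be compared against; what can be assessed is whether your sketch would stand on its own as a proof.

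Your high-level strategy is the natural one: pass to a minimal generating set such as $\{x_{i0}^\pm,\tilde h_{i1}\}_{i\in I}$ (note $h_{i0}=[x_{i0}^+,x_{i0}^-]$ is redundant, so you may as well drop it), check that the infinite sum over $\Delta_+^{\mathrm{re}}$ lands in $(Y(\mfg)\otimes Y(\mfg))(\!(u)\!)$ because for affine $\mfg$ only finitely many positive real roots have any given height, and then verify compatibility with a finite presentation. However, what you have written is a plan, not a proof, and the gap is exactly where you flag it: the compatibility identities $[\Delta_u(\tilde h_{i1}),\Delta_u(x_{j0}^\pm)]=\Delta_u([\tilde h_{i1},x_{j0}^\pm])$ and $[\Delta_u(\tilde h_{i1}),\Delta_u(\tilde h_{j1})]=0$ are asserted to ``reduce to an identity about the structure constants of $\mfg$'' without that identity being stated or proved. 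These are precisely the delicate calculations: one must track, for each positive real root $\alpha$, how the bracket $[x_\alpha^\mp,x_{j0}^\pm]$ distributes over the root strings, sum over all of $\Delta_+^{\mathrm{re}}$, and see the telescoping that produces the term $\pm d_{ij}(h_{i0}\otimes x_{j0}^\pm u^{\pm1}+x_{j0}^\pm u^{\pm1}\otimes h_{i0})$ dictated by \eqref{eq:relexHX}. Without this verification there is no proof.

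Two further points. First, ``it thus suffices to verify a finite list of specializations of the relations'' needs justification: the defining relations \eqref{eq:relHH}--\eqref{eq:relDS} form an infinite family, so you must explicitly invoke (or prove) a Levendorskii-type finite presentation of $Y(\mfg)$ in terms of $\{x_{i0}^\pm,\tilde h_{i1}\}$ for affine $\mfg$, and then check compatibility with exactly that finite list. Second, your explanation of the $A_2^{(2)}$ exclusion is too vague to be considered a justification: noting that $A_2^{(2)}$ satisfies \eqref{A-cond} (since $\min\{|a_{01}|,|a_{10}|\}=1$), the Yangian is defined in that case, so the obstruction is not one of definitions but must emerge from a concrete failure in the root-sum computation for $\tilde h_{i1}$, which you do not exhibit. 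As it stands, your proposal correctly identifies the structure of a proof and its hardest steps, but it does not carry those steps out.
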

The morphism $\Delta_u$ is not coassociative in the standard sense, but it satisfies the "twisted" coassociativity relation 
\begin{equation}\label{u-coas}
(\Delta_u\otimes \mathrm{id}) \circ \Delta_{uv}=(\mathrm{id}\otimes \Delta_v) \circ \Delta_{u}.
\end{equation}
By repeated application of $\Delta_u$, one can obtain an algebra homomorphism $\Delta_u^k:Y(\mfg)\to (Y(\mfg)^{\otimes (k+1)})(\!(u)\!)$ for each $k\geq 0$. However, due to the presence of the parameter $u$ and the twisted coassociativity property \eqref{u-coas}, this must be handled carefully. 

Given an associative unital $\C$-algebra $\mcA$ and a fixed $k\geq 1$, we denote by 
$\mcA(\!(u_k,\ldots,u_1)\!)$ the localization of $\mcA[\![u_k,\ldots,u_1]\!]$ at the multiplicative set 
\begin{equation*}
 S=\{u_k^{m_k}\cdots u_1^{m_1}\,:\, m_a\geq 0\}.
\end{equation*}
Equivalently, $\mcA(\!(u_k,\ldots,u_1)\!)$ can be realized as the subspace of $\mcA[\![u_k^{\pm 1},\ldots,u_1^{\pm 1}]\!]$ consisting of elements
\begin{equation*}
 \sum_{\ell_1,\ldots,\ell_k \in \Z}a_{\ell_k,\ldots,\ell_1} u_k^{\ell_k}\cdots u_1^{\ell_1}
\end{equation*}
for which there exists $N\geq 0$ such that, for any $1\le m\le k$, $a_{\ell_k,\ldots,\ell_1}=0$ whenever $\ell_m<-N$, with product obtained by extending the usual multiplication of formal series in $\mcA[\![u_k,\ldots,u_1]\!]$. The key feature of this algebra we will exploit is that
\begin{equation}\label{ev}
\mathrm{ev}_{u,k}: f(u_k,\ldots,u_1)\mapsto f(u,\ldots,u)\quad \forall \; f(u_k,\ldots,u_1)\in \mcA(\!(u_k,\ldots,u_1)\!). 
\end{equation}
determines an algebra homomorphism $\mathrm{ev}_{u,k}:\mcA(\!(u_k,\ldots,u_1)\!)\to \mcA(\!(u)\!)$. 

\begin{NB}
 The evaluation $\mcA[u_k,\ldots,u_1]\to \mcA[u]$, $u_m\mapsto u$ for all $m$, extends by continuity to $\mcA[\![u_k,\ldots,u_1]\!]\to\mcA[\![u]\!]\subset \mcA(\!(u)\!)$. As it sends elements of $S$ to units, the universal property of localization gives $\mathrm{ev}_{k,u}$. 
\end{NB}
Let us define $\mcA(\!(u_{k})\!)(\!(u_{k-1})\!)\cdots (\!(u_1)\!)$ inductively as $\Big(\cdots\big(\mcA(\!(u_{k})\!)\big)(\!(u_{k-1})\!)\cdots \Big) (\!(u_1)\!)$.
To define $\Delta_u^k$ we will make use of auxiliary morphisms 
\begin{equation*}
\Delta_{u_1,\ldots u_k}:Y(\mfg)\to (Y(\mfg)^{\ot (k+1)})(\!(u_{k})\!)(\!(u_{k-1})\!)\cdots (\!(u_1)\!)
\end{equation*}
which are defined recursively as follows:  $\mathrm{id}^{\ot (k-1)}\otimes \Delta_{u_k}$ extends to a morphism 
\begin{equation*}
(Y(\mfg)^{\ot k})(\!(u_{k-1})\!)\cdots (\!(u_1)\!)\to (Y(\mfg)^{\ot (k+1)})(\!(u_{k})\!)(\!(u_{k-1})\!)\cdots (\!(u_1)\!),
\end{equation*}
and the composition of this morphism with $\Delta_{u_1,\ldots,u_{k-1}}$ is precisely $\Delta_{u_1,\ldots,u_k}$. Inductively, we find that 
\begin{gather}
\begin{split} \label{Delta_u^k:1}
 \Delta_{u_1,\ldots,u_k}(h_{i0})=\sum_{a=1}^{k+1}(h_{i0})_a,\quad \Delta_{u_1,\ldots,u_k}(x_{i0}^{\pm})=\sum_{a=1}^{k+1}(x_{i0}^\pm)_a u_1^{\pm 1}\cdots u_{a-1}^{\pm 1},\\
 \Delta_{u_1,\ldots,u_k}(\tilde h_{i1})=\sum_{a=1}^{k+1}(\tilde h_{i1})_a-\sum_{a<b}\sum_{\alpha\in \Delta_+^{\mathrm{re}}}(\alpha,\alpha_i)(x_\al^-)_a(x_\al^+)_b u_a^{\mathrm{ht}(\alpha)}\cdots u_{b-1}^{\mathrm{ht}(\alpha)},
\end{split}
\end{gather}
where $(X)_a=1^{\ot (a-1)}\ot X \ot 1^{\ot (k+1-a)}$, and the product $u_1^{\pm 1}\cdots u_{a-1}^{\pm 1}$ with $a=1$ is understood to equal $1$. Consequently, $\mathrm{Image}(\Delta_{u_1,\ldots,u_k})\subset Y(\mfg)^{\ot (k+1)}(\!(u_k,\ldots u_1)\!)$, and we may therefore set
\begin{equation}\label{Delta_u^k:2}
 \Delta_u^k=\mathrm{ev}_{u,k}\circ \Delta_{u_1,\ldots,u_k}: Y(\mfg)\to  Y(\mfg)^{\ot (k+1)}(\!(u)\!) \quad \forall \; k\geq 1,
\end{equation}
 where $\mathrm{ev}_{u,k}$ is as in \eqref{ev} with $\mcA=Y(\mfg)^{\ot (k+1)}$. 
 
 The explicit formulas \eqref{Delta_u^k:1} imply that $\Delta_u^k$ is filtered in the sense that
\begin{equation}\label{Delta_u^k:3}
 \Delta_{u}^k(\mbF_\ell)\subset (\mbF_\ell(Y(\mfg)^{\ot (k+1)}))(\!(u)\!),
\end{equation}
where $\mbF_\ell(Y(\mfg)^{\ot (k+1)})=\sum_{a_1+\ldots+a_{k+1}=\ell} \mbF_{a_1}\ot \cdots \ot \mbF_{a_{k+1}}$. By \eqref{Delta_u^k:1}, the associated graded morphism $\gr\Delta_u^k$ has image contained in $\gr Y(\mfg)^{\otimes (k+1)}[u^{\pm 1}]$ for each $k$: 
\begin{equation}\label{grDelta_u^k}
 \gr\Delta_u^k: \gr Y(\mfg)\to \gr(Y(\mfg)^{\otimes (k+1)})[u^{\pm 1}] = \gr(Y(\mfg))^{\otimes (k+1)}[u^{\pm 1}].
\end{equation}
The family of filtered morphisms $\{\Delta_u^k\}_{k\geq 1}$ will play a decisive role in the proof of the Poincar\'{e}-Birkhoff-Witt Theorem in Section \ref{sec:PBW}, as will the analogous morphisms $\{\Delta_{\mfs,u}^k\}_{k\geq 1}$ for the enveloping algebra $U(\mfs)$, which we define now. 

Let $\Delta_\mfs$ denote the standard coproduct on $U(\mfs)$. The assignment $X_{ir}^\pm\mapsto  u^{\pm 1}X_{ir}^\pm$, $H_{ir}\mapsto H_{ir}$ for all $i\in I$ and $r\geq 0$ extends to an algebra homomorphism $\mathrm{s}_{u}:U(\mfs)\to U(\mfs)[u^{\pm 1}]$, and we may set 
\begin{equation*}
 \Delta_{\mfs,u}=(\mathrm{id}\otimes \mathrm{s}_u)\circ \Delta_{\mfs}:U(\mfs)\to (U(\mfs)\otimes U(\mfs))[u^{\pm 1}].
\end{equation*}
The morphisms $\Delta_{\mfs,u}^k:U(\mfs)\to U(\mfs)^{\otimes (k+1)}[u^{\pm 1}]$ are now constructed in exactly the same way as $\Delta_u^k$ (see \eqref{Delta_u^k:2}). On generators, we have 
\begin{equation*}
 \Delta_{\mfs,u}^k(H_{ir})=\sum_{a=1}^{k+1}(H_{ir})_a,\quad \Delta_{\mfs,u}^k(X_{ir}^\pm)=\sum_{a=1}^{k+1}(X_{ir}^\pm)_a u^{\pm(a-1)}.
\end{equation*}
In particular, we have the following commutative diagram:

\begin{equation} 
\begin{alignedat}{50}\label{Delta_u:com}
\xymatrix{
U(\mfs) \ar[rrr]^{ \Delta_{\mfs,u}^k } \ar[d]_{\phi} & & &  U(\mfs)^{\otimes {(k+1)}}[u^{\pm 1}] \ar[d]^{\phi^{\otimes (k+1)}}
\\
\gr Y(\mfg) \ar[rrr]_{ \gr\Delta_u^k } & & & \gr Y(\mfg)^{\otimes {(k+1)}}[u^{\pm 1}]
}
\end{alignedat}
\end{equation}
The map $\phi$ is the one given in Proposition \ref{P:weakPBW}.

%%%%%%%%%%%%%%%%%%%%%%%%%%%%%%%%%%%%%%%%%%%%%%%%%%%%%%%%%%%%%%%%%%%%%%%%
%%%%%%%%%%%%%%%%%%%%%%%%%%%%%%%%%%%%%%%%%%%%%%%%%%%%%%%%%%%%%%%%%%%%%%%%
% Section: The centrally extended Yangian double
%%%%%%%%%%%%%%%%%%%%%%%%%%%%%%%%%%%%%%%%%%%%%%%%%%%%%%%%%%%%%%%%%%%%%%%%
%%%%%%%%%%%%%%%%%%%%%%%%%%%%%%%%%%%%%%%%%%%%%%%%%%%%%%%%%%%%%%%%%%%%%%%%
\section{The centrally extended Yangian double of \texorpdfstring{$\mfg$}{g}}\label{Sec:DYg}

In this section we introduce the centrally extended Yangian double associated to $\mfg$ and study its basic algebraic properties. 

%%%%%%%%%%%%%%%%%%%%%%%%%%%%%%%%%%%%%%%%%%%%%%%%%%%%%%%%%%%%%%%%%%%%%%%%
% Subsection: Definition of the Yangian double
%%%%%%%%%%%%%%%%%%%%%%%%%%%%%%%%%%%%%%%%%%%%%%%%%%%%%%%%%%%%%%%%%%%%%%%%
\subsection{Definition of the Yangian double}\label{ssec:defDY}
Let $\delta(w,z)=\sum_{r\in \Z} w^r z^{-r-1}\in \C[\![w^{\pm 1}, z^{\pm 1}]\!]$ denote the formal delta function.  Equivalently, 
\begin{equation}\label{eq:deltaexp} 
 \delta(w,z) = \frac{z^{-1}}{1-z^{-1}w} + \frac{w^{-1}}{1-w^{-1}z},\quad \text{ where }\quad \frac{x^{-1}}{1-x^{-1}y}=\sum_{k\geq 0}y^k x^{-k-1}.
\end{equation}
\begin{definition}\label{DY:def}
The centrally extended Yangian double $DY_{\hbar}^\msc(\mfg)$ is the $\C[\hbar]$-algebra generated by the coefficients $\{h_{ir},x_{ir}^\pm\}_{i\in I,r\in \Z}$ of 
\[x_i^{\pm}(z) = \sum_{r\in \Z} x_{ir}^{\pm} z^{-r-1},\; h_i^+(z) = 1+\hbar\!\!\sum_{r\in \Z_{\geq 0}} h_{ir}z^{-r-1},\; h_i^-(z) = 1-\hbar\!\!\sum_{r\in \Z_{<0}} h_{ir} z^{-r-1},\]
for all $i\in I$, together with an element $\msc$, which are subject to the defining relations 
\begin{gather}
\tfrac{1}{\hbar}[\msc,h_i^\pm(z)]=0=[x_i^\pm(z),\msc] , \label{DY:c}\\
\tfrac{1}{\hbar^2}[h_{i}^{\pm}(z),\,h_j^{\pm}(w)] = 0 , \label{DY:hh}\\
\tfrac{1}{\hbar^2}\left(\left((z-w)^2-(\msc_{ij}^-)^2\right)h_i^+(z)\,h_j^-(w) -\left((z-w)^2-(\msc_{ij}^+)^2\right)h_j^-(w)\,h_i^+(z)\right)=0 , \label{DY:hhh}\\
\tfrac{1}{\hbar}\left(\left(z-w\mp\msc_{ij}^-\right)h_i^+(z)\,x_j^{\pm}(w)-\left(z-w\pm\msc_{ij}^+\right)x_j^{\pm}(w)\,h_i^+(z)\right)=0, \label{DY:xh+}\\
\tfrac{1}{\hbar}\left(\left(z-w\mp\hbar d_{ij}\right)h_i^-(z)\,x_j^{\pm}(w)-\left(z-w\pm\hbar d_{ij}\right) 
x_j^{\pm}(w)\,h_i^-(z)\right)=0 , \label{DY:xh-}\\
\left(z-w\mp\hbar d_{ij}\right) x_{i}^{\pm}(z)\,x_{j}^{\pm}(w) = \left(z-w\pm\hbar d_{ij}\right) x_{j}^{\pm}(w)\,x_{i}^{\pm}(z) ,  \label{DY:xx}\\
[x_i^+(z),x_j^-(w)] = \tfrac{\delta_{ij}}{\hbar} \left(\delta({w + \hbar \msc},{z}) h_i^+\!\left( w+\tfrac{\hbar \msc}{2} \right) - \delta({w},{z}) h_i^-\!\left( z \right) \right) , \label{DY:xxh}\\
\sum_{\sigma \in S_{m}} [x_i^{\pm}(z_{\sigma(1)}), [x_i^{\pm}(z_{\sigma(2)}), \cdots, [x_i^{\pm}(z_{\sigma(m)}),x_j^{\pm}(w)] \cdots]] = 0, \label{DY:serre}
\end{gather}
where $\msc_{ij}^\pm=\hbar d_{ij}\pm \frac{\hbar \msc}{2}$ and in the last relation $i\neq j$ and $m=1-a_{ij}$.
\end{definition}
For each $\kappa\in \C$, we define the \textit{Yangian double at level $\kappa$} to be the $\C[\hbar]$-algebra
\begin{equation*}
 DY_\hbar^{\kappa}(\mfg)=DY_{\hbar}^\msc(\mfg)/(\msc-\kappa)DY_{\hbar}^\msc(\mfg).
\end{equation*}
\begin{remark}
Even though the relations  \eqref{DY:c}-\eqref{DY:xh-} and \eqref{DY:xxh} involve negative powers of $\hbar$, this is not the case for the corresponding relations among the generators. (See Lemma \ref{DY':def}.) Not dividing by $\hbar$ could create $\hbar$-torsion elements.
\end{remark}
\begin{remark}
The practice of calling $DY_\hbar^\msc(\mfg)$ the "Centrally extended Yangian double" is explained by the following: when $\mfg$ is finite-dimensional, $DY_\hbar^0(\mfg)$ has been conjectured to be equal, after completion, to the Hopf algebra double of $Y_\hbar(\mfg)$ \cite{KhTo}, whereas $DY_\hbar^\msc(\mfg)$ has been conjecturally described, also after completion, as a quotient of the Hopf algebra double of $Y_\hbar(\mfg)\otimes \C[\msc]$ by a derivation \cite{Kh}. These conjectures have been proven for $\mfg=\mfsl_2$: see \cite[Prop.\ 2.1 (ii)]{KhTo} and \cite[Thm.\ 3.1]{Kh}. 

Although this interpretation of $DY_\hbar^\msc(\mfg)$ does not extend beyond the finite case, Definition \ref{DY:def} is a natural extension of the definitions found in the literature (see in particular \cite[\S 6]{DiKh} and \cite[Cor.\ 3.4]{Io}). 
\end{remark}
The following lemma is straightforward. 
\begin{lemma}\label{DY':def}
For each $i\in I$, set $\wt h_i^\pm(z) =\pm \frac{1}{\hbar}(h_i^\pm(z)-1)$. Then the relations of Definition \ref{DY:def} are equivalent to
\begin{gather}
[\msc,\wt h_i^\pm(z)]=0=[x_i^\pm(z),\msc] , \label{DY':c}\\
\wt h_{i}^{\pm}(z)\,\wt h_j^{\pm}(w) = \wt h_j^{\pm}(w)\,\wt h_{i}^{\pm}(z) , \label{DY':hh}\\
\begin{aligned}\label{DY':hhh}
\left((z-w)^2-(\msc_{ij}^+)^2\right)&\wt h_j^-(w)\,\wt h_i^+(z)-\left((z-w)^2-(\msc_{ij}^-)^2\right)\wt h_i^+(z)\,\wt h_j^-(w) \\
                                                       &=-2d_{ij}\msc+2\hbar d_{ij}(\wt h_j^-(w)-\wt h_i^+(z))\msc, 
\end{aligned}\\
\left(z-w\mp \msc_{ij}^-\right)\wt h_i^+(z)\,x_j^{\pm}(w)-\left(z-w\pm \msc_{ij}^+\right)x_j^{\pm}(w)\,\wt h_i^+(z)=\pm 2d_{ij} x_j^\pm(w), \label{DY':xh+}\\
\left(z-w\pm\hbar d_{ij}\right) 
x_j^{\pm}(w)\,\wt h_i^-(z)-\left(z-w\mp\hbar d_{ij} \right)\wt h_i^-(z)\,x_j^{\pm}(w)=\pm 2 d_{ij}x_j^\pm(w) , \label{DY':xh-}\\
\left(z-w\mp\hbar d_{ij}\right) x_{i}^{\pm}(z)\,x_{j}^{\pm}(w) = \left(z-w\pm\hbar d_{ij}\right) x_{j}^{\pm}(w)\,x_{i}^{\pm}(z) ,  \label{DY':xx}\\
\begin{aligned} \label{DY':xxh}
[x_i^+(z),x_j^-(w)] =& \delta_{ij}\tfrac{1}{\hbar}\left(\delta({w + \hbar \msc},{z})-\delta({w},{z})\right)\\
                     &+\delta_{ij}\left(\delta({w + \hbar \msc},{z})\wt h_i^+(w+\tfrac{\hbar \msc}{2})+\delta({w},{z})\wt h_i^-(z)\right),
\end{aligned}\\
\sum_{\sigma \in S_{m}} [x_i^{\pm}(z_{\sigma(1)}), [x_i^{\pm}(z_{\sigma(2)}), \cdots, [x_i^{\pm}(z_{\sigma(m)}),x_j^{\pm}(w)] \cdots]] = 0, \label{DY':serre}
\end{gather}
where in the last relation $i\neq j$ and $m=1-a_{ij}$.
\end{lemma}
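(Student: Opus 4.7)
The plan is to verify the equivalence by substituting $h_i^\pm(z) = 1 \pm \hbar \wt h_i^\pm(z)$ into each of the relations \eqref{DY:c}--\eqref{DY:serre} and checking that the result agrees with the corresponding relation in the lemma. Since $\wt h_i^\pm(z)$ does not itself involve $\hbar$, this is a purely algebraic manipulation and does not require any structural information about $DY_\hbar^\msc(\mfg)$. The equivalence is then immediate because the map $h_i^\pm(z) - 1 \mapsto \pm\hbar\,\wt h_i^\pm(z)$ is invertible at the level of generators.

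The relations \eqref{DY':c}, \eqref{DY':hh}, \eqref{DY':xx} and \eqref{DY':serre} follow immediately from their counterparts: multiplying $h_i^\pm(z) - 1$ by $\pm \hbar^{-1}$ does not alter commutation relations amongst the $h$-series, and the $x$-relations and Serre relations are untouched by the substitution. For \eqref{DY:xh+} and \eqref{DY:xh-}, substitution collects an additional constant term proportional to $\pm 2 d_{ij} x_j^\pm(w)$ arising from the identity $\msc_{ij}^+ + \msc_{ij}^- = 2\hbar d_{ij}$ applied to the $\hbar^{-1}$ coefficient; moving this term to the right-hand side produces exactly \eqref{DY':xh+} and \eqref{DY':xh-}. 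An analogous split of the $\hbar^{-1}$ contribution from each delta function in \eqref{DY:xxh} yields \eqref{DY':xxh}.

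The verification of \eqref{DY':hhh} is slightly longer but follows the same pattern. One expands
\[ h_i^+(z)\,h_j^-(w) = 1 + \hbar \wt h_i^+(z) - \hbar \wt h_j^-(w) - \hbar^2 \wt h_i^+(z)\wt h_j^-(w), \]
and similarly for $h_j^-(w)\,h_i^+(z)$, and applies the identity $(\msc_{ij}^+)^2 - (\msc_{ij}^-)^2 = 2\hbar^2 d_{ij}\msc$ obtained directly from $\msc_{ij}^\pm = \hbar d_{ij} \pm \tfrac{\hbar\msc}{2}$. The factor $\hbar^{-2}$ then cancels the $\hbar^2$ in this identity, producing the constant term $-2 d_{ij}\msc$, while a residual $\hbar^1$ contribution combines with the centrality of $\msc$ to produce $2\hbar d_{ij}(\wt h_j^-(w)-\wt h_i^+(z))\msc$, giving exactly \eqref{DY':hhh}.

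I anticipate no conceptual obstacle. The only delicate point is the careful bookkeeping of $\hbar$-coefficients in \eqref{DY':hhh}, where one must confirm that all negative powers of $\hbar$ cancel exactly; this is the phenomenon already flagged in the remark preceding the lemma, and the cancellation is ensured precisely by the two identities $\msc_{ij}^+ + \msc_{ij}^- = 2\hbar d_{ij}$ and $(\msc_{ij}^+)^2 - (\msc_{ij}^-)^2 = 2\hbar^2 d_{ij}\msc$.
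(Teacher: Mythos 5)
The paper leaves Lemma~\ref{DY':def} without an explicit proof (the authors evidently regard the verification as routine), so there is no written argument to compare against line by line; your proposal is the natural — indeed essentially forced — proof, and it is correct. Substituting $h_i^\pm(z)=1\pm\hbar\,\wt h_i^\pm(z)$, expanding, and clearing the negative powers of $\hbar$ using $\msc_{ij}^++\msc_{ij}^-=2\hbar d_{ij}$ and $(\msc_{ij}^+)^2-(\msc_{ij}^-)^2=2\hbar^2 d_{ij}\msc$ does produce exactly \eqref{DY':hh}--\eqref{DY':xxh}, with the centrality relation \eqref{DY:c} used to move $\msc$ to the right in the $2\hbar d_{ij}(\wt h_j^-(w)-\wt h_i^+(z))\msc$ term, and the remaining relations are manifestly unchanged by the substitution. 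This is precisely the computation the remark following Definition~\ref{DY:def} gestures at.
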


It is not difficult to deduce from these relations that $DY_\hbar^\msc(\mfg)$ is a $\Z$-graded algebra with grading determined by 
\begin{equation*}
\deg \hbar=1,\quad \deg \msc=0 \quad \text{ and }\quad \deg x_{ir}^\pm=\deg h_{ir}=r \quad \forall \; i\in I,r\in \Z.
\end{equation*}
\begin{NB}
 Let $\mbG_k$ denote the subspace spanned by monomials of degree $k$, and for any fixed $a\in \Z$ consider the subspace $\mbD_a=\prod_{r,s\in\Z}\mbG_{r+s+a}z^{-r-1}w^{-s-1}$ 
 of $DY_\hbar^\msc(\mfg)[\![z^{\pm 1},w^{\pm 1}]\!]$. Then $(z-w)\mbD_a\subset \mbD_{a+1},\hbar \mbD_a\subset \mbD_{a+1}$, and the statement that 
 the defining relations of Lemma \ref{DY':def} are homogeneous can be equivalently stated as follows: For each fixed relation $R=R'$, both $R$ and $R'$ belong to $\mbD_a$ for a fixed $a$. This is not hard to verify.
\end{NB}
Next, for each $\zeta\in \C$ we introduce a $\C$-algebra
\begin{equation*}
 DY_\zeta^\msc(\mfg)=DY_\hbar^\msc(\mfg)/(\hbar-\zeta)DY_\hbar^\msc(\mfg),
\end{equation*}
and we abbreviate $DY^\msc(\mfg)=DY_1^\msc(\mfg)$. 
Note that, analogously to $Y_\zeta(\mfg)$, $DY_\zeta^\msc(\mfg)$ for $\zeta\in \C^\times$ is precisely the $\C$-algebra generated by $\{x_{ir}^\pm,h_{ir}\}_{i\in I,r\in \Z}$ and $\msc$ subject to the defining relations of Definition \ref{DY:def} with $\hbar$ replaced by $\zeta$. For each $\zeta \in \C^\times$, the assignment 
\begin{equation*}
x_{ir}^\pm \mapsto \zeta^{-r} x_{ir}^\pm, \; h_{ir} \mapsto  \zeta^{-r}h_{ir}, \; \msc \mapsto \msc,
\end{equation*}
or equivalently $x_i^\pm(z)\mapsto \zeta x_i^\pm(\zeta z), \, \wt h_i^\pm(z) \mapsto  \zeta \wt h_i^\pm (\zeta z), \, \msc \mapsto \msc $, extends to an isomorphism of algebras between $DY^\msc(\mfg)$ and $DY_\zeta^\msc(\mfg)$. With this in mind, we will henceforth focus primarily on the $\C[\hbar]$-algebra $DY^{\msc}_\hbar(\mfg)$ and the $\C$-algebra $DY^\msc(\mfg)$.

The degree assignments  $\deg \msc=0$ and $\deg x_{ir}^\pm=\deg h_{ir}=r$ determine a $\Z$-filtration (but not a gradation) on $DY^\msc(\mfg)$. For each $k\in \Z$, let $\mbF_k^D$ denote the subspace of $DY^\msc(\mfg)$ spanned by monomials of degree $\leq k$, and let 
\[\mathrm{gr}_\Z DY(\mfg)=\bigoplus_{k\in \Z}\mbF_k^D/\mbF_{k-1}^D\]
denote the corresponding associated graded algebra. 

%%%%%%%%%%%%%%%%%%%%%%%%%%%%%%%%%%%%%%%%%%%%%%%%%%%%%%%%%%%%%%%%%%%%%%%%
% Subsection: Classical limit
%%%%%%%%%%%%%%%%%%%%%%%%%%%%%%%%%%%%%%%%%%%%%%%%%%%%%%%%%%%%%%%%%%%%%%%%
\subsection{Classical limit}\label{ssec:clDY}
\begin{definition}\label{D:t}
Define $\mft$ to be the Lie algebra generated by an element $\mathbf{C}$ together with the coefficients $\{X_{ir}^\pm,H_{ir}\}_{i\in I,r\in \Z}$ of  
\[X_i^\pm(z)=\sum_{k\in \Z}X_{ik}^\pm z^{-k-1}\quad \text{ and }\quad H_i(z)=\sum_{k\in \Z}H_{ik}z^{-k-1}\quad \forall \; i\in I, \]
which are
subject to the defining relations
 \begin{gather}
  [\mathbf{C},H_i(z)]=0=[X_i^\pm(z),\mathbf{C}], \label{t:C}\\
  [H_i(z),H_j(w)]=-2d_{ij}\delta_z({z},{w})\mathbf{C}, \label{t:HH}\\ 
  [H_i(z),X_j^\pm(w)]=\pm 2d_{ij}\delta({z},{w})X_j^\pm(w), \label{t:HX}\\
  [X_i^+(z),X_j^-(w)]=\delta_{ij}\left(\delta({z},{w})H_i(z)-\delta_z({z},{w})\mathbf{C}\right), \label{t:X+X-}\\
  (z-w)[X_i^\pm(z),X_j^\pm(w)]=0,\label{t:XX}\\
  \mathrm{ad}(X_{i0}^\pm)^{1-a_{ij}}(X_j^\pm(z))=0, \label{t:Serre}
 \end{gather}
where $\delta_z({z},{w})=\frac{d}{dz}\delta({z},{w})=\sum_{r\in \Z}rz^{r-1}w^{-r-1}$.
\end{definition}
Note that the degree assignment 
\begin{equation*}
\deg \mathbf{C}=0 \quad \text{ and }\quad \deg X_{ir}^\pm=\deg H_{ir}=r \quad \forall \; i\in I,r\in \Z
\end{equation*}
makes $\mft$ into a $\Z$-graded Lie algebra. Additionally, just as $\mfs$ is an extension of the current algebra $\mfg^\prime[t]$, the Lie algebra $\mft$ is an extension of the loop algebra $\mfg^\prime[t^{\pm 1}]$. In the notation of \eqref{s->g[t]}, the assignment 
\begin{equation*}
\mathbf{C} \mapsto 0, \;  X_{ir}^\pm \mapsto x_i^\pm\ot t^r, \; H_{ir}\mapsto h_i\ot t^r \quad \forall \; i\in I \;\text{ and }\; r\in \Z
\end{equation*}
defines a surjective Lie algebra morphism $\mft\onto \mfg^\prime[t^{\pm 1}]$. We will return to this observation in Section \ref{Sec:aff}.

We now give the analogue of Proposition \ref{P:s->Yh} for the Yangian double. Let $\wt h_i^\pm(z)$ be as in Lemma \ref{DY':def} (now viewed in $DY_0^\msc(\mfg)[\![z^{\pm 1}]\!]$) and set $\wt{h}_i(z) = \wt h_i^+(z)+\wt h_i^-(z)= \sum_{r\in\Z} h_{ir}z^{-r-1}$.
\begin{proposition}\label{P:mftiso}
 The assignment 
 \begin{equation}
   X_i^\pm(z) \mapsto  x_i^\pm(z), \; H_i(z) \mapsto \wt{h}_i(z), \; \mathbf{C}\mapsto \msc \quad \forall \; i\in I
 \end{equation}
 extends to an isomorphism of graded $\C$-algebras $U(\mft)\stackrel{\sim}{\rightarrow} DY_0^\msc(\mfg)$. 
\end{proposition}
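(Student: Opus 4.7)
The plan mirrors the proof of Proposition \ref{P:s->Yh}. First, observe that setting $\hbar = 0$ in the defining relations of Lemma \ref{DY':def} eliminates all quantum corrections: the constants $\msc_{ij}^\pm = \hbar d_{ij} \pm \hbar\msc/2$ vanish, the $(\msc_{ij}^\pm)^2$ terms are $O(\hbar^2)$, and the right-hand sides of \eqref{DY':hhh}, \eqref{DY':xh+}, \eqref{DY':xh-}, and \eqref{DY':xxh} reduce to purely Lie-theoretic expressions. Consequently $DY_0^\msc(\mfg)$ is the enveloping algebra of the Lie algebra $\mft'$ presented by those reduced relations, and the proposition reduces to an isomorphism of Lie algebras $\mft \to \mft'$ sending $X_i^\pm(z) \mapsto x_i^\pm(z)$, $H_i(z) \mapsto \wt h_i^+(z)+\wt h_i^-(z)$, $\mathbf{C} \mapsto \msc$.

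For the forward map $\Phi \colon \mft \to \mft'$, I would verify each relation \eqref{t:C}--\eqref{t:Serre}. The relations \eqref{t:C} and \eqref{t:XX} are transparent, and \eqref{t:Serre} follows from the $\hbar=0$ limit of \eqref{DY':serre} by taking the coefficient of $z_1^{-1}\cdots z_m^{-1}$ (its stronger symmetric form then being a consequence, as in the proof of Proposition \ref{P:s->Yh}). For \eqref{t:HH}, note that $[\wt h_i^\pm(z), \wt h_j^\pm(w)] = 0$ by \eqref{DY':hh}, so $[\wt h_i(z), \wt h_j(w)]$ reduces to two cross-terms. The relation \eqref{DY':hhh} at $\hbar=0$ gives $(z-w)^2[\wt h_j^-(w), \wt h_i^+(z)] = -2d_{ij}\msc$, and the mode content of $\wt h_i^+$ (negative powers of $z$) and $\wt h_j^-$ (non-negative powers of $w$) forces $[\wt h_i^+(z), \wt h_j^-(w)] = 2d_{ij}\msc\,\iota_{z,w}\tfrac{1}{(z-w)^2}$; its $(+,-)\leftrightarrow(-,+)$ variant analogously yields $[\wt h_i^-(z), \wt h_j^+(w)] = -2d_{ij}\msc\,\iota_{w,z}\tfrac{1}{(z-w)^2}$. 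Summing and invoking $\delta_z(z,w) = \iota_{w,z}\tfrac{1}{(z-w)^2} - \iota_{z,w}\tfrac{1}{(z-w)^2}$ produces \eqref{t:HH}. The relation \eqref{t:HX} is proved in the same style from \eqref{DY':xh+}, \eqref{DY':xh-} at $\hbar=0$, using $\delta(z,w) = \iota_{z,w}\tfrac{1}{z-w} - \iota_{w,z}\tfrac{1}{z-w}$.

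The relation \eqref{t:X+X-} is the subtlest step. Expanding $\delta(w+\hbar\msc, z)$ as a formal Taylor series in $\hbar$ gives
\[ \tfrac{1}{\hbar}\bigl(\delta(w+\hbar\msc, z) - \delta(w,z)\bigr) \xrightarrow[\hbar\to 0]{} \msc\,\partial_w\delta(w,z) = -\msc\,\delta_z(z,w), \]
the second equality using $\delta(w,z)=\delta(z,w)$ together with $\partial_w\delta(z,w) = -\partial_z\delta(z,w)$. The remaining part $\delta(w+\hbar\msc, z)\wt h_i^+(w+\hbar\msc/2) + \delta(w,z)\wt h_i^-(z)$ collapses at $\hbar=0$ to $\delta(z,w)\wt h_i(z)$ via the standard delta-substitution identity $\delta(z,w)f(z) = \delta(z,w)f(w)$. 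Combining, \eqref{DY':xxh} at $\hbar=0$ is precisely \eqref{t:X+X-}.

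For the inverse $\Psi\colon \mft'\to\mft$ (defined mode-by-mode), I would verify the $\hbar=0$ versions of Lemma \ref{DY':def} in $U(\mft)$ by extracting mode-by-mode consequences of \eqref{t:C}--\eqref{t:Serre}. For instance \eqref{t:HH} yields $[H_{ir}, H_{js}] = 2d_{ij}r\delta_{r+s,0}\mathbf{C}$, from which $[H_i^\pm, H_j^\pm] = 0$ and $[H_i^+(z), H_j^-(w)] = 2d_{ij}\mathbf{C}\,\iota_{z,w}\tfrac{1}{(z-w)^2}$ follow by direct summation, recovering \eqref{DY':hh} and \eqref{DY':hhh} at $\hbar=0$. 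Parallel mode-counting arguments handle the XH, XX, XXH and Serre relations. Since $\Phi$ and $\Psi$ are manifestly mutually inverse on generators and respect the degree assignments $\deg X_{ir}^\pm = \deg H_{ir} = r$, we obtain the desired graded isomorphism. The main obstacle throughout is the bookkeeping of which expansion $\iota_{z,w}$ or $\iota_{w,z}$ of $(z-w)^{-k}$ is forced by the mode content of each factor, and the correct identification of the $\hbar\to 0$ limit of the shifted delta function in \eqref{DY':xxh}.
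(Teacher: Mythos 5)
Your proposal is correct and follows essentially the same route as the paper's proof: specialize the relations of Lemma \ref{DY':def} at $\hbar = 0$, recognize $DY_0^\msc(\mfg)$ as the enveloping algebra of a Lie algebra $\mft'$ presented by the resulting relations, and then verify relation-by-relation (and mode-by-mode) that this presentation matches Definition \ref{D:t}, with the sole delicate computation being the $\hbar\to 0$ limit of the shifted delta function in the $[X^+,X^-]$ relation, which you handle exactly as the paper does (arriving at $-\msc\,\delta_z(z,w)$), and the Serre implication deferred to the mechanism of Proposition \ref{P:s->Yh}. The only cosmetic difference is that you package the binomial expansions using $\iota_{z,w}$ and $\iota_{w,z}$ notation where the paper writes explicit geometric series like $\frac{z^{-1}}{1-z^{-1}w}$, but the logic and the bookkeeping of which expansion is forced by mode content are identical.
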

\begin{proof}
 By definition, $DY_0^\msc(\mfg)$ is the $\C$-algebra generated by the coefficients of $x_i^\pm(z)$, $\wt{h}_i(z)$ and the central element $ \msc$, which are subject to the relations of Definition \ref{DY:def} with $\hbar$ replaced by $0$. Lemma \ref{DY':def} implies that, in addition to the centrality of $ \msc$, these relations are
 \begin{gather}
\wt h_{i}^{\pm}(z)\,\wt h_j^{\pm}(w) = \wt h_j^{\pm}(w)\, \wt h_{i}^{\pm}(z) , \label{D0:hh}\\
(z-w)^2[\wt h_j^-(w),\wt h_i^+(z)]=-2d_{ij}\msc \label{D0:hh2} \\
(z-w)[\wt h_i^+(z), x_j^{\pm}(w)]=\pm 2d_{ij}  x_j^\pm(w), \label{D0:xh+}\\
(z-w)[ x_j^{\pm}(w),\wt h_i^-(z)]=\pm 2 d_{ij} x_j^\pm(w) , \label{D0:xh-}\\
\left(z-w\right) [ x_{i}^{\pm}(z), x_{j}^{\pm}(w)] = 0,  \label{D0:xx}\\
\begin{aligned} \label{D0:xxh}
[ x_i^+(z), x_j^-(w)] =& \delta_{ij}\lim_{\hbar\to 0}\tfrac{1}{\hbar}\left(\delta({w + \hbar \msc},{z})-\delta({w},{z})\right) +\delta_{ij}\delta({w},{z}) \wt{h}_i(w),
\end{aligned}\\
\sum_{\sigma \in S_{m}} [ x_i^{\pm}(z_{\sigma(1)}), [ x_i^{\pm}(z_{\sigma(2)}), \cdots, [ x_i^{\pm}(z_{\sigma(m)}), x_j^{\pm}(w)] \cdots]] = 0. \label{D0:serre}
\end{gather}
It therefore suffices to show that these relations are equivalent to the defining relations of Definition \ref{D:t} (with $(H_{ir},X_{ir}^\pm)$ replaced by  
$(h_{ir}, x_{ir}^\pm)$ and $\mathbf{C}$ by $ \msc$). 

\noindent\textit{Step 1:} (\eqref{D0:hh},\eqref{D0:hh2}) $\iff$ \eqref{t:HH}. 

Multiplying \eqref{D0:hh2} by $\sum_{k\geq 0}(k+1)w^k z^{-k-2}=\left(\frac{z^{-1}}{1-z^{-1}w}\right)^2$ yields the relation 
\begin{equation*}
 [\wt h_j^-(w), \wt h_i^+(z)]=-2d_{ij}\left(\frac{z^{-1}}{1-z^{-1}w}\right)^2   \msc.
\end{equation*}
Combining this with \eqref{D0:hh} and using $\wt h_i(z)=\wt h_i^+(z)+\wt h_i^-(z)$, we obtain 
\begin{equation*}
 [\wt h_j(w),\wt h_i(z)]=2d_{ij}\left(\left(\frac{w^{-1}}{1-w^{-1}z}\right)^2 -\left(\frac{z^{-1}}{1-z^{-1}w}\right)^2 \right)\msc=2d_{ij}\delta_z({z},{w}) \msc.
\end{equation*}
Switching $i\leftrightarrow j$ and $z\leftrightarrow w$ yields \eqref{t:HH}. 

Conversely, taking the $z^{-r-1}w^{-s-1}$ coefficient of \eqref{t:HH} gives 
\begin{equation}\label{t:HH'}
[h_{ir},h_{js}] =2rd_{ij}\delta_{r,-s} \msc.
\end{equation}
Multiplying both sides by $z^{-r-1}w^{-s-1}$ and taking the sum separately over $r,s\geq 0$ and $r,s<0$ gives \eqref{D0:hh}. 

Switching $i$ and $j$ in \eqref{t:HH'}, multiplying both sides by $w^{-r-1}z^{-s-1}$ and taking the sum over $(r,s)\in \Z_{<0}\times \Z_{\geq 0}$ yields 
\begin{equation*}
 [\wt{h}_j^-(w),\wt{h}_i^+(z)]=-2d_{ij}\left(\frac{z^{-1}}{1-z^{-1}w}\right)^2 \msc.
\end{equation*}
Multiplying both sides by $(z-w)^2$ gives \eqref{D0:hh2}.

\noindent \textit{Step 2: } (\eqref{D0:xh+},\eqref{D0:xh-}) $\iff$ \eqref{t:HX}.

Multiplying  \eqref{D0:xh+} by $\sum_{k\geq 0} w^k z^{-k-1}$ and \eqref{D0:xh-} by $\sum_{k\geq 0}z^k w^{-k-1}$ gives 
\begin{equation*}
 [\wt h_i^+(z), x_j^\pm(w)]=\pm \frac{2d_{ij} z^{-1}}{1-z^{-1}w} x_j^\pm(w) \quad \text{ and }\quad [\wt h_i^-(z), x_j^\pm(w)]=\pm\frac{2d_{ij} w^{-1}}{1-w^{-1}z} x_j^\pm(w).
\end{equation*}
Adding these two relations together gives \eqref{t:HX}. Conversely, taking the $z^{-r-1}w^{-s-1}$ coefficient of \eqref{t:HX} yields 
\begin{equation*}
[h_{ir}, x_{js}^\pm]=\pm 2d_{ij}  x_{j,r+s}^\pm.
\end{equation*}
Multiplying both sides of this equality by $(z-w)z^{-r-1}w^{-s-1}$ and taking the sum $r\geq 0$ and $s\in \Z$ gives 
\begin{equation*}
(z-w)[\wt h_i^+(z), x_j^\pm(w)]=\pm 2d_{ij}\! \sum_{r\geq 0, s\in \Z}  x_{j,r+s}^\pm (z^{-r}w^{-s-1}-z^{-r-1}w^{-s})=\pm 2d_{ij} x_j^\pm (w),
\end{equation*}
which is precisely \eqref{D0:xh+}. The proof that \eqref{t:HX} implies \eqref{D0:xh-} is similar. 

\noindent \textit{Step 3: } \eqref{D0:xxh} $\iff$ \eqref{t:X+X-}, and \eqref{D0:xx}$\iff$ \eqref{t:XX}.

The equivalence of \eqref{D0:xx} with \eqref{t:XX} is immediate. To prove the \eqref{D0:xxh} $\iff$ \eqref{t:X+X-}, it suffices to show that 
\begin{equation*}
\lim_{\hbar\to 0}\tfrac{1}{\hbar}\left(\delta({w + \hbar \msc},{z})-\delta({w},{z})\right) =\delta_w({w},{z})\msc=-\delta_z({z},{w})\msc,
\end{equation*}
which can be verified directly.

\begin{NB}
Fixing $r>0$, we have 
\begin{gather*}
 (w+\hbar \msc)^r-w^r=\hbar \msc\sum_{k=0}^{r-1}\binom{r}{k}(\hbar \msc)^{r-k-1} w^k, \\
 (w+\hbar \msc)^{-r}-w^{-r}=-\hbar \msc\sum_{k\geq 1} \binom{r+k-1}{k}(\hbar \msc)^{k-1}w^{-k-r}.
\end{gather*}
The identity above follows readily from these two equalities together with $\delta_w({w},{z})=-\delta_z({z},{w})$. 
\end{NB}

\noindent \textit{Step 4:} \eqref{D0:serre} $\implies$ \eqref{t:Serre}, and (\eqref{t:HX},\eqref{t:XX},\eqref{t:Serre}) $\implies$ \eqref{D0:serre}. 

The first implication is obvious. The second implication is proven in the same way as its $\mfs$-analogue in Proposition \ref{P:s->Yh}. 
\end{proof}
Recall the filtration $\{\mbF_k^D\}_{k\in \Z}$ on $DY^\msc(\mfg)$ defined at the end of Subsection \ref{ssec:defDY}. 
Let $\bar x_{ir}^\pm, \bar h_{ir}$ denote the images of $x_{ir}^\pm,h_{ir}$ in $\mbF_r^D/\mbF_{r-1}^D$ and $\bar \msc$ denote the image of $\msc$ in $\mbF_0^D/\mbF_{-1}^D$.

Similar verifications to those carried out in the proof of the previous proposition allow us to deduce the following analogue of Proposition \ref{P:weakPBW}.
\begin{proposition}\label{P:wPBW-DY}
 The assignment 
 \begin{equation*}
  X_{ir}^\pm \mapsto \bar x_{ir}^\pm, \; H_{ir} \mapsto \bar h_{ir}, \; \mathbf{C}\mapsto  \bar \msc\quad \forall\; i\in I\; \text{ and }\; r\in \Z
 \end{equation*}
 extends to an epimorphism of graded $\mathbb{C}$-algebras $\phi_D:U(\mft)\onto \mathrm{gr}_\Z DY^\msc(\mfg)$. 
\end{proposition}
Like the epimorphism $\phi:U(\mfs)\onto \gr Y(\mfg)$ of Proposition \ref{P:weakPBW}, we expect $\phi_D$ to be an isomorphism for general $\mfg$. However, the injectivity of $\phi_D$ will
not be considered in this paper.  

%%%%%%%%%%%%%%%%%%%%%%%%%%%%%%%%%%%%%%%%%%%%%%%%%%%%%%%%%%%%%%%%%%%%%%%%
% Subsection: From the Yangian to its double
%%%%%%%%%%%%%%%%%%%%%%%%%%%%%%%%%%%%%%%%%%%%%%%%%%%%%%%%%%%%%%%%%%%%%%%%
\subsection{From the Yangian to its double}\label{ssec:Y->DY}
We conclude this section by offering a more precise relation between $Y_\hbar(\mfg)$ and $DY_\hbar^\msc(\mfg)$. Let $\boldsymbol{x}_i^\pm(z)=\sum_{r\geq 0}x_{ir}^\pm z^{-r-1}\in DY_\hbar^\msc(\mfg)[\![z^{-1}]\!]$ for each $i\in I$. 
\begin{proposition}\label{P:iota}
 The assignment 
 \begin{equation}
  \iota_\hbar: x_i^\pm(z) \mapsto \boldsymbol{x}_i^\pm\!\left(z\pm \tfrac{\hbar \msc}{2}\right), \; h_i(z)\mapsto \wt h_i^+(z) \quad \forall\; i\in I \label{iota}
 \end{equation}
 extends to a morphism of $\C[\hbar]$-algebras $\iota_\hbar: Y_\hbar(\mfg)\mapsto DY_\hbar^\msc(\mfg)$. The composition of $\iota_\hbar$ with the projection 
 $DY_\hbar^\msc(\mfg)\onto DY^\msc(\mfg)$ induces a morphism of $\C$-algebras $\iota: Y(\mfg)\to DY^\msc(\mfg)$. 
\end{proposition}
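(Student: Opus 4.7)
The plan is to check that the images of the generators of $Y_\hbar(\mfg)$ under $\iota_\hbar$ satisfy the defining relations of $Y_\hbar(\mfg)$ in their generating-function form from Proposition \ref{P:Y-op}. Thanks to Proposition \ref{P:Yxxh}, it is enough to verify \eqref{Y:hh}, \eqref{Y:xh}, \eqref{Y:xx}, \eqref{Y:serre}, and \eqref{Y:xxh0}. The strategy is uniform: start with the corresponding relation of Lemma \ref{DY':def} in $DY_\hbar^\msc(\mfg)$, substitute $z\mapsto z+\tfrac{\hbar\msc}{2}$ or $w\mapsto w\pm\tfrac{\hbar\msc}{2}$ as dictated by \eqref{iota} (interpreting the shift formally via Taylor expansion in $\hbar\msc/z$ or $\hbar\msc/w$), and then project onto the negative-power subspace in $z$ and $w$ by extracting the $z^{-r-1}w^{-s-1}$ coefficients with $r,s\geq 0$. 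The shifts are well-defined in $DY_\hbar^\msc(\mfg)[\![z^{-1}]\!]$ or $DY_\hbar^\msc(\mfg)[\![z^{\pm 1}]\!]$ as appropriate, with each coefficient a polynomial in $\hbar\msc$.

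The key observation enabling the projection step is that, for $s\geq 0$, the coefficient of $w^{-s-1}$ in the shifted \emph{full} series $x_j^\pm(w\pm\tfrac{\hbar\msc}{2})$ coincides with that of the \emph{positive half} $\boldsymbol{x}_j^\pm(w\pm\tfrac{\hbar\msc}{2})$, because the negative-index contribution $\sum_{m\geq 1}x_{j,-m}^\pm(w\pm\tfrac{\hbar\msc}{2})^{m-1}$ is a polynomial in $w$ of non-negative degree. Relations \eqref{Y:hh} and \eqref{Y:serre} follow at once from \eqref{DY':hh} and \eqref{DY':serre} under the shifts, and \eqref{Y:xx} from \eqref{DY':xx} after simultaneously shifting both $z$ and $w$ (which preserves $z-w$) and then projecting. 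For \eqref{Y:xh}, rewrite \eqref{DY':xh+} using $\msc_{ij}^\pm = \hbar d_{ij} \pm \tfrac{\hbar\msc}{2}$ as
\[
(z-(w\mp\tfrac{\hbar\msc}{2})\mp\hbar d_{ij})\wt h_i^+(z)x_j^\pm(w) - (z-(w\mp\tfrac{\hbar\msc}{2})\pm\hbar d_{ij})x_j^\pm(w)\wt h_i^+(z) = \pm 2d_{ij}\,x_j^\pm(w);
\]
substituting $w\mapsto w\pm\tfrac{\hbar\msc}{2}$ and projecting onto $w^{-s-1}$, $s\geq 0$, yields \eqref{Y:xh}. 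The boundary term $-[h_i(z),x_{j0}^\pm]$ in \eqref{Y:xh} accounts precisely for the absence of a $w^0$ component in $\boldsymbol{x}_j^\pm(w\pm\tfrac{\hbar\msc}{2})$, which is otherwise present in the shifted full series.

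For \eqref{Y:xxh0}, extract the coefficient of $z^{-1}$ from \eqref{DY':xxh}: using $\mathrm{Res}_z\,\delta(a,z)=1$ for any $a$ together with $\mathrm{Res}_z\,\delta(w,z)\wt h_i^-(z)=\wt h_i^-(w)$, we obtain
\[
[x_{i0}^+,x_j^-(w)] = \delta_{ij}\bigl(\wt h_i^+(w+\tfrac{\hbar\msc}{2}) + \wt h_i^-(w)\bigr).
\]
After substituting $w\mapsto w-\tfrac{\hbar\msc}{2}$ and projecting onto negative powers of $w$, the $\wt h_i^-$ contribution disappears (it becomes polynomial in $w$) and we recover $[x_{i0}^+,\boldsymbol{x}_j^-(w-\tfrac{\hbar\msc}{2})]=\delta_{ij}\wt h_j^+(w)$, which is the image of \eqref{Y:xxh0}. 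The descent to $\iota:Y(\mfg)\to DY^\msc(\mfg)$ is then immediate, since $\iota_\hbar$ is $\C[\hbar]$-linear and descends to the quotients modulo $(\hbar-1)$. The only real technical subtlety is the delta-function bookkeeping in \eqref{Y:xxh0}; once that is handled, all remaining verifications amount to routine formal power series manipulations with the shift substitutions.
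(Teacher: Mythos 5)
Your proposal is correct and follows essentially the same strategy as the paper: verify the generating-series relations of Proposition \ref{P:Y-op}, reduce \eqref{Y:xxh} to \eqref{Y:xxh0} via Proposition \ref{P:Yxxh}, and exploit that the non-negative-index ($w^{-s-1}$, $s\geq 0$) coefficients of the shifted full current $x_j^\pm(w\pm\tfrac{\hbar\msc}{2})$ agree with those of the positive half $\boldsymbol{x}_j^\pm(w\pm\tfrac{\hbar\msc}{2})$, with the $w^0$ boundary terms producing $-[h_i(z),x_{j0}^\pm]$ (and its analogue in \eqref{Y:xx}). One small caution: the coefficient of $w^m$ for $m\geq 0$ in the shifted full series is an infinite sum over $x_{j,-n}^\pm$, not a polynomial in $\hbar\msc$ as you claim, so the shifted series lives only in a completion; the paper's proof avoids this by first extracting the $r,s\geq 0$ coefficient relations and re-summing to an identity involving $\boldsymbol{x}_j^\pm(w)$, and only then applying the substitution $w\mapsto w\pm\tfrac{\hbar\msc}{2}$, which is unambiguous on a series with solely negative powers.
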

\begin{proof}
 To distinguish between the generating series of $Y_\hbar(\mfg)$ and $DY_\hbar^\msc(\mfg)$, we will temporarily denote the series $x_i^\pm(z)\in Y_\hbar(\mfg)[\![z^{-1}]\!]$ from Proposition \ref{P:Y-op} by $\mcX_i^\pm(z)$. We will prove that $\iota_\hbar$ preserves the defining relations of $Y_\hbar(\mfg)$ provided by Proposition \ref{P:Y-op}.

 It is immediate that the relations \eqref{DY:hh} and \eqref{DY:serre} imply that 
$\wt h_i^+(z)$ and $\wt{\boldsymbol{x}}_i^\pm(z)=\boldsymbol{x}_i^\pm\!\left(z\pm \tfrac{\hbar \msc}{2}\right)$ satisfy the defining relations \eqref{Y:hh} and \eqref{Y:serre}, respectively, of $Y_\hbar(\mfg)$ (with $h_i(z)$ replaced by $\wt h_i^+(z)$ and $\mcX_i^\pm(z)$ replaced by $\wt{\boldsymbol{x}}_i^\pm(z)$).

Multiplying \eqref{DY:xh+} by $z^{-1}$ and taking the residue at $z=0$ gives $[h_{i0},x_j^\pm(w)]=\pm 2 d_{ij} x_j^\pm(w)$, and thus 
\begin{equation}\label{[h_i,x(w)]}
 [h_{i0},\wt{\boldsymbol{x}}_j^\pm(w)]=\pm 2 d_{ij} \wt{\boldsymbol{x}}_j^\pm(w). 
\end{equation}
Taking instead the $z^{-r-1}w^{-s-1}$ coefficient of \eqref{DY:xh+}, we obtain 
\begin{equation*}
 [h_{i,r+1},x_{js}^\pm]-[h_{ir},x_{j,s+1}^\pm]=\pm(\msc_{ij}^- h_{ir} x_{js}^\pm + \msc_{ij}^+ x_{js}^\pm h_{ir}).
\end{equation*}
Multiplying both sides by $z^{-r-1}w^{-s-1}$, taking the sum over $r,s\geq 0$, we obtain 
\begin{equation*}
 (z-w\mp \msc_{ij}^-)\wt h_i^+(z)\boldsymbol{x}_j^\pm(w)-(z-w\pm \msc_{ij}^+)\boldsymbol{x}_j^{\pm}(w)\wt h_i^+(z)=[h_{i0},\boldsymbol{x}_j^\pm(w)]-[\wt h_i^+(z),x_{j0}^\pm].
\end{equation*}
Substituting in the relation \eqref{[h_i,x(w)]} and applying $w\mapsto w\pm \frac{\hbar \msc}{2}$ yields \eqref{Y:xh}.

The proof that \eqref{DY:xx} implies \eqref{Y:xx} with $\mcX_i^\pm(z)$ and $\mcX_j^\pm(w)$ replaced by $\wt{\boldsymbol{x}}_i^\pm(z)$ and $\wt{\boldsymbol{x}}_j^\pm(w)$, respectively, is similar and  will be omitted. 

It thus remains to see that the assignment \eqref{iota} preserves the relation \eqref{Y:xxh}. By Proposition \ref{P:Yxxh}, it suffices to prove 
\begin{equation}\label{xxh:suf}
[x_{i0}^+,\wt{\boldsymbol{x}}_j^-(w)]=\delta_{ij} \wt h_i^+(w) \quad \forall \; i,j\in I. 
\end{equation}
Taking the residue of \eqref{DY':xxh} at $z=0$ gives
\begin{equation}
 [x_{i0}^+,x_j^-(w)]=\delta_{ij}\left(\wt h_i^+(w+\tfrac{\hbar \msc}{2})+\wt h_i^-(w)\right), \label{eqn:xi0xj-}
\end{equation}
where we have used that $\delta({z},{w})\wt h_i^-(z)=\delta({z},{w})\wt h_i^-(w)$. The relation \eqref{xxh:suf} follows directly from this identity.

The proof is concluded by noting that the second statement of the proposition is an immediate consequence of the first. \qedhere
\end{proof}
Observe that $\iota_\hbar$ (resp.\ $\iota$) is a graded (resp.\ filtered) homomorphism. 
We conjecture that both $\iota_\hbar$ and $\iota$ are injective. 
%

%%%%%%%%%%%%%%%%%%%%%%%%%%%%%%%%%%%%%%%%%%%%%%%%%%%%%%%%%%%%%%%%%%%%%%%%
%%%%%%%%%%%%%%%%%%%%%%%%%%%%%%%%%%%%%%%%%%%%%%%%%%%%%%%%%%%%%%%%%%%%%%%%
% Section: The Lie algebras $\mfs$ and $\mft$ as central extensions.
%%%%%%%%%%%%%%%%%%%%%%%%%%%%%%%%%%%%%%%%%%%%%%%%%%%%%%%%%%%%%%%%%%%%%%%%
%%%%%%%%%%%%%%%%%%%%%%%%%%%%%%%%%%%%%%%%%%%%%%%%%%%%%%%%%%%%%%%%%%%%%%%%
\section{The Lie algebras \texorpdfstring{$\mfs$}{s} and \texorpdfstring{$\mft$}{t} as central extensions}\label{Sec:aff}

In Sections \ref{Sec:Yg} and \ref{Sec:DYg} it was noted that the Lie algebras $\mfs$ and $\mft$ (see Definitions \ref{D:s} and \ref{D:t}) are always extensions of $\mfg^\prime[t]$ and $\mfg^\prime[t^{\pm 1}]$, respectively. In this section we employ the results of \cite[Prop.\ 3.5]{MRY} to deduce that, when $\mfg$ is of untwisted affine type, $\mfs$ and $\mft$ are in fact isomorphic to the universal central extensions of $\mfg^\prime[t]$ and $\mfg^\prime[t^{\pm 1}]$, respectively.  

Let $\mfg_0$ be the underlying finite-dimensional, simple Lie algebra of the untwisted affine Lie algebra $\mfg$. We specify the indexing set $I$ to  be $\{ 0,1,\ldots,\ell\}$, the extending vertex of the Dynkin diagram of $\mfg$ being labeled by $0$.  Let $\mcA$ be a commutative, associative $\C$-algebra. Then $\mfg_0\ot_{\C}\mcA$ is a Lie algebra in a natural way. Denote by $\Omega^1(\mcA)$ the module of K\"{a}hler differentials of $\mcA$, and let $d\mcA$ denote the subspace of exact forms (see, for instance, \cite[\S2]{MRY}).  
\begin{theorem}[\cite{Kl}, Theorem 3.3]\label{T:Kas}
 The Lie algebra $\mfg_0\ot_{\C}\mcA$ admits a universal central extension $\mathfrak{uce}(\mfg_0\ot_{\C}\mcA)$ defined by
 \begin{equation*}
  \mathfrak{uce}(\mfg_0\ot_{\C}\mcA)=(\mfg_0\ot_{\C}\mcA)\oplus \Omega^1(\mcA)/{d \mcA}
 \end{equation*}
 as a vector space, with Lie bracket such that $\Omega^1(\mcA)/{d \mcA}$ is central and 
 \begin{equation*}
  [X_1 \ot a, X_2 \ot b] 
  = [X_1,X_2] \ot ab + (X_1,X_2)\cdot b(da) \quad \forall \, X_1,X_2\in\mfg_0 \; \text{ and } \; a,b\in\mcA. 
 \end{equation*}
\end{theorem}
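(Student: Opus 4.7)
The plan is to follow the classical recipe for producing universal central extensions of perfect Lie algebras, in four steps: (i) verify that the proposed bracket satisfies the Jacobi identity and defines a Lie algebra; (ii) confirm that projection onto $\mfg_0 \ot_\C \mcA$ exhibits it as a central extension; (iii) show that $\mathfrak{uce}(\mfg_0 \ot_\C \mcA)$ is perfect, which is the necessary and sufficient condition for a universal central extension to exist and for the constructed one to have a chance of being universal; (iv) establish the universal property against an arbitrary central extension.

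For step (i), skew-symmetry reduces in the central part to $(X_1,X_2)\cdot b(da) + (X_2,X_1)\cdot a(db) = (X_1,X_2)\cdot d(ab) \in d\mcA$, using symmetry of $(\,,\,)$ and the Leibniz rule. The Jacobi identity for three elements $X_i \otimes a_i$ splits into a Lie part, handled by the Jacobi identity of $\mfg_0$ and associativity of $\mcA$, and a central part $\sum_{\mathrm{cyc}}\,([X_1,X_2],X_3)\cdot a_3\,d(a_1 a_2)$ which vanishes in $\Omega^1(\mcA)/d\mcA$ by cyclic cancellation combined with the invariance property $([X_1,X_2],X_3) = (X_1,[X_2,X_3])$ of the form. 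Step (ii) is immediate from the formula. For step (iii), every $X\ot a$ lies in the derived algebra since $\mfg_0=[\mfg_0,\mfg_0]$; for a generator $b\,da$ of $\Omega^1(\mcA)/d\mcA$, the nondegeneracy of $(\,,\,)$ on the simple algebra $\mfg_0$ lets one pick $X_1,X_2 \in \mfg_0$ with $(X_1,X_2)=1$, and then $b\,da = [X_1\ot a, X_2\ot b] - [X_1,X_2]\ot ab$ is visibly a commutator modulo elements already in the derived algebra.

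The main obstacle is step (iv). Given a central extension $\pi \colon E \twoheadrightarrow \mfg_0 \ot \mcA$ with central kernel $Z$, choose a linear section $s$ and consider the $\C$-bilinear $2$-cocycle $\omega(u,v) = [s(u),s(v)]_E - s([u,v]_{\mfg_0 \ot \mcA})$ valued in $Z$. The key reduction is to show that after modification by a coboundary, $\omega(X\ot a, Y\ot b)$ takes the canonical form $(X,Y)\cdot \beta(a,b)$ for some bilinear $\beta \colon \mcA \ot \mcA \to Z$, with the 2-cocycle relation then forcing the map $b \otimes a \mapsto \beta(a,b)$ to descend to a well-defined $\C$-linear map $\Omega^1(\mcA)/d\mcA \to Z$. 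The step that does the real work is a piece of classical invariant theory: because $\mfg_0$ is simple, the space $(\Sym^2 \mfg_0^\ast)^{\mfg_0}$ of $\mfg_0$-invariant symmetric bilinear forms on $\mfg_0$ is one-dimensional, spanned by $(\,,\,)$; this, together with the standard decomposition of cocycles using the Casimir and the vanishing $H^2(\mfg_0;\C) = 0 = H^1(\mfg_0;\C)$ for semisimple $\mfg_0$, pins down the dependence of $\omega$ on the Lie-algebra factors up to a coboundary. Once $\omega$ is in canonical form, one defines the Lie algebra map $\mathfrak{uce}(\mfg_0 \ot \mcA) \to E$ by $X\ot a \mapsto s(X\ot a)$ and $b\,da \mapsto \beta(a,b)$; its uniqueness follows from perfectness of $\mathfrak{uce}(\mfg_0 \ot \mcA)$ established in step (iii).
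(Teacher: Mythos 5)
The paper does not prove this theorem; it is stated with a citation to Kassel's 1984 paper [Kl, Theorem~3.3], so there is no internal proof to compare against. Your proof is the classical "direct" argument, which differs in flavor from Kassel's: Kassel works homologically, realizing the universal central extension of a perfect Lie algebra $L$ via a quotient of $\Lambda^2 L$ and computing its kernel $H_2(L;\C)$ for $L=\mfg_0\ot\mcA$ by arguments that identify it with $\Omega^1(\mcA)/d\mcA$ (essentially $HC_1(\mcA)$ in later terminology). Your route instead writes down the candidate cocycle, checks it, proves perfectness, and reduces an arbitrary central cocycle to canonical form; both are valid, with yours more elementary and Kassel's more structural. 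Two small remarks on your details. First, in step (i) the central part of the Jacobi sum does not literally "cancel" cyclically: using invariance of the form it factors as $c\bigl(a_3\,d(a_1a_2)+a_1\,d(a_2a_3)+a_2\,d(a_3a_1)\bigr)=2c\,d(a_1a_2a_3)$, which lies in $d\mcA$; that is the right mechanism. Second, in step (iv) the argument that $\omega$ is cohomologous to an invariant cocycle of the shape $(X,Y)\beta(a,b)$ does deserve a few more lines (it uses that $Z$ is a trivial $\mfg_0$-module, Whitehead's lemmas to kill $\omega|_{\mfg_0\ot 1}$, and then the decomposition of $\Lambda^2(\mfg_0\ot\mcA)$ with $(\Lambda^2\mfg_0^*)^{\mfg_0}=0$ and $\dim(\Sym^2\mfg_0^*)^{\mfg_0}=1$); once there, the cocycle identity for $\omega$ does give $\beta(a_1a_2,a_3)=\beta(a_1,a_2a_3)+\beta(a_2,a_1a_3)$ and $\beta(a,1)=0$, which is exactly what is needed for $b\,da\mapsto\beta(a,b)$ to descend to $\Omega^1(\mcA)/d\mcA$, so your claim is correct. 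I would not call any of this a gap — it is a sound sketch of a known alternative proof — but the averaging step is the one place that should not be left implicit in a full write-up.
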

We will be interested in the choices $\mcA=\C[t_1^{\pm 1},t_2]$ and $\mcA=\C[t_1^{\pm 1},t_2^{\pm 1}]$. Set 
\begin{equation*}
 \mfg_0[t_1^{\pm 1},t_2]=\mfg_0\otimes_\C \C[t_1^{\pm 1},t_2]\quad \text{ and }\quad \mfg_0[t_1^{\pm 1},t_2^{\pm 1}]=\mfg_0\otimes_\C \C[t_1^{\pm 1},t_2^{\pm 1}].
\end{equation*}
As in \cite[(3.1)]{MRY}, we let $\mft(\mbA)$ denote the Lie algebra obtained from Definition \ref{D:t} by replacing the defining relation
\eqref{t:XX} with
 \begin{equation}\label{[X(z),X(w)]}
  [X_i^\pm(z),X_i^\pm(w)]=0 \quad \forall\; i\in I.
 \end{equation}
It was proven in \cite{MRY} that, in fact,  $\mft(\mbA)\cong \mathfrak{uce}(\mfg_0[t_1^{\pm 1},t_2^{\pm 1}])$. 
 The following lemma asserts that $\mft(\mbA)$ coincides with $\mft$, and hence that $\mft$ can also be identified with $\mathfrak{uce} (\mfg_0[t_1^{\pm 1},t_2^{\pm 1}])$, as will be stated more precisely in Proposition \ref{P:tuce}.
\begin{lemma}\label{L:min}
Assume that $\mfg$ is a symmetrizable Kac-Moody algebra with indecomposable Cartan matrix $\mbA=(a_{ij})_{i,j\in I}$ satisfying the condition \eqref{A-cond}. Then, in the Lie algebra $\mft$, the relation \eqref{t:XX} implies the relation \eqref{[X(z),X(w)]}.
Conversely, the relations \eqref{t:HX}, \eqref{t:X+X-}, \eqref{t:Serre} and \eqref{[X(z),X(w)]} imply that \eqref{t:XX} holds for all $i,j\in I$. In particular, if $\mfg$ is of untwisted affine type (excluding $A_1^{(1)}$), $\mft\cong \mft(\mbA)$.
\end{lemma}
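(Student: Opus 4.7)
Taking $j=i$ in \eqref{t:XX} and extracting the coefficient of $z^{-r-1}w^{-s-1}$ yields the shift identity $[X_{i,r+1}^\pm,X_{is}^\pm]=[X_{ir}^\pm,X_{i,s+1}^\pm]$ for all $r,s\in\Z$. Iterating this identity reduces every $[X_{ir}^\pm,X_{is}^\pm]$ either to a diagonal bracket $[X_{im}^\pm,X_{im}^\pm]$ (when $r+s$ is even), which vanishes by antisymmetry, or to $[X_{i,m+1}^\pm,X_{im}^\pm]$ (when $r+s$ is odd), which antisymmetry combined with the shift identity forces to equal its own negative. Thus $[X_i^\pm(z),X_i^\pm(w)]=0$, proving \eqref{[X(z),X(w)]}.

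\textbf{Direction 2, preliminary cases.} Since the relation \eqref{t:XX} is symmetric in $(i,j)$, the hypothesis \eqref{A-cond} allows us to assume $|a_{ij}|\le 1$; the case $i=j$ is \eqref{[X(z),X(w)]} itself. When $a_{ij}=0$, the Serre relation reads $[X_{i0}^\pm,X_j^\pm(w)]=0$, and $d_{ij}=0$ forces $[H_{ik},X_j^\pm(w)]=0$ via \eqref{t:HX}. Writing $X_{ik}^\pm=\pm(2d_i)^{-1}[H_{ik},X_{i0}^\pm]$ and applying the Jacobi identity then propagates vanishing to $[X_{ik}^\pm,X_j^\pm(w)]=0$ for every $k\in\Z$, making \eqref{t:XX} immediate.

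\textbf{The essential case $a_{ij}=-1$.} Set $A_{k,s}^\pm=[X_{ik}^\pm,X_{js}^\pm]$. We first upgrade Serre $[X_{i0}^\pm,[X_{i0}^\pm,X_{js}^\pm]]=0$ to the strong shifted Serre identity $[X_{ip}^\pm,A_{k,s}^\pm]=0$ for all $p,k,s\in\Z$: applying $\mathrm{ad}(H_{ik})$ to Serre gives $[X_{ik}^\pm,A_{0,s}^\pm]+[X_{i0}^\pm,A_{k,s}^\pm]=0$, and the symmetry $[X_{i0}^\pm,A_{k,s}^\pm]=[X_{ik}^\pm,A_{0,s}^\pm]$ supplied by \eqref{[X(z),X(w)]} (via Jacobi and $[X_{i0}^\pm,X_{ik}^\pm]=0$) forces $[X_{i0}^\pm,A_{k,s}^\pm]=0$; a further application of $\mathrm{ad}(H_{ip})$ then propagates this to arbitrary $p$. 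Next, we expand the identity $0=[X_{iq}^\mp,[X_{ip}^\pm,A_{k,s}^\pm]]$ via Jacobi: \eqref{t:X+X-} produces $[X_{iq}^\mp,X_{ip}^\pm]=\mp H_{i,p+q}$ up to a central term killed by the outer bracket, while \eqref{t:HX} combined with $[X_{iq}^\mp,X_{js}^\pm]=0$ yields $[X_{iq}^\mp,A_{k,s}^\pm]=\pm d_i X_{j,q+k+s}^\pm$ (using $2d_{ij}=-d_i$). Expanding $[H_{i,p+q},A_{k,s}^\pm]$ by \eqref{t:HX} and collecting terms produces the three-term identity
\[
-2A_{k+p+q,s}^\pm+A_{k,s+p+q}^\pm+A_{p,k+q+s}^\pm=0.
\]
Setting $k=p$ yields $A_{2p+q,s}^\pm=A_{p,p+q+s}^\pm$ for all $p,q,s\in\Z$, forcing $A_{r,s}^\pm$ to depend only on $r+s$; this is precisely \eqref{t:XX} for $i\ne j$. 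The principal difficulty in this step is the careful Jacobi bookkeeping: verifying that the central contributions cancel against the outer bracket, and that the numerical coefficients $(-2,1,1)$ emerge with the correct signs, a feature which ultimately hinges on the specific value $2d_{ij}=-d_i$ afforded by $a_{ij}=-1$.
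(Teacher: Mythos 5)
Your proof is correct and follows essentially the same strategy as the paper's: for Direction 2 you apply $\mathrm{ad}(H)$ to the Serre relation (exploiting the symmetry provided by \eqref{[X(z),X(w)]}) and then $\mathrm{ad}(X^\mp)$ to land on an identity equivalent to \eqref{t:XX}. The paper carries this out at the level of generating series, applying $\mathrm{ad}(H_i(z))$ once to reach $[X_i^\pm(z),[X_{i0}^\pm,X_j^\pm(w)]]=0$, then $\mathrm{ad}(X_{i0}^\mp)$, and finally multiplies by $(z-w)$ to annihilate the $\delta(z,w)$ term; you instead work at the coefficient level, proving the slightly stronger statement $[X_{ip}^\pm,A_{k,s}^\pm]=0$ for all $p,k,s$ (the paper only needs $k=0$) and then deriving the three-term recurrence. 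Both routes lead to the same shift relation and your derivation is valid. One small slip: the auxiliary identity should read $[X_{iq}^\mp,A_{k,s}^\pm]=d_i X_{j,q+k+s}^\pm$ without the leading $\pm$, since $\mp\cdot\pm=-1$ for either choice of sign; as written your formula would introduce a sign inconsistency, but the three-term identity you subsequently display uses the correct coefficient, so the argument as a whole goes through.
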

\begin{proof}
 We first prove the implication
$\eqref{t:XX}\implies \eqref{[X(z),X(w)]}$. The relation \eqref{t:XX} with $i=j$ implies that there is $A_{i}(w)\in \mft[\![w^{\pm 1}]\!]$ such that 
\begin{equation*}
 [X_i^\pm(z),X_i^\pm(w)]=\delta({z},{w})A_i(w).
\end{equation*}
Since the right-hand side is symmetric in $w$ and $z$ and the left-hand side is antisymmetric, both sides must be zero, and hence \eqref{[X(z),X(w)]} holds.

To prove that $(\eqref{t:HX},\eqref{t:X+X-}, \eqref{t:Serre},\eqref{[X(z),X(w)]})\implies \eqref{t:XX}$, we make a few preliminary observations.
By taking the residue at $w=0$ of \eqref{t:HX} and then also of the relation obtained from \eqref{t:HX} by exchanging $z$ and $w$, we arrive at the identity
 \begin{equation}\label{[H(z),X]}
  [H_i(z),X_{j0}^\pm]=\pm 2d_{ij}X_j^\pm (z)=[H_{i0},X_j^\pm(z)]\quad  \forall \; i,j\in I. 
 \end{equation}
Similarly, from \eqref{t:X+X-} we obtain
 \begin{equation}\label{[X+,X-]}
  [X_{i0}^\mp,X_j^\pm(w)]=\mp \delta_{ij} H_i(w) \quad \forall \; i,j\in I. 
 \end{equation}
Now fix $i,j\in I$ with $i\neq j$. If $a_{ij}=0$, then \eqref{t:Serre} is the relation $[X_{i0}^\pm,X_j^\pm(w)]=0$. After applying $\mathrm{ad}(H_i(z))$ to this equation and employing \eqref{[H(z),X]} and \eqref{t:HX}, it becomes 
\begin{equation*}
\pm 2d_{ii} [X_{i}^\pm(z),X_j^\pm(w)]\pm 2d_{ij}\delta({z},{w})[X_{i0}^\pm,X_j^\pm(w)]=\pm 2d_{ii} [X_{i}^\pm(z),X_j^\pm(w)]=0,
\end{equation*}
which gives \eqref{t:XX}. 

If $a_{ij}\neq 0$, then without loss of generality we may assume that $a_{ij}=-1$. The Serre relation \eqref{t:Serre} then reads as 
$[X_{i0}^\pm,[X_{i0}^\pm,X_j^\pm(w)]]=0$. Applying $\mathrm{ad}(H_i(z))$ to both sides of this equation, we find that 
\begin{equation*}
 \pm 4d_{ii}[X_{i}^\pm(z),[X_{i0}^\pm,X_j^\pm(w)]]\pm 2d_{ij}\delta({z},{w})[X_{i0}^\pm,[X_{i0}^\pm,X_j^\pm(w)]]=0,
\end{equation*}
where we have used \eqref{t:HX}, \eqref{[X(z),X(w)]} and \eqref{[H(z),X]}. Hence, we have 
\begin{equation*}
 [X_i^\pm(z),[X_{i0}^\pm,X_j^\pm(w)]]=0. 
\end{equation*}
Acting on this identity by $\mathrm{ad}(X_{i0}^\mp)$ and employing \eqref{t:HX} together with \eqref{[H(z),X]}  and \eqref{[X+,X-]}, we deduce that
\begin{equation*}
 2(d_{ii}+d_{ij})[X_i^\pm(z),X_j^\pm(w)]=-2d_{ij}\delta({z},{w})[X_{i0}^\pm,X_j^\pm(w)]. 
\end{equation*}
By assumption, $-1=a_{ij}=2\frac{d_{ij}}{d_{ii}}$, and hence $d_{ii}\neq -d_{ij}$. Multiplying the above equation by 
$(2d_{ii}+2d_{ij})^{-1}(z-w)$ therefore produces the relation \eqref{t:XX}. \qedhere
\end{proof}
\begin{remark}
The generators $X_{ir}^\pm$, $H_{ir}$ and $\mathbf{C}$ of $\mft$ are related to the generators $x_r(\pm\al_i)$, $\al_i^{\vee}(r)$ and $\mathbf{c}$ of $\mft(\mbA)$ given in \cite[(3.1)]{MRY} by 
\begin{equation*}
 X_{ir}^{\pm}=\pm d_{ii}^{-1/2} x_r(\pm\al_i),\quad H_{ir}=d_{ii}^{-1} \al_i^{\vee}(r)\quad \text{ and }\quad \mathbf{C}=\mathbf{c}.
\end{equation*}
\end{remark}
In order to describe the isomorphism $\mft\cong \mathfrak{uce}(\mfg_0[t_1^{\pm 1},t_2^{\pm 1}])$ and its $\mfs$-analogue, 
we will need a more explicit description of $\Omega^1(\mcA)/d\mcA$ when $\mcA=\C[t_1^{\pm 1},t_2^{\pm 1}]$ or $\C[t_1^{\pm 1},t_2]$. By \cite[\S 2]{MRY},  $\Omega^1(\C[t_1^{\pm 1},t_2^{\pm 1}])/d(\C[t_1^{\pm 1},t_2^{\pm 1}])$ has basis 
\begin{equation*}
 B_\mft=\{ t_1^{-1}dt_1, t_1^{k} t_2^{\ell} dt_1, t_1^k t_2^{-1}dt_2 \, :  \, k\in\Z, \ell\in\Z^{\times} \}.
\end{equation*}
Similarly, one finds that $\Omega^1( \C[t_1^{\pm 1},t_2])/d(\C[t_1^{\pm 1},t_2])$ has basis $B_\mfs\subset B_\mft$ given by
\begin{equation*}
 B_\mfs=\{ t_1^{-1}dt_1, t_1^{k} t_2^{\ell} dt_1 \, :  \, k\in\Z, \ell\in\Z_{>0} \}.
\end{equation*}
Note that these observations, coupled with Theorem \ref{T:Kas}, imply that $\mathfrak{uce}(\mfg_0[t_1^{\pm 1},t_2]) \subset \mathfrak{uce}(\mfg_0[t_1^{\pm 1},t_2^{\pm 1}])$ as a Lie subalgebra.  Let $\{ X_i^{\pm}, H_i \}_{i=1}^{\ell}$ be the Chevalley generators for $\mfg_0$ normalized so that $(X_i^+,X_i^{-}) = 1$ and $H_i = [X_i^+,X_i^-]$. Let $X_{\pm \theta}$ be root vectors of $\mfg_0$ for the roots $\pm\theta$ normalized so that $(X_{\theta},X_{-\theta})=1$, where $\theta$ is the highest root of $\mfg_0$. Set $H_{\theta} = [X_{-\theta}, X_{\theta}]$.
\begin{proposition}[Prop.\ 3.5 of \cite{MRY}]\label{P:tuce}
The assignment $\{X_{ir}^\pm,H_{ir},\mathbf{C}\}_{i\in I,r\in \Z} \to \mathfrak{uce}(\mfg_0[t_1^{\pm 1},t_2^{\pm 1}])$ given by
\begin{align*}
\mbC & \mapsto t_1^{-1}dt_1, \\
X_{ir}^{\pm} & \mapsto X_{i}^{\pm} \ot t_1^r, \; i=1,\ldots,\ell, \\
X_{0r}^{\pm} & \mapsto   X_{\mp\theta} \ot t_1^r  t_2^{\pm 1} , \\
H_{ir} & \mapsto H_i \ot t_1^r, \; i=1,\ldots,\ell, \\
H_{0r} & \mapsto   H_{\theta} \ot t_1^r + t_1^r t_2^{-1} dt_2,
\end{align*}
extends to an isomorphism of Lie algebras $\mft\stackrel{\sim}{\rightarrow} \mathfrak{uce}(\mfg_0[t_1^{\pm 1},t_2^{\pm 1}])$. 
Moreover, we have $\mfs\cong \mathfrak{uce}(\mfg_0[t_1^{\pm 1},t_2])$ with an isomorphism 
$\mfs\stackrel{\sim}{\rightarrow} \mathfrak{uce}(\mfg_0[t_1^{\pm 1},t_2])$ given by the above assignment with $r$ taking values in $\Z_{\geq 0}$ and $\mbC$ omitted.
\end{proposition}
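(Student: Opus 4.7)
The plan is to reduce both isomorphisms to \cite[Prop.~3.5]{MRY} combined with Lemma~\ref{L:min}, the $\mft$-part being essentially immediate and the $\mfs$-part requiring an adaptation to the setting of non-negative $t_2$-powers.

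For $\mft$: Lemma~\ref{L:min} identifies $\mft$ with $\mft(\mbA)$ via the obvious map on generators, and \cite[Prop.~3.5]{MRY} provides the explicit isomorphism $\mft(\mbA)\xrightarrow{\sim}\mathfrak{uce}(\mfg_0[t_1^{\pm 1},t_2^{\pm 1}])$ given by the displayed formulas. The composition yields the claimed isomorphism for $\mft$.

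For $\mfs$, I would proceed in three steps. First, verify that the restricted assignment (with $r\geq 0$ and $\mathbf{C}$ omitted) extends to a Lie algebra morphism $\pi:\mfs\to\mathfrak{uce}(\mfg_0[t_1^{\pm 1},t_2])$. This is formally a sub-verification of the analogous check for $\mft$: every defining relation \eqref{s-HH}--\eqref{s-serre} of $\mfs$ arises as the coefficient of $z^{-r-1}w^{-s-1}$ (for $r,s\geq 0$) of a corresponding relation of $\mft$, with all central terms involving $\mathbf{C}$ vanishing identically in this range, exactly as in Steps 1--3 of the proof of Proposition~\ref{P:mftiso}. Containment of $\pi(\mfs)$ in $\mathfrak{uce}(\mfg_0[t_1^{\pm 1},t_2])$ is immediate from the formulas. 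Second, surjectivity of $\pi$ follows by replaying MRY's explicit bracket computations restricted to non-negative $t_2$-powers: every element of $\mfg_0[t_1^{\pm 1},t_2]$ is produced from the images of the generators via iterated brackets (the Chevalley generators of $\mfg_0$ together with $X_{\pm\theta}$ generate $\mfg_0$), and the central basis $B_\mfs=\{t_1^{-1}dt_1\}\cup\{t_1^kt_2^\ell dt_1:k\in\Z,\ell>0\}$ is produced from the same brackets MRY use to obtain $B_\mft$, with no auxiliaries having $\ell<0$ being required.

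The main obstacle is injectivity of $\pi$, and I would establish it via the universal property of $\mathfrak{uce}(\mfg_0[t_1^{\pm 1},t_2])$. Both $\mfg_0[t_1^{\pm 1},t_2]$ and $\mfs$ are perfect: for $\mfs$, this follows from $X_{ir}^\pm=\pm\tfrac{1}{2d_{ii}}[H_{i0},X_{ir}^\pm]$ (by \eqref{s-HX}) and $H_{ir}=[X_{i0}^+,X_{ir}^-]$ (by \eqref{s-X+X-}). Composing the surjection $\mfs\twoheadrightarrow\mfg^\prime[t]$ from \eqref{s->g[t]} with the natural projection $\mfg^\prime[t]\twoheadrightarrow\mfg_0[t_1^{\pm 1},t_2]$ (killing $\C K[t_2]$ via $\mfg^\prime\cong\mfg_0[t_1^{\pm 1}]\oplus\C K$) produces a surjection $p:\mfs\twoheadrightarrow\mfg_0[t_1^{\pm 1},t_2]$. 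The most delicate sub-step is verifying that $\ker p$ is central in $\mfs$: one identifies an explicit lift $\wt{K}_r\in\mfs$ of $K\otimes t_2^r$ as a linear combination of the $H_{ir}$'s dictated by the expansion of $K$ in terms of the $h_i$'s, and checks centrality using \eqref{s-HH}, \eqref{s-HX}, and the fact that the corresponding coefficient vector is a null vector of the affine Cartan matrix. Universality then yields a Lie morphism $\sigma:\mathfrak{uce}(\mfg_0[t_1^{\pm 1},t_2])\to\mfs$ lifting the identity on $\mfg_0[t_1^{\pm 1},t_2]$; this $\sigma$ is surjective because $\mfs$ is perfect. Since both $\pi\circ\sigma$ and the identity on $\mathfrak{uce}(\mfg_0[t_1^{\pm 1},t_2])$ lift the identity on $\mfg_0[t_1^{\pm 1},t_2]$, uniqueness in the universal property forces $\pi\circ\sigma=\mathrm{id}$, whence $\sigma$ is an isomorphism with inverse $\pi$.
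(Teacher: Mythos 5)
Your proof of the $\mft$-part (compose Lemma~\ref{L:min} with \cite[Prop.~3.5]{MRY}) is exactly what the paper does. For the $\mfs$-part the paper simply asserts in the subsequent remark that one argues "in the same way" as MRY, so your more explicit three-step reduction is a legitimate reconstruction; Steps~1 and~2 are sound, and the overall strategy (universal property of the universal central extension, plus perfectness of $\mfs$) is a reasonable way to organize the injectivity.

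However, there is a genuine gap in your treatment of the claim that $\ker p$ is central in $\mfs$, which you yourself flag as the delicate sub-step. You factor $p$ as $\mfs\twoheadrightarrow\mfg^{\prime}[t]\twoheadrightarrow\mfg_0[t_1^{\pm 1},t_2]$ and write down lifts $\wt K_r=\sum_i a_i^\vee H_{ir}$ of $K\otimes t_2^r$, checking their centrality via the null-vector property of the affine Cartan matrix. This is correct, but it only accounts for the $\C[t_2]K$-part of $\ker p$. Since $\ker p = \ker p_1 + \operatorname{span}\{\wt K_r\}$ where $p_1\colon\mfs\twoheadrightarrow\mfg^{\prime}[t]$ is the map \eqref{s->g[t]}, you still need $\ker p_1$ to be central — and for affine $\mfg$ this kernel is known to be nonzero (the paper remarks after \eqref{s->g[t]} that $p_1$ is an isomorphism only in the finite-dimensional case), precisely because $\mfs$ should be a nontrivial central extension of $\mfg^{\prime}[t]$. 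Asserting centrality of $\ker p_1$ is therefore essentially equivalent to the statement you are trying to prove, so at this point the argument is circular. Closing the gap requires a separate input, e.g.\ showing that $p_1$ restricted to the positive and negative "nilpotent" pieces $\mfs^{\pm}$ (generated by $X_{ir}^{\pm}$) is injective (a Garland/MRY-type argument), so that $\ker p_1$ lies in the weight-zero part of $\mfs$, and then verifying centrality there; this is the content MRY actually supply and what the paper is implicitly deferring to.
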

\begin{remark}
Although the second part of the above proposition (concerning $\mfs\cong\mathfrak{uce}(\mfg_0[t_1^{\pm 1},t_2])$)  was not stated in \cite[Prop.\ 3.5]{MRY}, it can be proven in the same way as the first part. 
\end{remark}
\begin{corollary}\label{C:s->t}
Assume that $\mfg$ is of untwisted affine type (excluding $A_1^{(1)}$). Then the natural morphism 
\begin{equation*}
 \mfs\to \mft,\quad X_{ir}^\pm \mapsto X_{ir}^\pm, \; H_{ir}\mapsto  H_{ir}\quad \forall \; i\in I\; \text{ and }\;r\geq 0
\end{equation*}
is an embedding of Lie algebras.
\end{corollary}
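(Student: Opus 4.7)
The plan is two-step: first, verify that the assignment in the statement genuinely defines a Lie algebra homomorphism $\mfs \to \mft$; second, use Proposition \ref{P:tuce} to identify this map with the canonical embedding of universal central extensions, from which injectivity is immediate.

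For well-definedness, the first step is to check that each defining relation of $\mfs$ (Definition \ref{D:s}) is satisfied by the images $X_{ir}^\pm, H_{ir} \in \mft$ (for $r, s \geq 0$). The Serre relations are identical in both algebras; relation \eqref{s-xx} is precisely the coefficient of $z^{-r-2} w^{-s-1}$ in \eqref{t:XX}; and \eqref{s-HX} is obtained by extracting the coefficient of $z^{-r-1} w^{-s-1}$ from \eqref{t:HX}, since $\delta(z,w)X_j^\pm(w)$ contributes $X_{j,r+s}^\pm$. The only potential wrinkle is the central contributions $\delta_z(z,w)\mathbf{C}$ appearing in \eqref{t:HH} and \eqref{t:X+X-}; however, the coefficient of $\delta_z(z,w)$ at $z^{-r-1}w^{-s-1}$ is a scalar multiple of $r\,\delta_{r,-s}$ and therefore vanishes as soon as $r, s \geq 0$. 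Hence \eqref{s-HH} and \eqref{s-X+X-} also hold in $\mft$ among the images.

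Given well-definedness, injectivity follows by invoking Proposition \ref{P:tuce}: under the standing hypothesis on $\mfg$, there are Lie algebra isomorphisms
\[
\mfs \xrightarrow{\sim} \mathfrak{uce}(\mfg_0[t_1^{\pm 1}, t_2]) \quad \text{and} \quad \mft \xrightarrow{\sim} \mathfrak{uce}(\mfg_0[t_1^{\pm 1}, t_2^{\pm 1}]).
\]
The two assignments in Proposition \ref{P:tuce} agree verbatim on generators with $r \geq 0$ (both send $X_{ir}^\pm$ to $X_i^\pm \ot t_2^r$ or $X_{\mp\theta}\ot t_1^{\pm 1}t_2^r$, and $H_{ir}$ to $H_i\ot t_2^r$ or $H_\theta\ot t_2^r + t_2^r t_1^{-1}dt_1$), so the natural map $\mfs \to \mft$ fits into a commutative square with the canonical inclusion $\mathfrak{uce}(\mfg_0[t_1^{\pm 1}, t_2]) \hookrightarrow \mathfrak{uce}(\mfg_0[t_1^{\pm 1}, t_2^{\pm 1}])$ highlighted in the remark following Proposition \ref{P:tuce}. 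That inclusion is injective at the level of underlying vector spaces: the tensor summand $\mfg_0 \otimes \C[t_1^{\pm 1}, t_2]$ obviously injects into $\mfg_0 \otimes \C[t_1^{\pm 1}, t_2^{\pm 1}]$, and the containment $B_\mfs \subset B_\mft$ exhibits an injection of the cocycle spaces $\Omega^1(\mcA)/d\mcA$.

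No step presents a serious obstacle: well-definedness is a routine coefficient extraction, and injectivity reduces to a bookkeeping comparison of the explicit bases of the two central extensions. The critical external input is Proposition \ref{P:tuce}, which is precisely why the hypothesis forces untwisted affine type outside $A_1^{(1)}$; relaxing this would demand a substantially different argument for injectivity.
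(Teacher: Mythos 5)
Your proposal is correct and follows the paper's intended argument: the corollary is meant as a direct consequence of Proposition \ref{P:tuce} together with the observation, made just before it, that the containment of bases $B_\mfs\subset B_\mft$ coupled with Theorem \ref{T:Kas} yields a Lie subalgebra inclusion $\mathfrak{uce}(\mfg_0[t_1^{\pm 1},t_2])\hookrightarrow\mathfrak{uce}(\mfg_0[t_1^{\pm 1},t_2^{\pm 1}])$, and the two isomorphisms in Proposition \ref{P:tuce} agree on generators. Your preliminary check of well-definedness via coefficient extraction (noting the central $\delta_z$ terms vanish on the range $r,s\geq 0$) is a sensible addition that the paper leaves implicit.
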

Due to the following proposition, it is also possible to interpret $\mfs$ and $\mft$ as universal central extensions of $\mfg^{\prime}[t]$ and $\mfg^{\prime}[t^{\pm 1}]$, respectively.
\begin{proposition}\label{P:g'[t]}
We have isomorphisms of Lie algebras 
\begin{equation*}
 \mathfrak{uce}(\mfg_0[t_1^{\pm 1},t_2]) \cong \mathfrak{uce}(\mfg^{\prime}[t]) \quad \text{ and }\quad \mathfrak{uce}(\mfg_0[t_1^{\pm 1},t_2^{\pm 1}]) \cong \mathfrak{uce}(\mfg^{\prime}[t^{\pm 1}]).
\end{equation*}
\end{proposition}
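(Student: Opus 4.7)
The plan is to invoke the following standard principle: if $\mfp \onto \mfh$ is a central extension of perfect Lie algebras, then the canonical lift $\mathfrak{uce}(\mfh) \to \mfp$ exhibits $\mathfrak{uce}(\mfp) \cong \mathfrak{uce}(\mfh)$. Given this, each of the two claimed isomorphisms reduces to producing an appropriate central extension. Since $\mfg$ is untwisted affine, $\mfg^\prime = (\mfg_0 \ot \C[t_1^{\pm 1}]) \oplus \C K$ with the standard affine bracket, and $K$ is central. Tensoring with $\C[t]$ (resp.\ $\C[t^{\pm 1}]$) and identifying $t$ with $t_2$ yields the short exact sequences
\begin{equation*}
0 \to \C[t_2] K \to \mfg^\prime[t] \to \mfg_0[t_1^{\pm 1},t_2] \to 0, \quad 0 \to \C[t_2^{\pm 1}] K \to \mfg^\prime[t^{\pm 1}] \to \mfg_0[t_1^{\pm 1},t_2^{\pm 1}] \to 0,
\end{equation*}
in which the kernel is central because $K$ is central in $\mfg^\prime$. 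All four Lie algebras involved are perfect: $\mfg^\prime$ is perfect, as every $X \ot t_1^m$ is a bracket of $\mfg_0$-translates (simplicity of $\mfg_0$) and $K$ is expressible as $[X_{-\theta},X_\theta] - [X_{-\theta}\ot t_1^{-1}, X_\theta \ot t_1]$; and perfectness is preserved under tensoring with any commutative $\C$-algebra via the identity $X \ot f = \sum_i [X_i \ot f, Y_i \ot 1]$ for $X = \sum_i [X_i,Y_i]$.

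Let $L := \mathfrak{uce}(\mfg_0[t_1^{\pm 1}, t_2])$; the Laurent case is identical. The universal property of $L$ applied to the central extension $\mfg^\prime[t] \onto \mfg_0[t_1^{\pm 1},t_2]$ produces a unique Lie algebra map $\phi: L \to \mfg^\prime[t]$ lifting the canonical projection. Because $\phi(L)$ surjects onto $\mfg_0[t_1^{\pm 1},t_2]$, we have $\phi(L) + \C[t_2]K = \mfg^\prime[t]$. Using that $L$ is perfect and $\C[t_2]K$ is central,
\begin{equation*}
\phi(L) = [\phi(L),\phi(L)] = [\phi(L)+\C[t_2]K,\, \phi(L)+\C[t_2]K] = [\mfg^\prime[t], \mfg^\prime[t]] = \mfg^\prime[t],
\end{equation*}
so $\phi$ is surjective; its kernel sits inside the central kernel of $L \to \mfg_0[t_1^{\pm 1},t_2]$ and is therefore central in $L$. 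Hence $\phi$ is a central extension of $\mfg^\prime[t]$.

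To conclude, I invoke the criterion (due essentially to Garland) that a central extension of a perfect Lie algebra is universal if and only if the total space is itself perfect and admits no nontrivial central extensions. Both conditions hold for $L$: it is perfect since $L$ is a universal central extension, and $\mathfrak{uce}(L) = L$ (equivalently $H_2(L,\C) = 0$) by the same reason. Therefore $\phi$ realizes $L$ as $\mathfrak{uce}(\mfg^\prime[t])$, proving the first isomorphism; the identical argument with $t_2^{\pm 1}$ in place of $t_2$ handles the second. The only real subtlety is verifying surjectivity of $\phi$, for which the perfectness of all Lie algebras involved is essential; once that is in hand, the rest is a formal consequence of the universal property, supplemented by Kassel's theorem (Theorem \ref{T:Kas}) as the underlying explicit model.
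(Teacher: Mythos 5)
Your proof is correct, and it rests on the same key observation as the paper's: the affine decomposition $\mfg^\prime \cong \mfg_0[t_1^{\pm 1}] \oplus \C K$ makes $\mfg^\prime[t] \onto \mfg_0[t_1^{\pm 1}, t_2]$ a central extension of perfect Lie algebras, so the result follows from the ``tower lemma'' for universal central extensions. The paper applies this lemma in the form of \cite[Cor.\ 1.9]{Ne} directly to $\mathfrak{uce}(\mfg^\prime[t])$, concluding in two lines that the composite $\mathfrak{uce}(\mfg^\prime[t]) \onto \mfg^\prime[t] \onto \mfg_0[t_1^{\pm 1},t_2]$ is itself a universal central extension. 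You instead run the argument in the opposite direction: starting from $L=\mathfrak{uce}(\mfg_0[t_1^{\pm1},t_2])$, you use the universal property to lift its structural projection over $\mfg^\prime[t] \onto \mfg_0[t_1^{\pm 1},t_2]$, verify that the lift $\phi$ is a surjective central extension via perfectness and centrality of $\C[t_2]K$, and then appeal to the Garland-type characterization (a central extension is universal iff its total space is perfect and centrally closed) to conclude $\phi$ is universal. What you gain is essentially a self-contained proof of the cited corollary specialized to this situation, including the nice explicit verification that $K$ (and hence $\mfg^\prime$) is perfect; what you still rely on without proof is the Garland criterion itself, together with the facts that a universal central extension is perfect and centrally closed—roughly the same black boxes the paper draws from \cite{Ne}, repackaged. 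In short, the two proofs trace dual paths through the same diagram and invoke the same underlying structure theory, yours being more expanded and the paper's more compact.
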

\begin{proof}
We begin by noting that, since $\mfg^{\prime}[t]$ and $\mfg^{\prime}[t^{\pm 1}]$ are perfect Lie algebras because $\mfg^{\prime}$ is perfect, the universal central extensions $\mathfrak{uce}(\mfg^{\prime}[t])$  and $\mathfrak{uce}(\mfg^{\prime}[t^{\pm 1}])$ do in fact exist (see \cite[Thm.\ 1.14]{Ne}). 

Since $\mfg$ is an untwisted affine Lie algebra, $\mfg^\prime \cong \mfg_0[t_1^{\pm 1}]\oplus \C K$ with Lie bracket determined by $[K,\mfg^\prime]=0$ and 
\begin{equation*}
 [X_1\ot t_1^r,X_2\ot t_1^s]=[X_1,X_2]\otimes t_1^{r+s}+r\delta_{r,-s}(X_1,X_2)K  
\end{equation*}
for all $X_1,X_2\in \mfg_0$ and $r,s\in \Z$. It follows that $\mfg^\prime[t_2]\cong \mfg_0[t_1^{\pm 1},t_2]\oplus \C[t_2]K$ is a central extension of $\mfg_0[t_1^{\pm 1},t_2]$ with natural projection $\pi:\mfg^\prime[t_2]\onto \mfg_0[t_1^{\pm 1},t_2]$. 
Let $\psi$ denote the projection $\mathfrak{uce}(\mfg^{\prime}[t_2])\onto \mfg^\prime[t_2]$. Then,
by \cite[Cor.\ 1.9]{Ne}, $\mathfrak{uce}(\mfg^{\prime}[t_2])$ is a universal central extension of $\mfg_0[t_1^{\pm 1},t_2]$ with projection $\pi\circ \psi:\mathfrak{uce}(\mfg^{\prime}[t_2])\onto\mfg_0[t_1^{\pm 1},t_2]$. This proves that  $\mathfrak{uce}(\mfg_0[t_1^{\pm 1},t_2]) \cong \mathfrak{uce}(\mfg^{\prime}[t_2])$. Replacing $t_2$ by $t_2^{\pm 1}$, we obtain instead $\mathfrak{uce}(\mfg_0[t_1^{\pm 1},t_2^{\pm 1}]) \cong \mathfrak{uce}(\mfg^{\prime}[t_2^{\pm 1}])$. \qedhere
\end{proof}

%%%%%%%%%%%%%%%%%%%%%%%%%%%%%%%%%%%%%%%%%%%%%%%%%%%%%%%%%%%%%%%%%%%%%%%%
%%%%%%%%%%%%%%%%%%%%%%%%%%%%%%%%%%%%%%%%%%%%%%%%%%%%%%%%%%%%%%%%%%%%%%%%
% Section: Level one vertex representations
%%%%%%%%%%%%%%%%%%%%%%%%%%%%%%%%%%%%%%%%%%%%%%%%%%%%%%%%%%%%%%%%%%%%%%%%
%%%%%%%%%%%%%%%%%%%%%%%%%%%%%%%%%%%%%%%%%%%%%%%%%%%%%%%%%%%%%%%%%%%%%%%%
\section{Level one vertex representations}\label{Sec:Ver}

We now fix $\mfg$ to be a simply laced Kac-Moody algebra, and we let $Q=\bigoplus_{i\in I} \Z\al_i$ denote the root lattice associated to $\mfg$. In addition, we normalize the invariant form $(\,,\,)$ so that $(\al_i,\al_i)=2$ for all $i\in I$. 

In this section, we construct representations of $DY^{\msc}_{\hbar}(\mfg)$ and $DY^\msc(\mfg)$ which are given by vertex operators and which factor through $DY_\hbar^1(\mfg)$ and $DY^1(\mfg)$. The main results pertaining to this construction are given in Subsections \ref{ssec:V[[h]]} and \ref{ssec:wtV}.

The vertex operators which define these representations are themselves built from operators arising from the action of a Heisenberg Lie algebra on its Fock space representation. Accordingly, we begin by introducing the appropriate Heisenberg algebra, its Fock space representation, as well as the auxiliary operators which play a central role in our construction.

\begin{definition}\label{D:h}
The Heisenberg algebra $\mfH$ is the Lie algebra with basis given by the elements $\mcH_{ir}, \mcC$ for $i\in I, r\in\Z\setminus\{ 0 \}$ and with the bracket given by
\[ 
[\mcH_{ir}, \mcC]=0, \; \forall \, i\in I, \; \forall \, r\in \Z\setminus\{ 0 \}, 
\quad [\mcH_{ir},\mcH_{j,-s}] =  r \delta_{rs} \delta_{ij} \mcC, \; \forall \, i,j\in I,  \; \forall \, r,s \in \Z\setminus\{ 0 \}.
\]
\end{definition}
\begin{remark} % Rem 2.3
This is not the usual definition of the Heisenberg algebra associated to $Q$ (see Definition \ref{D:h_A}): rather, it is the Heisenberg algebra associated to the trivial lattice $\Z^{|I|}$.
\end{remark}
 The polynomial ring $\C[\mcH_{i,-r}]_{i\in I,r>0}$ can be equipped with the structure of an $\mfH$-module by defining
\begin{equation*}
\mcH_{j,-s}(f)=\mcH_{j,-s}f,\quad \mcC(f)=f,\quad \mcH_{js}(f)= s\frac{\partial}{\partial \mcH_{j,-s}}(f) \quad \forall\; f\in  \C[\mcH_{i,-r}]_{i\in I,r>0},\; j\in I\; \text{ and }\; s>0,
\end{equation*}
yielding the so-called \textit{Fock space} representation of $\mfH$.

Next, fix a bimultiplicative function $\veps:Q\times Q\to \Z/2\Z=\{\pm 1\}$ satisfying the condition 
\begin{equation}
 \veps(\alpha,\alpha)=(-1)^{\frac{1}{2}(\alpha,\alpha)} \quad \forall \; \alpha\in Q. \label{coc:1}
\end{equation}
The bimultiplicativity of $\veps$ implies that $\veps(\alpha,0)=1$ for all $\alpha\in Q$, while \eqref{coc:1} implies that 
\begin{equation}
\veps(\alpha,\beta)=(-1)^{(\al,\beta)}\veps(\beta,\alpha) \quad \forall \; \alpha,\beta\in Q. \label{coc:2}
\end{equation}
Using $\veps(0,\beta)=1=\veps(\alpha,0)$, one can also see that
\begin{equation}
\veps(\pm \al,\mp \beta)=\veps(\al,\beta)=\veps(\pm \al,\pm \beta)\quad \forall \; \alpha,\beta\in Q. \label{coc:3}
\end{equation}
The bimultiplicativity of $\veps$ also implies that it is a 2-cocycle of $Q$ with values in $\Z/2\Z$, and thus it determines a central extension $\wt Q=\Z/2\Z\times_\veps Q$ of $Q$ by $\Z/2\Z$ which is equal to $\Z/2\Z \times Q$ as a set, and has product  
\begin{equation*}
 (\eps_a,\alpha)(\eps_b,\beta)=(\veps(\alpha,\beta)\eps_a\eps_b,\alpha+\beta) \quad \forall \alpha,\beta\in Q \; \text{ and }\; \eps_a,\eps_b\in \Z/2\Z.
\end{equation*}
\begin{definition}\label{def:Ce[Q]}
Let $\mcI$ be the two-sided ideal of the group algebra $\C[\wt Q]$ which is spanned by $e^{(\eps_a,\alpha)} -  \eps_a e^{(1,\alpha)}$ for all $\al\in Q$ and $\eps_a\in \Z/2\Z$, where $\{e^{(\epsilon_a,\alpha)}\,:\,(\epsilon_a,\alpha)\in \wt Q\}$ is the standard basis of $\C[\wt Q]$.   The twisted group algebra $\C_\veps[Q]$ is defined to be the quotient $\C[\wt Q]/\mcI$. 
\end{definition}
Since the $\C$-linear projection $\C[\wt Q]\onto \C[Q]$, $e^{(\eps_a,\alpha)}\mapsto \eps_a e^\alpha$ induces an isomorphism of vector spaces $\C_\veps[Q]\to \C[Q]$, $\C_\veps[Q]$ can be equivalently defined as the $\C$-algebra with basis $\{e^\alpha\}_{\alpha\in Q}$ and multiplication given by 
 \begin{equation*}
  e^\alpha\cdot e^\beta=\veps(\alpha,\beta)e^{\alpha+\beta} \quad \forall \; \al,\beta\in Q. 
 \end{equation*}
\begin{remark}\leavevmode
\begin{enumerate}[label=(\alph*)]
\item By \cite[Prop.\ 5.2.3]{FLM}, the condition \eqref{coc:2} determines $\veps$ up to equivalence of cocycles, and hence it determines the central extension $\wt Q$ of $Q$ by $\Z/2\Z$ up to isomorphism. In particular, this implies that any two bimultiplicative functions $\veps,\veps^\prime$ satisfying \eqref{coc:1} will determine the same twisted group algebra up to isomorphism. 
\item The existence of $\veps:Q\times Q\to \Z/2\Z$ satisfying \eqref{coc:1} can be established in various ways: see for instance \cite[\S 7.8]{Ka}. 
\end{enumerate}
\end{remark}
Viewing $\C_\veps[Q]$ as a left-module over itself, we can form the $U(\mfH)\ot \C_\eps[Q]$-module 
\begin{equation}
 \mcV=\C[\mcH_{i,-r}]_{i\in I,r>0}\ot \C_\veps[Q] \label{V}.
\end{equation}
 We also define an auxiliary family of operators $\{\partial_\alpha\}_{\alpha\in Q}\subset \End_\C\mcV$ by
\begin{equation*}
\partial_\alpha(f\ot e^\beta)=(\alpha,\beta) f\ot e^\beta \quad \forall \; f\in \C[\mcH_{i,-r}]_{i\in I,r>0}\; \text{ and }\; \alpha,\beta\in Q.  
\end{equation*}
%

%%%%%%%%%%%%%%%%%%%%%%%%%%%%%%%%%%%%%%%%%%%%%%%%%%%%%%%%%%%%%%%%%%%%%%%%
% Subsection: The $DY_\hbar^\msc(\mfg)$-module $\mcV[\![\hbar]\!]$
%%%%%%%%%%%%%%%%%%%%%%%%%%%%%%%%%%%%%%%%%%%%%%%%%%%%%%%%%%%%%%%%%%%%%%%%

\subsection{The \texorpdfstring{$DY_\hbar^\msc(\mfg)$}{}-module \texorpdfstring{$\mcV[\![\hbar]\!]$}{}}\label{ssec:V[[h]]}

We first construct a vertex representation of $DY_\hbar^\msc(\mfg)$ on the topologically free $\C[\![\hbar]\!]$-module $\mcV[\![\hbar]\!]$. The actions of $U(\mfH)\ot \C_\veps[Q]$ and of $\partial_{\alpha}$ on $\mcV$ defined above naturally extend to $\mcV[\![\hbar]\!]$. 

For each $i\in I$, let $N(i)$ denote the set of vertices to which $i$ is connected, i.e. the set of \textit{neighbours} of the vertex $i$. Define $A_i^\pm(z)$ and $B_i^\pm(z)$, for each $i\in I$, by 
\begin{gather*}
 A_i^\pm(z)=\exp\!\left(\pm \sum_{r>0}\frac{\mcH_{i,-r}}{r}(z^r+(z\mp\hbar)^r)\mp \sum_{r>0}\sum_{j\in N(i)} \frac{\mcH_{j,-r}}{r}\left(z\mp\tfrac{\hbar}{2}\right)^r \right),\\
 B_i^\pm(z)=\exp\!\left( \mp \sum_{r>0}\frac{\mcH_{ir}}{r}z^{-r}\right).
\end{gather*}
Inspired by \cite{Io}, we define the vertex operators $\mbX_i^\pm(z),\mbH_i^\pm(z)\in (\End_{\C[\![\hbar]\!]}\mcV[\![\hbar]\!])[\![z^{\pm 1}]\!]$, for each $i\in I$, by 
\begin{align}
\mbX_i^\pm(z)= {} & \pm A_i^\pm(z)B_i^\pm(z)e^{\pm \al_i}z^{\partial_{\pm \al_i}}, \label{Xpm:exp} \\
\mbH_i^+(z)= {} & B_i^+(z+\tfrac{\hbar}{2})B_i^-(z-\tfrac{\hbar}{2})\left(\frac{1+\frac{\hbar}{2}z^{-1}}{1-\frac{\hbar}{2}z^{-1}} \right)^{\partial_{\al_i}}, \label{Hp:exp}
\\
\mbH_i^-(z)= {} & A_i^+(z)A_i^-(z). \label{Hm:exp}
\end{align}
where, for each $\alpha\in Q$, $z^{\partial_\al}\in (\End_{\C[\![\hbar]\!]}\mcV[\![\hbar]\!])[\![z^{\pm 1}]\!]$ is defined on $\mcV$ by 
\[ z^{\partial_\al}(f\ot e^\beta)=z^{(\al,\beta)}f\ot e^\beta \quad \forall\; \beta\in Q \;\text{ and }\; f\in \C[\mcH_{i,-r}]_{i\in I,r>0}. \]

Equivalently, $z^{\partial_{\al}}=\sum_{k\in \Z} P_{\al,k} z^k$ with $P_{\al,k}(f\ot e^\beta)=\delta_{k,(\al,\beta)}f\ot e^\beta$.

Let us explain why $\left(\tfrac{1+\frac{\hbar}{2}z^{-1}}{1-\frac{\hbar}{2}z^{-1}}\right)^{\partial_{\al_i}}$, and thus $\mbH_i^+(z)$, belongs to $(\End_{\C[\![\hbar]\!]}\mcV[\![\hbar]\!])[\![z^{-1}]\!]$. For each invertible series $g(z)\in (\C[\hbar])[\![z^{-1}]\!]$, the operator $g(z)^{\partial_\al}$ (defined on $\mcV$ by $g(z)^{\partial_\al}(f\ot e^\beta)=g(z)^{(\al,\beta)}f\ot e^\beta$) can be viewed as an element of $(\End_{\C[\![\hbar]\!]}\mcV[\![\hbar]\!])[\![z^{-1}]\!]$. To see this, first write
\begin{equation*}
 \sum_{k\in \Z} P_{\al,k}g(z)^k=\sum_{k\geq 0} P_{\al,k} g(z)^k +\sum_{k>0}P_{\al,-k}(g(z)^{-1})^k.
\end{equation*}
For each $r\geq 0$, the $z^{-r}$ coefficient of $\sum_{k\geq 0} P_{\al,k} g(z)^k$ is an infinite sum of the form $\sum_{k\geq 0}a_k(\hbar)P_{\al,k}$ with 
$a_k(\hbar)\in \C[\hbar]$. The sum  $\sum_{k\geq 0}a_k(\hbar)P_{\al,k}$ is a well-defined element of $\End_{\C[\![\hbar]\!]}\mcV[\![\hbar]\!]$ since, for any fixed $\beta\in Q$, $P_{\al,k}(e^\beta)=0$ for all but at most one value of $k$. This implies that $\sum_{k\geq 0} P_{\al,k} g(z)^k\in (\End_{\C[\![\hbar]\!]}\mcV[\![\hbar]\!])[\![z^{-1}]\!]$. The same reasoning can be applied to $\sum_{k>0}P_{\al,-k}(g(z)^{-1})^k$.

With the vertex operators $\{\mbX_i^\pm(z), \mbH_i^\pm(z)\}_{i\in I}$ at our disposal, we can now state the main theorem of this section. 
\begin{theorem} \label{thm:Vrep}
The assignment 
\begin{equation}
 x_i^\pm(z) \mapsto \mbX_i^\pm(z), \; h_i^\pm(z) \mapsto \mbH_i^\pm(z) \quad \forall \; i\in I, \;  \msc \mapsto 1 \label{Vrep}
\end{equation}
extends to a homomorphism of $\C[\hbar]$-algebras $\rho_\hbar:DY_\hbar^\msc(\mfg)\to \End_{\C[\![\hbar]\!]}\mcV[\![\hbar]\!]$.
\end{theorem}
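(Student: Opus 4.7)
The strategy is to verify that the vertex operators $\mbX_i^\pm(z), \mbH_i^\pm(z)$ satisfy each defining relation of $DY_\hbar^\msc(\mfg)$ listed in Definition \ref{DY:def} after specializing $\msc$ to $1$. The relation \eqref{DY:c} is trivial since $\msc$ acts as the scalar $1$, and \eqref{DY:hh} follows from inspecting $\mbH_i^\pm(z)$: $\mbH_i^+(z)$ is a function of the positive Heisenberg modes $\mcH_{j,r}$ and the mutually commuting diagonal operators $\partial_{\alpha_j}$ alone, while $\mbH_i^-(z)$ involves only the negative modes $\mcH_{j,-r}$, so both halves commute within themselves.

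The remaining rational-function relations \eqref{DY:hhh}--\eqref{DY:xx} reduce to algebraic manipulations of normal-ordered products using the identity $e^X e^Y = e^{[X,Y]} e^Y e^X$, valid whenever $[X,Y]$ is central. Because $A_i^\pm(z)$ uses only negative $\mcH$-modes while $B_i^\pm(z)$ uses only positive modes, the only non-trivial contractions arise from pairing an $A$-factor with a $B$-factor; the Heisenberg bracket $[\mcH_{jr}, \mcH_{k,-s}] = r\delta_{rs}\delta_{jk}$ converts each such pair into a scalar logarithm that exponentiates to an explicit rational function of $z, w$ and $\hbar$. Combining these contractions with the elementary commutations $z^{\partial_{\pm\alpha_i}} e^{\pm \alpha_j} = z^{\pm 2d_{ij}} e^{\pm\alpha_j} z^{\partial_{\pm\alpha_i}}$ and with the cocycle identity $e^{\pm\alpha_i} e^{\pm \alpha_j} = \veps(\pm\alpha_i, \pm\alpha_j) e^{\pm\alpha_i \pm \alpha_j}$, each of \eqref{DY:hhh}--\eqref{DY:xx} becomes an identity between explicit rational prefactors, which can be verified directly. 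The specific shifts $\pm\hbar/2$ in $\mbH_i^+$ and the presence of $(z \mp \hbar)^r$ alongside $z^r$ in $A_i^\pm$ are precisely what produce the coefficients $z - w \mp \msc_{ij}^\pm$ and $z - w \mp \hbar d_{ij}$ required in those relations.

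The crux of the proof is \eqref{DY:xxh}. Computing $\mbX_i^+(z) \mbX_j^-(w)$ and $\mbX_j^-(w) \mbX_i^+(z)$ by normal ordering yields a common normal-ordered product multiplied by two different formal-region expansions of the same rational prefactor. When $i \neq j$ this prefactor is regular, so the commutator vanishes. When $i = j$, the prefactor has simple poles at $z = w$ and at $z = w + \hbar$, the latter originating from the insertion of $(z - \hbar)^r$ in $A_i^+$. Subtracting the two expansions converts each pole into a delta function via \eqref{eq:deltaexp}, and the standard identity $f(z) \delta(w, z) = f(w) \delta(w, z)$ identifies the remaining normal-ordered factor at each residue as $\mbH_i^-(z)$ or $\mbH_i^+(w + \hbar/2)$, reproducing \eqref{DY:xxh} with $\msc = 1$. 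The Serre relations \eqref{DY:serre} follow from a standard argument in the simply-laced setting: for $i \neq j$ we have $a_{ij} \in \{0, -1\}$, so the OPE $\mbX_i^\pm(z) \mbX_j^\pm(w)$ has at worst a simple pole, and the symmetrization in $z_1, \ldots, z_m$ combined with the fact that $\mbX_i^\pm(z_1) \mbX_i^\pm(z_2)$ vanishes along $z_1 = z_2$ (since $\veps(\alpha_i, \alpha_i) = -1$) eliminates all remaining singularities.

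The main technical obstacle will be the bookkeeping in \eqref{DY:xxh}: the shifts $\pm\hbar/2$ in $\mbH_i^+$ and the simultaneous appearance of $z^r$ and $(z \mp \hbar)^r$ in $A_i^\pm$ are finely tuned to reproduce exactly the residues on the right-hand side, and showing that all four building blocks $A$, $B$, $e^{\pm\alpha_i}$ and $z^{\partial_{\pm\alpha_i}}$ cooperate to produce the required identity demands patient but ultimately mechanical computation.
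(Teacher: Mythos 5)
Your proposal follows the same route as the paper's proof: verify each defining relation of Definition~\ref{DY:def} directly, using normal ordering, the commutation relations of Lemma~\ref{L:AB} to produce explicit rational prefactors, and the delta-function identity~\eqref{delta} to convert the poles in the $[\mbX_i^+(z),\mbX_i^-(w)]$ prefactor into $\mbH_i^+$ and $\mbH_i^-$ terms. Your decomposition of the argument and the identification of where the $\pm\hbar/2$ shifts matter (in $\mbH_i^+$ and in the simultaneous appearance of $z^r$ and $(z\mp\hbar)^r$ inside $A_i^\pm$) is exactly right, and your treatment of \eqref{DY:c}, \eqref{DY:hh}, \eqref{DY:hhh}--\eqref{DY:xx}, and \eqref{DY:xxh} matches the paper's.

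The one place where your sketch is a bit too optimistic is the Serre relation~\eqref{DY:serre} in the $a_{ij}=-1$ case. You invoke the classical Frenkel--Kac argument — the OPE of $\mbX_i^\pm(z)\mbX_j^\pm(w)$ has at worst a simple pole and $\mbX_i^\pm(z_1)\mbX_i^\pm(z_2)$ vanishes along $z_1=z_2$, so symmetrization kills the residues. In the Yangian setting the vanishing locus of $\mbX_i^\pm(z_1)\mbX_i^\pm(z_2)$ is the pair of hyperplanes $z_1=z_2$ and $z_1=z_2\mp\hbar$, while the pole in the inner commutator $[\mbX_i^\pm(z_a),\mbX_j^\pm(w)]$ sits at the shifted location $z_a=w\mp\hbar/2$, and the prefactor carries an additional $(z_1-z_2\pm\hbar)$ that is \emph{not} antisymmetric in $z_1,z_2$. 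This means one cannot simply appeal to the classical residue cancellation; one has to write out the full rational prefactor $f(z_1,z_2,w)$ for the double commutator, decompose it using delta-function identities, and check explicitly that $f(z_1,z_2,w)+f(z_2,z_1,w)=0$, which is what the paper does. Your concluding acknowledgement that this is "patient but ultimately mechanical computation" is exactly the right attitude, but you should be aware the verification genuinely requires the $\hbar$-shifted bookkeeping and is not a verbatim transplant of the classical argument. One small sign slip: the commutation should read $z^{\partial_{\pm\alpha_i}}e^{\pm\alpha_j} = z^{2d_{ij}} e^{\pm\alpha_j} z^{\partial_{\pm\alpha_i}}$ (not $z^{\pm 2d_{ij}}$), since $(\pm\alpha_i,\pm\alpha_j)=(\alpha_i,\alpha_j)=2d_{ij}$ regardless of sign.
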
 
The next lemma will be employed to prove this theorem. Let \begin{equation*}
 \Gamma_i^\pm(z)=\exp\left(\mp \sum_{r>0}\frac{\mcH_{i,\pm r}}{r}z^{\mp r}\right). 
\end{equation*}
\begin{lemma}\label{L:AB}
Let $\chi_i:I\to \{0,1\}$ denote the indicator function of $N(i)$, i.e. $\chi_i(j)=1$ if $j\in N(i)$ and $\chi_i(j)=0$ otherwise. Then, for each pair of indices $i,j\in I$, we have
\begin{gather}
 \Gamma_i^-(z)\Gamma_i^+(w)=\Gamma_i^+(w)\Gamma_i^-(z)\left(1-\frac{z}{w}\right)^{-1},  \label{Comm:1} \\
 [A_i^\pm(z),A_j^\pm(w)]=[A_i^\pm(z),A_j^\mp(w)]=0=[B_i^\pm(z),B_j^\mp(w)]=[B_i^\pm(z),B_j^\pm(w)],\label{AABB} \\
 B_i^\pm(z)A_j^\pm(w)=\frac{(1-z^{-1}w)^{\delta_{ij}}(1-z^{-1}(w\mp \hbar))^{\delta_{ij}}}{(1-z^{-1}(w\mp\tfrac{\hbar}{2}))^{\chi_j(i)}}A_j^\pm(w)B_i^\pm(z),\label{BA:1}\\
 B_i^\pm(z)A_j^\mp(w)=\frac{\left(1-z^{-1}(w\pm \tfrac{\hbar}{2}) \right)^{\chi_j(i)}}{(1-z^{-1}w)^{\delta_{ij}}(1-z^{-1}(w\pm \hbar))^{\delta_{ij}}}A_j^\mp(w)B_i^\pm(z). \label{BA:2}
\end{gather}
\end{lemma}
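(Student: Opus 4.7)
The unifying strategy is the Baker--Campbell--Hausdorff identity in its simplest form: if $X,Y$ are operators whose commutator $[X,Y]$ is central (commutes with both $X$ and $Y$), then $e^{X}e^{Y}=e^{Y}e^{X}e^{[X,Y]}$. All of the operators $A_{i}^{\pm}(z)$, $B_{i}^{\pm}(z)$, $\Gamma_{i}^{\pm}(z)$ are defined as exponentials of linear combinations (over $\C(\!(z^{\pm 1})\!)[\![\hbar]\!]$) of the Heisenberg generators $\mcH_{jr}$ with $r\neq 0$, and since $[\mcH_{ir},\mcH_{j,-s}]=r\delta_{ij}\delta_{rs}\mcC$ while $\mcC$ acts as the identity on $\mcV$, any such commutator is a scalar in $\C(\!(z^{\pm 1},w^{\pm 1})\!)[\![\hbar]\!]$. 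Consequently, BCH applies and reduces the lemma to computing these scalar commutators.

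The trivialities \eqref{AABB} follow immediately: each $A_{j}^{\pm}(w)$ is built only from negative-mode generators $\mcH_{k,-r}$, which mutually commute, and likewise each $B_{j}^{\pm}(w)$ is built only from positive-mode generators. For \eqref{Comm:1}, I would compute
\begin{equation*}
\Bigl[\sum_{r>0}\tfrac{\mcH_{i,-r}}{r}z^{r},\;-\sum_{s>0}\tfrac{\mcH_{is}}{s}w^{-s}\Bigr]
=\sum_{r>0}\tfrac{1}{r}(z/w)^{r}=-\log\!\bigl(1-z/w\bigr),
\end{equation*}
so BCH gives $\Gamma_{i}^{-}(z)\Gamma_{i}^{+}(w)=\Gamma_{i}^{+}(w)\Gamma_{i}^{-}(z)(1-z/w)^{-1}$, as claimed.

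For \eqref{BA:1} and \eqref{BA:2}, the same machinery applies but with four sign cases to track. Consider first the case $B_{i}^{+}(z)A_{j}^{+}(w)$. Pairing the positive modes in $B_{i}^{+}(z)$ against the two families of negative modes in $A_{j}^{+}(w)$, only terms with matching index and matching frequency survive, giving
\begin{equation*}
\bigl[\log B_{i}^{+}(z),\log A_{j}^{+}(w)\bigr]
=-\delta_{ij}\sum_{r>0}\tfrac{1}{r}z^{-r}\bigl(w^{r}+(w-\hbar)^{r}\bigr)
+\chi_{j}(i)\sum_{r>0}\tfrac{1}{r}z^{-r}\bigl(w-\tfrac{\hbar}{2}\bigr)^{r},
\end{equation*}
which sums to $\delta_{ij}\bigl[\log(1-z^{-1}w)+\log(1-z^{-1}(w-\hbar))\bigr]-\chi_{j}(i)\log(1-z^{-1}(w-\tfrac{\hbar}{2}))$. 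Exponentiating yields \eqref{BA:1} for the upper signs; the lower-sign case is identical after $\hbar\mapsto -\hbar$. For \eqref{BA:2}, the signs inside the exponent of $A_{j}^{\mp}(w)$ are reversed, which negates the whole scalar commutator and exchanges numerator with denominator, producing the ratio displayed in \eqref{BA:2}; the shift $w\mp\hbar\mapsto w\pm\hbar$ and $w\mp\hbar/2\mapsto w\pm\hbar/2$ comes from the change in sign of $\hbar$ in the definition of $A_{j}^{-}(w)$ relative to $A_{j}^{+}(w)$.

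The only subtlety, and the main point to be careful about, is the interpretation of the logarithms and of factors like $(1-z^{-1}w)^{-1}$ as formal series. Since $B_{i}^{+}(z)$ acts on the right with positive-mode operators that lower particle number while $A_{j}^{+}(w)$ acts with negative-mode operators on the left, the natural expansion domain is $|z|\gg|w|$; this matches the convention $\log(1-z^{-1}w)=-\sum_{r>0}\tfrac{1}{r}(z^{-1}w)^{r}$ that falls out of the BCH calculation, and there is a parallel convention $|w|\gg|z|$ for the $B^{-}\!A^{-}$ pairing. Verifying that the four sign patterns each produce the series expansion indicated in the statement (i.e., with the correct choice of large variable on each side) is where one has to proceed most carefully, but it is purely bookkeeping once the central commutators above have been computed.
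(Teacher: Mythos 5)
Your proof is correct and follows essentially the same route as the paper: reduce everything to the elementary BCH identity $e^{X}e^{Y}=e^{Y}e^{X}e^{[X,Y]}$ (valid when $[X,Y]$ is central) together with the scalar Heisenberg commutators $[\mcH_{ir},\mcH_{j,-s}]=r\delta_{rs}\delta_{ij}\mcC$ and $\sum_{r>0}\frac{1}{r}x^{r}=-\log(1-x)$. The only presentational difference is that the paper obtains \eqref{BA:1} as a repeated application of the already-established \eqref{Comm:1}, and then derives \eqref{BA:2} from \eqref{BA:1} via the identity $B_i^{\pm}(z)=B_i^{\mp}(z)^{-1}$, whereas you recompute each log-commutator directly; both are valid and amount to the same calculation. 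One small slip in your closing aside: in the $B^{-}A^{-}$ pairing the relevant scalar is still a formal power series in $z^{-1}w$ (i.e., the expansion convention is still $|z|\gg|w|$, because $B_i^{-}(z)$ involves only non-negative powers of $z^{-1}$ and $A_j^{-}(w)$ only non-negative powers of $w$ and $\hbar$), not $|w|\gg|z|$. This does not affect the validity of any of your computations.
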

\begin{proof}
Relations of the form \eqref{Comm:1} appear often in the literature: see for instance the proof of Theorem 14.8 in \cite{Ka} and the proof of Proposition 2.9(a) in \cite{FrKa}. It follows from the fact that 
\begin{equation*}
 \exp(A)\exp(B)=\exp(B)\exp(A)\exp([A,B])
\end{equation*}
for any two operators $A$ and $B$ such that $[A,[A,B]]=0=[B,[A,B]]$, together with the relation
\begin{equation*}
\left[ \sum_{r>0}  \frac{\mcH_{i,-r}}{r} z^r ,\sum_{s>0} \frac{\mcH_{j,s}}{s} w^{-s}\right] =  \sum_{r,s>0}\frac{[\mcH_{i,-r},\mcH_{j,s}]}{rs} z^r w^{-s} = -\del_{ij} \sum_{s>0}  \frac{1}{s}\left(\frac{z}{w}\right)^s = \del_{ij}\ln\! \left( 1 - \frac{z}{w} \right). 
\end{equation*}
 The relation \eqref{AABB} is immediate from the definition of the operators $A_i^\pm(z)$ and $B_i^\pm(z)$, while \eqref{BA:1} is a straightforward application of \eqref{Comm:1}. The relation \eqref{BA:2} is a consequence of \eqref{BA:1} since $B_i^\pm(z)=B_i^\mp(z)^{-1}$. 
\end{proof}
We will also need the following identity, which can be deduced immediately from the definition of $g(z)^{\partial_\al}$:
\begin{equation*}
   g(z)^{\partial_\al}e^\beta=e^\beta g(z)^{(\al,\beta)} g(z)^{\partial_\al} \quad \forall\; \alpha,\beta\in Q \; \text{ and }\; g(z) = z \text{ or } g(z)\in ((\C[\hbar])[\![z^{-1}]\!])^\times. \label{Comm:2}
\end{equation*}
\begin{proof}[Proof of Theorem \ref{thm:Vrep}]
The proof is achieved using standard vertex operator calculus.  We will prove that the relations of Definition \ref{DY:def} are preserved by the assignment \eqref{Vrep}. This is immediate for \eqref{DY:c}, and for \eqref{DY:hh} this is a consequence of the relation \eqref{AABB} of Lemma \ref{L:AB} and that $\mcV[\![\hbar]\!]$ is torsion free. The other relations require more elaborate use of Lemma \ref{L:AB} and we will treat them independently. Set 
\begin{equation*}
 z_\pm =z\pm \tfrac{\hbar}{2}.
\end{equation*}
%
%%%%%%%%%%%%%%%%%%%%%
%Relation DY:hhh
%%%%%%%%%%%%%%%%%%%%%
%
\begin{proof}[The relation \eqref{DY:hhh}]\let\qed\relax Let $c_{ij}^\pm=\tfrac{\hbar}{2}((\al_i,\al_j)\pm 1)$ denote the image of $\mathsf{c}_{ij}^\pm$ under \eqref{Vrep}. Then
\begin{align*}
 \mbH_i^+(z) \mbH_j^-(w) &=B_i^+(z_+)B_i^-(z_-)A_j^+(w)A_j^-(w)\left(\frac{z_+}{z_-}\right)^{\partial_{\al_i}}\\
 &=\frac{\left(1-z_-^{-1}w_- \right)^{\chi_j(i)}}{(1-z_-^{-1}w)^{\delta_{ij}}(1-z_-^{-1}(w-\hbar))^{\delta_{ij}}}B_i^+(z_+)A_j^+(w)B_i^-(z_-)A_j^-(w)\left(\frac{z_+}{z_-}\right)^{\partial_{\al_i}}\\
 &=\frac{\left(1-z_-^{-1}w_- \right)^{\chi_j(i)}(1-z_+^{-1}w)^{\delta_{ij}}(1-z_+^{-1}(w - \hbar))^{\delta_{ij}}}{(1-z_-^{-1}w)^{\delta_{ij}}(1-z_-^{-1}(w - \hbar))^{\delta_{ij}}(1-z_+^{-1}w_-)^{\chi_j(i)}}A_j^+(w)B_i^+(z_+)B_i^-(z_-)A_j^-(w)\left(\frac{z_+}{z_-}\right)^{\partial_{\al_i}}.
\end{align*}
On the other hand, we have 
\begin{align*}
 \mbH_j^-(w)\mbH_i^+(z) &=A_j^+(w)A_j^-(w)B_i^+(z_+)B_i^-(z_-)\left(\frac{z_+}{z_-}\right)^{\partial_{\al_i}}\\
 &=\frac{(1-z_+^{-1}w)^{\delta_{ij}}(1-z_+^{-1}(w+\hbar))^{\delta_{ij}}}{(1-z_+^{-1}w_+)^{\chi_j(i)}}A_j^+(w)B_i^+(z_+)A_j^-(w)B_i^-(z_-)\left(\frac{z_+}{z_-}\right)^{\partial_{\al_i}}\\
 &=\frac{(1-z_+^{-1}w)^{\delta_{ij}}(1-z_+^{-1}(w+\hbar))^{\delta_{ij}}(1-z_-^{-1}w_+)^{\chi_j(i)}}{(1-z_+^{-1}w_+)^{\chi_j(i)}(1-z_-^{-1}w)^{\delta_{ij}}(1-z_-^{-1}(w + \hbar))^{\delta_{ij}}}
    A_j^+(w)B_i^+(z_+)B_i^-(z_-)A_j^-(w)\left(\frac{z_+}{z_-}\right)^{\partial_{\al_i}}.
\end{align*}
Therefore, since $\mcV[\![\hbar]\!]$ is torsion free, the assignment \eqref{Vrep} will preserve \eqref{DY:hhh} provided 
\begin{gather*}
 ((z-w)^2-(c_{ij}^-)^2)\frac{\left(1-z_-^{-1}w_- \right)^{\chi_j(i)}(1-z_+^{-1}w)^{\delta_{ij}}(1-z_+^{-1}(w-\hbar))^{\delta_{ij}}}{(1-z_-^{-1}w)^{\delta_{ij}}(1-z_-^{-1}(w - \hbar))^{\delta_{ij}}(1-z_+^{-1}w_-)^{\chi_j(i)}}\\
 =((z-w)^2-(c_{ij}^+)^2)\frac{(1-z_+^{-1}w)^{\delta_{ij}}(1-z_+^{-1}(w+\hbar))^{\delta_{ij}}(1-z_-^{-1}w_+)^{\chi_j(i)}}{(1-z_+^{-1}w_+)^{\chi_j(i)}(1-z_-^{-1}w)^{\delta_{ij}}(1-z_-^{-1}(w+\hbar))^{\delta_{ij}}}.
\end{gather*}
for all $i,j\in I$. This can be checked directly using 
\begin{equation}
c_{ij}^\pm=\begin{cases}
            \pm\tfrac{\hbar}{2} &\text{ if }i\neq j,\,\chi_j(i)=0,\\
            -\tfrac{\hbar}{2}\pm \frac{\hbar}{2} &\text{ if } i\neq j, \, \chi_j(i)=1,\\
            \hbar \pm \tfrac{\hbar}{2} &\text{ if } i=j.
           \end{cases}\label{cijpm} 
\end{equation}
\end{proof}
%%%%%%%%%%%%%%%%%%%%%
%Relation DY:xh+
%%%%%%%%%%%%%%%%%%%%%

\begin{proof}[The relation \eqref{DY:xh+}]\let\qed\relax Making use of \eqref{Comm:2} together with Lemma \ref{L:AB}, we deduce that
 \begin{align*}
 \mbH_i^+(z) & \mbX_j^\pm(w)\\
 &=\pm B_i^+(z_+)B_i^-(z_-)A_j^\pm(w)B_j^\pm(w)\left(\frac{z_+}{z_-} \right)^{\partial_{\al_i}}e^{\pm \al_j}w^{\partial_{\pm \al_j}}\\
 &=\pm\frac{(1-z_-^{-1}w_\mp)^{\pm \chi_j(i)}\left(\tfrac{z_+}{z_-} \right)^{\pm(\al_i,\al_j)}}{(1-z_-^{-1}w)^{\pm \delta_{ij}}(1-z_-^{-1}(w\mp \hbar))^{\pm \delta_{ij}}} 
                         B_i^+(z_+)A_j^\pm(w)B_i^-(z_-)B_j^\pm(w)e^{\pm \al_j}w^{\partial_{\pm \al_j}}\left(\frac{z_+}{z_-} \right)^{\partial_{\al_i}}\\
 &=\frac{(1-z_-^{-1}w_\mp)^{\pm \chi_j(i)}(1-z_+^{-1}w)^{\pm \delta_{ij}}(1-z_+^{-1}(w\mp \hbar))^{\pm \delta_{ij}}}{(1-z_-^{-1}w)^{\pm \delta_{ij}}(1-z_-^{-1}(w\mp \hbar))^{\pm \delta_{ij}}(1-z_+^{-1}w_\mp)^{\pm \chi_j(i)}} \left(\frac{z_+}{z_-} \right)^{\pm(\al_i,\al_j)}
                         \mbX_j^\pm(w)\mbH_i^+(z).
 \end{align*}
 Therefore, the assignment \eqref{Vrep} will preserve the relation \eqref{DY:xh+} if the following identity holds: 
 \begin{equation*}
  (z-w\pm c_{ij}^+)
  =(z-w\mp c_{ij}^-)\frac{(1-z_-^{-1}w_\mp)^{\pm \chi_j(i)}(1-z_+^{-1}w)^{\pm \delta_{ij}}(1-z_+^{-1}(w\mp \hbar))^{\pm \delta_{ij}}}{(1-z_-^{-1}w)^{\pm \delta_{ij}}(1-z_-^{-1}(w\mp \hbar))^{\pm \delta_{ij}}(1-z_+^{-1}w_\mp)^{\pm \chi_j(i)}} \left(\frac{z_+}{z_-} \right)^{\pm(\al_i,\al_j)}.\\ 
 \end{equation*}
This is easily verified using \eqref{cijpm}. If $i\neq j$ and $\chi_j(i)=0$ then this is clear. If $i\neq j$ and $\chi_j(i)=1$, then the right-hand side equals
 \begin{equation*}
  (z-w\pm\hbar)\left(\frac{1-z_-^{-1}w_\mp}{1-z_+^{-1}w_\mp}\right)^{\pm 1}\left(\frac{z_-}{z_+}\right)^{\pm 1}=(z-w\pm\hbar)\left(\frac{z-w-\tfrac{\hbar}{2}\pm \tfrac{\hbar}{2}}{z-w+\tfrac{\hbar}{2}\pm \tfrac{\hbar}{2}}\right)^{\pm 1}=z-w,
 \end{equation*}
which is the left-hand side. If $i=j$, then $c_{ij}^+=\tfrac{3\hbar}{2}$ and $c_{ij}^-=\tfrac{\hbar}{2}$, and the right-hand side of the equality is 
\begin{equation*}
(z-w\mp \tfrac{\hbar}{2})\left(\frac{(z_+-w)(z_+-w\pm \hbar)}{(z_--w\pm \hbar)(z_--w)} \right)^{\pm 1}=(z-w\mp \tfrac{\hbar}{2})\left(\frac{(z-w+\tfrac{\hbar}{2})(z-w+\tfrac{\hbar}{2}\pm \hbar)}{(z-w-\tfrac{\hbar}{2}\pm \hbar)(z-w-\tfrac{\hbar}{2})} \right)^{\pm 1}=z-w\pm\tfrac{3\hbar}{2}.
\end{equation*}
Note that in neglecting the factor of $\hbar^{-1}$ which appears in \eqref{DY:xh+}, we have  made use of the fact that $\mcV[\![\hbar]\!]$ is torsion free.
\end{proof}
%
%%%%%%%%%%%%%%%%%%%%%
%Relation DY:xh-
%%%%%%%%%%%%%%%%%%%%%
%
\begin{proof}[The relation \eqref{DY:xh-}]\let\qed\relax Applying again the relations of Lemma \ref{L:AB}, we obtain 

\begin{align*}
\mbH_i^-(z)\mbX_j^\pm(w)&= \pm A_j^\pm(w)A_i^+(z)A_i^-(z)B_j^\pm(w)e^{\pm \al_j}w^{\partial_{\pm \al_j}}\\
                        &=\pm \frac{(1-w^{-1}z)^{\pm \delta_{ij}}(1-w^{-1}(z+\hbar))^{\pm \delta_{ij}}}{(1-w^{-1}z_+)^{\pm \chi_j(i)}}A_j^\pm(w)A_i^+(z)B_j^\pm(w)A_i^-(z)e^{\pm \al_j}w^{\partial_{\pm \al_j}}\\
                        &=\frac{(1-w^{-1}(z+\hbar))^{\pm \delta_{ij}}(1-w^{-1}z_-)^{\pm \chi_j(i)}}{(1-w^{-1}z_+)^{\pm \chi_j(i)}(1-w^{-1}(z-\hbar))^{\pm \delta_{ij}}}\mbX_j^\pm(w)\mbH_i^-(z)\\
                        &=\left(\frac{1-w^{-1}\left(z\pm \hbar d_{ij} \right)}{1-w^{-1}\left(z\mp\hbar d_{ij}\right)}\right)\mbX_j^\pm(w)\mbH_i^-(z). 
\end{align*}
(The last equality is obtained by considering the three cases $i=j$ and, when $i\neq j$, $\chi_j(i)=1$ and $\chi_j(i)=0$.) Multiplying both sides by $\left(w-z\pm \hbar d_{ij}\right)$ and using the fact that $\mcV[\![\hbar]\!]$ is torsion free, we find that the relation \eqref{DY:xh-} is preserved by the assignment \eqref{Vrep}. 
 
 \begin{NB}
 If $\chi_j(i)=1$, then 
 \begin{equation*}
  \frac{(1-w^{-1}(z+\hbar))^{\pm \delta_{ij}}(1-w^{-1}z_-)^{\pm \chi_j(i)}}{(1-w^{-1}z_+)^{\pm \chi_j(i)}(1-w^{-1}(z-\hbar))^{\pm \delta_{ij}}}
   =\left(\frac{w-z+\tfrac{\hbar}{2}}{w-z-\tfrac{\hbar}{2}} \right)^{\pm 1}=\left(\frac{w-z\pm \tfrac{\hbar}{2}}{w-z\mp \tfrac{\hbar}{2}}\right).
 \end{equation*}
If $i=j$, we instead get 
\begin{equation*}
 \frac{(1-w^{-1}(z+\hbar))^{\pm \delta_{ij}}(1-w^{-1}z_-)^{\pm \chi_j(i)}}{(1-w^{-1}z_+)^{\pm \chi_j(i)}(1-w^{-1}(z-\hbar))^{\pm \delta_{ij}}}
   =\left(\frac{w-z-\hbar}{w-z+\hbar}\right)^{\pm 1}=\left(\frac{w-z\mp \hbar}{w-z\pm \hbar}\right).
\end{equation*}
 \end{NB}

\end{proof}
%
%
%%%%%%%%%%%%%%%%%%%%%
%Normal ordering
%%%%%%%%%%%%%%%%%%%%%
To prove that the remaining three relations of Definition \ref{DY:def} are satisfied by $\{\mbX_i^\pm(z),\mbH_i^\pm(z)\}_{i\in I}$, we introduce the following normal ordering: given a finite integer $n\in \Z_{>0}$  together with collections $i_a\in I$ and $\eps_a\in \{\pm\}$ for each $1\leq a\leq n$, set 
\begin{equation*}
 :\!\mbX_{i_1}^{\eps_1}(z_1)\cdots \mbX_{i_n}^{\eps_n}(z_n)\!:= \left(\prod_{b=1}^n \eps_b \right)A_{i_1}^{\eps_1}(z_1)\cdots A_{i_n}^{\eps_n}(z_n)B_{i_1}^{\eps_1}(z_1)\cdots B_{i_n}^{\eps_n}(z_n)e^{\eps_1 \al_{i_1}+\cdots+\eps_n \al_{i_n}}z_1^{\partial_{\eps_1 \al_{i_1}}}\cdots z_n^{\partial_{\eps_n \al_{i_n}}}.
\end{equation*}
Note that with this definition, \eqref{AABB} implies that  $:\!\mbX_{i_1}^{\eps_1}(z_1)\cdots \mbX_{i_n}^{\eps_n}(z_n)\!:\, =\, :\!\mbX_{i_{\sigma(1)}}^{\eps_{\sigma(1)}}(z_{\sigma(1)})\cdots \mbX_{i_{\sigma(n)}}^{\eps_{\sigma(n)}}(z_{\sigma(n)})\!:$ for each permutation $\sigma\in S_n$.

By \eqref{Comm:2} and Lemma \ref{L:AB}, we have 
\begin{align}
 \mbX_i^\pm(z)\mbX_j^\mp(w)&=\veps(\al_i,\al_j)\frac{(1-z^{-1}w_\pm)^{\chi_j(i)}}{(1-z^{-1}w)^{\delta_{ij}}(1-z^{-1}(w\pm \hbar))^{\delta_{ij}}z^{(\al_i,\al_j)}}:\!\mbX_i^\pm(z)\mbX_j^\mp(w)\!:,
 \label{NO:1}\\
 \mbX_i^\pm(z)\mbX_j^\pm(w)&= \veps(\al_i,\al_j)\frac{(1-z^{-1}w)^{\delta_{ij}}(1-z^{-1}(w\mp \hbar))^{\delta_{ij}}}{(1-z^{-1}w_\mp)^{\chi_j(i)}z^{-(\al_i,\al_j)}}:\!\mbX_i^\pm(z)\mbX_j^\pm(w)\!:,
 \label{NO:2}
\end{align}
where we have used \eqref{coc:3}. 
%
%%%%%%%%%%%%%%%%%%%%%
%Relation DY:xx
%%%%%%%%%%%%%%%%%%%%%
%
\begin{proof}[The relation \eqref{DY:xx}]\let\qed\relax
 By \eqref{NO:2}, the equality 
 \begin{equation*}
 (z-w\mp \hbar d_{ij})\mbX_i^\pm(z)\mbX_j^\pm(w)=(z-w\pm \hbar d_{ij})\mbX_j^\pm(w)\mbX_i^\pm(z)\quad \forall \; i,j\in I 
 \end{equation*}
will be satisfied provided the following identity holds: 
\begin{equation*}
 (z-w\mp \hbar d_{ij})\frac{(1-z^{-1}w)^{\delta_{ij}}(1-z^{-1}(w\mp \hbar))^{\delta_{ij}}}{(1-z^{-1}w_\mp)^{\chi_j(i)}z^{-(\al_i,\al_j)}}
 =(-1)^{(\al_i,\al_j)}(z-w\pm \hbar d_{ij})\frac{(1-w^{-1}z)^{\delta_{ij}}(1-w^{-1}(z\mp \hbar))^{\delta_{ij}}}{(1-w^{-1}z_\mp)^{\chi_j(i)}w^{-(\al_i,\al_j)}}.
\end{equation*}
Using that $(\al_i,\al_j)=2\delta_{ij}-\chi_j(i)$, we may rewrite this as 
\begin{equation*}
 (z-w\mp \delta_{ij}\hbar \pm \tfrac{\hbar\cdot \chi_j(i)}{2})\frac{(z-w)^{\delta_{ij}}(z-w\pm \hbar)^{\delta_{ij}}}{(z-w\pm \frac{\hbar}{2})^{\chi_j(i)}}
 =(-1)^{\chi_j(i)}(z-w\pm \delta_{ij}\hbar\mp \tfrac{\hbar\cdot \chi_j(i)}{2})\frac{(w-z)^{\delta_{ij}}(w-z\pm \hbar)^{\delta_{ij}}}{(w-z \pm \frac{\hbar}{2})^{\chi_j(i)}}.
\end{equation*}
If $i\neq j$, then either $\chi_j(i)=1$ and both sides are equal to $1$, or $\chi_j(i)=0$ and both sides equal $(z-w)$. If instead $i=j$, then both sides are equal to the polynomial 
\begin{equation*}
(z-w\pm \hbar)(z-w\mp \hbar)(z-w). \qedhere
\end{equation*}
\end{proof}
To prove that the relations \eqref{DY:xxh} and \eqref{DY:serre} are preserved by \eqref{Vrep}, we employ the following well-known property of the formal delta function $\delta({z},{w})$ which can be found in \cite[Lem.\ 7.7]{Ka} and \cite[Prop.\ 2.1.8 (b)]{LeLi}:
given a vector space $V$ and $f(z,w)\in V[\![z^{\pm 1},w^{\pm 1}]\!]$, we have
 \begin{equation}
  f(z,w)\cdot \delta({z},{w})=f(z,z)\cdot \delta({z},{w}), \label{delta}
 \end{equation}
 provided both sides of this equality are well-defined elements of $V[\![z^{\pm 1},w^{\pm 1}]\!]$. 
%
%%%%%%%%%%%%%%%%%%%%%
%Relation DY:xxh
%%%%%%%%%%%%%%%%%%%%%
%
\begin{proof}[The relation \eqref{DY:xxh}]\let\qed\relax From \eqref{NO:1} we obtain the equality of operators 
\begin{equation}
 [\mbX_i^+(z),\mbX_j^-(w)]= \veps(\al_i,\al_j) F_{i,j}(z,w) :\! \mbX_i^+(z)\mbX_j^-(w)\!:, \label{x+x-}
\end{equation}
where $F_{i,j}(z,w)$ is given by 
\begin{equation}
 F_{i,j}(z,w)=\left(\frac{(1-z^{-1}w_+)^{\chi_j(i)}z^{-(\al_i,\al_j)}}{(1-z^{-1}w)^{\delta_{ij}}(1-z^{-1}(w+\hbar))^{\delta_{ij}}} 
                                  -
                                  (-1)^{(\al_i,\al_j)}\frac{(1-w^{-1}z_-)^{\chi_i(j)}w^{-(\al_i,\al_j)}}{(1-w^{-1}z)^{\delta_{ij}}(1-w^{-1}(z- \hbar))^{\delta_{ij}}}
 \right).
\end{equation}
If $i\neq j$ and $\chi_j(i)=0$, then it is clear that $F_{i,j}(z,w)=0$. If $i\neq j$ and $\chi_j(i)=1$, then we again obtain 
\begin{equation*}
 F_{i,j}(z,w)=(1-z^{-1}w_+)z+(1-w^{-1}z_-)w=0.
\end{equation*}
Hence, we have shown that the assignment \eqref{Vrep} preserves the relation \eqref{DY:xxh} when $i\neq j$. If $i=j$, we have 
\begin{align*}
 F_{i,i}(z,w)&=\frac{z^{-2}}{(1-z^{-1}w)(1-z^{-1}(w+\hbar))}-\frac{w^{-2}}{(1-w^{-1}z)(1-w^{-1}(z-\hbar))}\\
             &=\frac{z^{-1}}{1-z^{-1}w}\left(\frac{z^{-1}}{1-z^{-1}(w+\hbar)}+\frac{(w+\hbar)^{-1}}{1-(w+\hbar)^{-1}z}\right)\\
             &\qquad -\frac{z^{-1}}{1-z^{-1}w}\frac{(w+\hbar)^{-1}}{1-(w+\hbar)^{-1}z}-\frac{w^{-2}}{(1-w^{-1}z)(1-w^{-1}(z-\hbar))}\\
             &=\frac{z^{-1}}{1-z^{-1}w}\delta({w+\hbar},{z})-\frac{w^{-1}}{1-w^{-1}(z-\hbar)}\delta({w},{z}),
\end{align*}
where we have used the identities \eqref{eq:deltaexp}  and $\frac{(w+\hbar)^{-1}}{1-(w+\hbar)^{-1}z}=\frac{w^{-1}}{1-w^{-1}(z-\hbar)}$. Substituting the above expression for $F_{i,i}(z,w)$ into \eqref{x+x-} and using that $\veps(\al_i,\al_i)=-1$, we obtain 
\begin{equation}\label{x+x-'}
 [\mbX_i^+(z),\mbX_i^-(w)]=-\delta({w+\hbar},{z})\tfrac{z^{-1}}{1-z^{-1}w}:\! \mbX_i^+(z)\mbX_i^-(w)\!:+ \;\delta({w},{z})\tfrac{w^{-1}}{1-w^{-1}(z-\hbar)}:\! \mbX_i^+(z)\mbX_i^-(w)\!:.
\end{equation}
By \eqref{delta}, 
\begin{align*}
 \delta(w+\hbar,z)\tfrac{z^{-1}}{1-z^{-1}w}:\! \mbX_i^+(z)\mbX_i^-(w)\!:&=\delta(w+\hbar,z)\tfrac{(w+\hbar)^{-1}}{1-(w+\hbar)^{-1}w}:\! \mbX_i^+(z)\mbX_i^-(w)\!:|_{z\mapsto w+\hbar}\\
 &=-\tfrac{1}{\hbar}\delta(w+\hbar,z)A_i^+(w+\hbar)A_i^-(w)B_i^+(w+\hbar)B_i^-(w)\left(\tfrac{w+\hbar}{w}\right)^{\partial_{\al_i}}\\
 &=-\tfrac{1}{\hbar}\delta(w+\hbar,z)\mbH_i^+(w+\tfrac{\hbar}{2}),
\end{align*}
since $A_i^+(w+\hbar)=A_i^-(w)^{-1}$ (see \eqref{Hp:exp}). Similarly, \eqref{delta} implies that 
\begin{align*}
 \delta({w},{z})\tfrac{w^{-1}}{1-w^{-1}(z-\hbar)}:\! \mbX_i^+(z)\mbX_i^-(w)\!:&=\tfrac{1}{\hbar}\delta({w},{z}):\! \mbX_i^+(z)\mbX_i^-(w)\!:|_{w\to z}\\
                 &=-\tfrac{1}{\hbar}\delta({w},{z})A_i^+(z)A_i^-(z)B_i^+(z)B_i^-(z)=-\tfrac{1}{\hbar}\delta({w},{z})\mbH_i^-(z),
\end{align*}
where we have used \eqref{Hm:exp} and that $B_i^+(z)=B_i^-(z)^{-1}$. Substituting these identities back into \eqref{x+x-'}, we find that 
\begin{equation*}
 [\mbX_i^+(z),\mbX_i^-(w)]=\tfrac{1}{\hbar}\left(\delta({w+\hbar},{z})\mbH_i^+(w+\tfrac{\hbar}{2})- \delta({w},{z})\mbH_i^-(z)\right),
\end{equation*}
as desired. \qedhere
\end{proof}
%
%%%%%%%%%%%%%%%%%%%%%
%Relation DY:serre
%%%%%%%%%%%%%%%%%%%%%
%
\begin{proof}[The relation \eqref{DY:serre}]
Observe first that if $(\al_i,\al_j)=0$ then $\veps(\al_i,\al_j)=\veps(\al_j,\al_i)$ and \eqref{NO:2} implies 
\begin{equation*}
 [\mbX_i^\pm(z),\mbX_j^\pm(w)]=0.
\end{equation*}
Hence we only need to verify that \eqref{DY:serre} holds when $(\al_i,\al_j)=-1$. By \eqref{NO:2}, we have 
\begin{equation}\label{serre:1}
[\mbX_i^\pm(z_2),\mbX_j^\pm(w)]=\veps(\al_i,\al_j)\left(\frac{z_2^{-1}}{1-z_2^{-1}w_\mp}+\frac{w^{-1}}{1-w^{-1}(z_2)_\mp} \right) :\!\mbX_i^\pm(z_2)\mbX_j^\pm(w)\!:,
\end{equation}
while repeated application of \eqref{BA:1} gives 
\begin{align*}
 \mbX_i^\pm(z_1) :\! \mbX_i^\pm (z_2)\mbX_j^\pm(w)\!:&=-\veps(\al_i,\al_j)\frac{z_1(1-z_1^{-1}z_2)(1-z_1^{-1}(z_2\mp \hbar))}{1-z_1^{-1}w_\mp} :\! \mbX_i^\pm(z_1) \mbX_i^\pm (z_2)\mbX_j^\pm(w)\!:,\\
 :\! \mbX_i^\pm (z_2)\mbX_j^\pm(w)\!: \mbX_i^\pm(z_1)&=\veps(\al_i,\al_j)\frac{z_2^{2}w^{-1}(1-z_2^{-1}z_1)(1-z_2^{-1}(z_1\mp \hbar))}{1-w^{-1}(z_1)_\mp}:\! \mbX_i^\pm(z_1) \mbX_i^\pm (z_2)\mbX_j^\pm(w)\!:.
\end{align*}
Combining these last two identities with  \eqref{serre:1} gives 
\begin{equation*}
 \left[\mbX_i^\pm(z_1),[\mbX_i^\pm(z_2),\mbX_j^\pm(w)]\right]=-f(z_1,z_2,w):\! \mbX_i^\pm(z_1) \mbX_i^\pm (z_2)\mbX_j^\pm(w)\!:,
\end{equation*}
where 
\begin{equation*}
 f(z_1,z_2,w)=\left(\frac{z_2^{-1}}{1-z_2^{-1}w_\mp}+\frac{w^{-1}}{1-w^{-1}(z_2)_\mp} \right)\!\left(\frac{w^{-1}(z_2-z_1)(z_2-z_1\pm\hbar)}{1-w^{-1}(z_1)_\mp}                     
                +\frac{z_1^{-1}(z_1-z_2)(z_1-z_2\pm \hbar)}{1-z_1^{-1}w_\mp}\right).
\end{equation*}
Thus, the identity 
\begin{equation}
f(z_1,z_2,w)+f(z_2,z_1,w)=0 \label{serre:2}
\end{equation}
will imply $\left[\mbX_i^\pm(z_{1}),[\mbX_i^\pm(z_{2}),\mbX_j^\pm(w)]\right] + \left[\mbX_i^\pm(z_{2}),[\mbX_i^\pm(z_{2}),\mbX_j^\pm(w)]\right]=0$.  Since 
\begin{align*}
 \frac{z_2^{-1}}{1-z_2^{-1}w_\mp}+\frac{w^{-1}}{1-w^{-1}(z_2)_\mp} &=\delta(z_2,w_\mp)\mp \frac{\hbar w^{-2}}{(1-w^{-1}(z_2)_\mp)(1-w^{-1}(z_2)_\pm)}\quad \text{ and }\; \\
 \frac{z_1^{-1}}{1-z_1^{-1}w_\mp}&=\delta(z_1,w_\mp)-\frac{w^{-1}}{1-w^{-1}(z_1)_\pm},
\end{align*}
the property \eqref{delta} of the formal delta function implies that
\begin{align*}
 f(z_1,z_2,w)=  &\delta(z_2,w_\mp)\left(\frac{w^{-1}(z_2-z_1)(z_2-z_1\pm\hbar)}{1-w^{-1}(z_1)_\mp}                     
                +\frac{z_1^{-1}(z_1-z_2)(z_1-z_2\pm \hbar)}{1-z_1^{-1}w_\mp}\right)\\
                &\mp\delta(z_1,w_\mp)\frac{\hbar w^{-2}(z_1-z_2)(z_1-z_2\pm \hbar)}{(1-w^{-1}(z_2)_\mp)(1-w^{-1}(z_2)_\pm)}\\
                &\mp \frac{\hbar w^{-2}}{(1-w^{-1}(z_2)_\mp)(1-w^{-1}(z_2)_\pm)}\left(\frac{w^{-1}(z_2-z_1)(z_2-z_1\pm\hbar)}{1-w^{-1}(z_1)_\mp}                     
                -\frac{w^{-1}(z_1-z_2)(z_1-z_2\pm \hbar)}{1-w^{-1}(z_1)_\pm}\right)\\
                =&\pm\hbar  \delta(z_2,w_\mp)\mp  \hbar\delta(z_1,w_\mp)+\frac{\hbar^2w^{-4}(z_2-z_1)(z_2+z_1-2w)}{(1-w^{-1}(z_2)_\mp)(1-w^{-1}(z_2)_\pm)(1-w^{-1}(z_1)_\mp)(1-w^{-1}(z_1)_\pm)}.
\end{align*}
As this expression is antisymmetric in $z_1$ and $z_2$, we may conclude that \eqref{serre:2} holds, and thus that the vertex operators $\{\mbX_i^\pm(z)\}_{i\in I}$ satisfy the Serre relations \eqref{DY:serre}. 
\end{proof}
\let\qed\relax
\end{proof}
\begin{remark}\label{Rem:gen}
 Taking the coefficient of $z^{-2}w^0$ in the relation \eqref{DY:xxh} with $i=j$ yields 
 \begin{equation*}
  [x_{i1}^+,x_{i,-1}^-]=\msc+h_{i0}. 
 \end{equation*}
Combining this with the relation $[x_{i0}^+,x_i^-(w)]=\wt h_i^+(w+\frac{\hbar \msc}{2})+\wt h_i^-(w)$ (see \eqref{eqn:xi0xj-}), we deduce that 
$DY_\hbar^\msc(\mfg)$ is generated by $\{x_{ir}^\pm\}_{i\in I,r\in \Z}$. Moreover, in the Yangian double $DY_\hbar^\kappa(\mfg)$ at level $\kappa\in \C^\times$, the series $h_i^\pm(z)$ are uniquely determined by the relations
\begin{gather}
 \kappa^{-1}(z-w)[x_i^+(z),x_i^-(w)]=\delta({w+\hbar\kappa},{z})h_i^+\!\left(w+\tfrac{\hbar \kappa}{2}\right), \label{H+:un}\\
 \kappa^{-1}(z-w-\hbar\kappa)[x_i^+(z),x_i^-(w)]=\delta({w},{z})h_i^-(z). \label{H-:un}
\end{gather}
In particular, the representation $\rho_\hbar$ of Theorem \ref{thm:Vrep} is entirely determined by $x_i^\pm(z)\mapsto \mbX_i^\pm(z)$ for all $i\in I$, and the formulas \eqref{Hp:exp} and \eqref{Hm:exp} for $\mbH_i^\pm(z)$ may be deduced from \eqref{H+:un} and \eqref{H-:un}, as was essentially done below \eqref{x+x-'}. 
\end{remark}

%%%%%%%%%%%%%%%%%%%%%%%%%%%%%%%%%%%%%%%%%%%%%%%%%%%%%%%%%%%%%%%%%%%%%%%%
% Subsection: The DY^\msc(\mfg)-module \wt\mcV
%%%%%%%%%%%%%%%%%%%%%%%%%%%%%%%%%%%%%%%%%%%%%%%%%%%%%%%%%%%%%%%%%%%%%%%%
\subsection{The $DY^\msc(\mfg)$-module $\widetilde\mcV$}\label{ssec:wtV}

As the coefficients of the vertex operators $\mbX_i^\pm(z)$ and $\mbH_i^\pm(z)$ are elements of $\End_{\C[\![\hbar]\!]}\mcV[\![\hbar]\!]$, it is not clear that they can be specialized at $\hbar=\zeta\in \C^\times$ to produce a $DY_\zeta^\msc(\mfg)$ representation.
In this subsection we exploit the existence of a $(\Z\times Q)$-grading on $\mcV$ to show that this can indeed be accomplished after modifying the representation space appropriately. 

The $(\Z\times Q)$-grading on $\mcV=\C[\mcH_{i,-r}]_{i\in I,r>0}\otimes \C_\veps[Q]$ is given by
\begin{equation*}
 \deg \mcH_{i,-r}=(-r,0), \quad \deg e^\al=\left(-\tfrac{1}{2}(\al,\al),\al \right) \quad \forall \; i\in I,\,r>0\; \text{ and }\;\al\in Q.  
\end{equation*}
Note that this choice of grading is different from the more familiar grading on Fock spaces obtained by setting $\deg \mcH_{i,-r}=(r,0)$ and $\deg e^\al=\left(\tfrac{1}{2}(\al,\al),\al \right)$. Let $\mcV_{n,\beta}$ denote the subspace of $\mcV$ spanned by elements of degree $(n,\beta)$, so that $\mcV=\bigoplus_{(n,\beta)\in \Z\times Q}\mcV_{n,\beta}$. We note the following useful observation:
\begin{lemma}\label{L:bounded}
 Setting $\mcP=\{(n,\beta)\in \Z\times Q\,:\,n\leq -\tfrac{1}{2}(\beta,\beta)\}$, we have 
 \begin{equation*}
  \mcV=\bigoplus_{(n,\beta)\in \mcP}\mcV_{n,\beta}.
 \end{equation*}
 Equivalently, $\mcV_{n,\beta}=\{0\}$ for all $n> -\tfrac{1}{2}(\beta,\beta)$. 
\end{lemma}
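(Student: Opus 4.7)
The plan is to verify the claim directly on an explicit homogeneous basis of $\mcV$. Since $\C[\mcH_{i,-r}]_{i\in I, r>0}$ is a polynomial ring and $\C_\veps[Q]$ has basis $\{e^\beta\}_{\beta\in Q}$, the space $\mcV$ has a basis consisting of pure tensors
\[
v = \mcH_{i_1,-r_1}\mcH_{i_2,-r_2}\cdots \mcH_{i_k,-r_k}\otimes e^\beta,
\]
with $k\geq 0$, each $r_j>0$, and $\beta\in Q$. The key point is simply to compute the $(\Z\times Q)$-degree of such a basis vector using that the grading is multiplicative on tensor products and on products of $\mcH_{i,-r}$'s.

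First I would record that, by the given degree assignments, $\deg v = \bigl(-\sum_{j=1}^k r_j - \tfrac{1}{2}(\beta,\beta),\,\beta\bigr)$. Since $r_j > 0$ for each $j$, the first coordinate satisfies $-\sum_j r_j - \tfrac{1}{2}(\beta,\beta) \leq -\tfrac{1}{2}(\beta,\beta)$, with equality precisely when $k=0$. Consequently every basis vector of $\mcV$ lies in $\mcV_{n,\beta}$ for some $(n,\beta)\in\mcP$. Because $\mcV$ is spanned by these basis vectors and each lies in a single graded piece indexed by $\mcP$, we obtain $\mcV = \bigoplus_{(n,\beta)\in\mcP}\mcV_{n,\beta}$, equivalently $\mcV_{n,\beta}=\{0\}$ whenever $n > -\tfrac{1}{2}(\beta,\beta)$.

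There is essentially no obstacle here: the only thing to be a little careful about is that the degree assignment on $\C_\veps[Q]$ is well-defined, which is immediate since $\{e^\beta\}_{\beta\in Q}$ is a $\C$-basis and the grading is specified on these basis elements (the bimultiplicativity of $\veps$ ensures that $e^\alpha\cdot e^\beta = \veps(\alpha,\beta) e^{\alpha+\beta}$ respects the $Q$-component of the degree, though this compatibility is not even needed for the statement at hand, which concerns only the underlying graded vector space structure).
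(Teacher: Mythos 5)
Your proof is correct. The paper does not actually supply a proof of Lemma \ref{L:bounded}; it is stated as a ``useful observation'' immediately following the definition of the grading, and the reader is left to perform exactly the computation you carry out: a monomial $\mcH_{i_1,-r_1}\cdots\mcH_{i_k,-r_k}\otimes e^\beta$ has degree $\bigl(-\sum_j r_j - \tfrac12(\beta,\beta),\ \beta\bigr)$, and since each $r_j>0$ the first coordinate is bounded above by $-\tfrac12(\beta,\beta)$. Your argument is the intended one, and your remark that the multiplicative compatibility of the grading on $\C_\veps[Q]$ is not even needed here (only the graded vector space structure matters) is a sensible observation.
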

Next, for each $\beta \in Q$ we set $n(\beta)=-\frac{1}{2}(\beta,\beta)$, so that
\begin{equation*}
 \mcV_{\beta}=\bigoplus_{n\in \Z}\mcV_{n,\beta}=\bigoplus_{n\leq n(\beta)}\mcV_{n,\beta} \quad \forall \; \beta\in Q. 
\end{equation*}
Let $\wt \mcV_\beta=\prod_{n\leq n(\beta)}\mcV_{n,\beta}$ be the completion of $\mcV_\beta$ with respect to this grading, and set 
\begin{equation*}
 \wt \mcV=\bigoplus_{\beta \in Q}\wt \mcV_\beta. 
\end{equation*}
As $\mcV_0=\bigoplus_{n\leq 0}\mcV_{n,0}$ is precisely the Fock space $\mcF=\C[\mcH_{i,-r}]_{i\in I,r>0}$, we have the equivalent characterizations $\wt \mcV_\beta\cong\wt \mcF\ot \C e^\beta$ and $\wt \mcV\cong \wt \mcF\ot \C_\veps[Q]$, where $\wt \mcF=\wt \mcV_0$.

Now set 
\begin{equation*}
 \mcF_\hbar=(\C[\hbar])[H_{i,-r}]_{i\in I,r>0}\cong \C[\hbar]\otimes \mcF.
\end{equation*}
The $(\Z\times Q)$-grading on $\mcV$ extends to a grading on $\mcV_\hbar=\mcF_\hbar\otimes \C_\veps[Q]$ after imposing $\deg \hbar=(0,0)$. We use the same notation as above to denote its graded pieces and $\Z$-completion:
\begin{gather*}
 \mcV_\hbar=\bigoplus_{(n,\beta)\in \mcP}(\mcV_\hbar)_{n,\beta}=\bigoplus_{\beta\in Q}(\mcV_\hbar)_\beta \quad \text{ with }\quad (\mcV_\hbar)_\beta=\bigoplus_{n\leq n(\beta)}(\mcV_\hbar)_{n,\beta},\\
 \wt \mcV_\hbar=\bigoplus_{\beta\in Q}\wt{(\mcV_\hbar)}_\beta \cong \wt \mcF_\hbar\otimes \C_\veps[Q],
\end{gather*}
where $\wt{(\mcV_\hbar)}_\beta=\prod_{n\leq n(\beta)}(\mcV_\hbar)_\beta$ and $\wt \mcF_\hbar=\widetilde{(\mcV_\hbar)}_0$.

Recall that $\mbX_i^\pm(z)=\sum_{k\in \Z}\mbX_i^\pm[k]z^{-k-1}$ are the vertex operators which determine the action of $DY_\hbar^\msc(\mfg)$ on $\mcV[\![\hbar]\!]$ (see \eqref{Xpm:exp}).
\begin{proposition}\label{P:Vexp}
For each $k\in \Z$ and $i\in I$, $\mbX_i^\pm[k]$ admits an expansion 
\begin{equation}\label{deg-exp}
\mbX_i^\pm[k]=\sum_{a\geq 0} \mbX_i^\pm[k,a] \hbar^a,
\end{equation}
with $\mbX_i^\pm[k,a]\in \End_\C\mcV$ of degree $(k-a,\pm \al_i)$. Consequently, $\mbX_i^\pm(z)\in (\End_{\C[\hbar]}\wt \mcV_\hbar)[\![z,z^{-1}]\!]$ and the assignment
\begin{equation}\label{wt:rho_h}
x_i^\pm(z)\mapsto \mbX_i^\pm(z)\quad \forall \; i\in I,
\end{equation}
also determines an algebra morphism $\wt \rho_\hbar:DY_\hbar^\msc(\mfg)\to \End_{\C[\hbar]} \wt \mcV_\hbar$.
\end{proposition}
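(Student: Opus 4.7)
The plan is to establish the degree expansion \eqref{deg-exp} by a direct grading computation, use it to show that each $\mbX_i^\pm[k]$ preserves $\wt\mcV_\hbar$, and then transfer to $\wt\mcV_\hbar$ the relations already verified in Theorem \ref{thm:Vrep}.

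First, I would set up the degree calculus by recording the bigrading shift of each building block: $\mcH_{j,-r}$ has operator degree $(-r,0)$, $\mcH_{jr}$ has degree $(r,0)$, the multiplication operator $e^{\pm\al_i}$ sends $\mcV_{m,\beta}$ to $\mcV_{m\mp(\beta,\al_i)-1,\beta\pm\al_i}$, and $z^{\partial_{\pm\al_i}}$ contributes a scalar $z^{\pm(\al_i,\beta)}$ on $\mcV_\beta$. Expanding the three exponential factors in $\mbX_i^\pm(z) = \pm A_i^\pm(z)B_i^\pm(z)e^{\pm\al_i}z^{\partial_{\pm\al_i}}$, together with the binomial expansions of $(z\mp\hbar)^r$ and $(z\mp\tfrac{\hbar}{2})^r$, a direct factor-by-factor inspection shows that every monomial contributing to $\mbX_i^\pm(z)$ is a scalar of the form $z^Z\hbar^A$ times an operator whose first-coordinate degree satisfies (first op.\ degree) $+\,Z+A = -1$, with second-coordinate degree $\pm\al_i$. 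Extracting the coefficient of $z^{-k-1}\hbar^a$ then yields the expansion \eqref{deg-exp} with $\mbX_i^\pm[k,a]$ of degree $(k-a,\pm\al_i)$.

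To extend $\mbX_i^\pm[k]$ to $\wt\mcV_\hbar$, I would apply it to a homogeneous $v\in\mcV_{m,\beta}$: by the degree expansion together with Lemma \ref{L:bounded}, $\mbX_i^\pm[k,a](v)\in\mcV_{m+k-a,\beta\pm\al_i}$ vanishes unless $a\ge m+k-n(\beta\pm\al_i)$, and distinct values of $a$ land in distinct first-coordinate components; hence $\mbX_i^\pm[k](v)=\sum_a \mbX_i^\pm[k,a](v)\hbar^a$ defines an element of $\widetilde{(\mcV_\hbar)}_{\beta\pm\al_i}$. For arbitrary $w\in\wt\mcV_\hbar$, the constraints $n+k-a=n'$, $a\ge 0$, $n\le n(\beta)$ bound $n$ to a finite range for each target degree $(n',\beta\pm\al_i)$, so only finitely many homogeneous pieces of $w$ contribute, making $\mbX_i^\pm[k]w$ a well-defined element of $\wt\mcV_\hbar$. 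An identical argument applied to the factors of $\mbH_i^\pm(z)$, using the binomial expansion of $(z\pm\tfrac{\hbar}{2})^{-r}$ and the Taylor expansion of $\bigl(\tfrac{1+\frac{\hbar}{2}z^{-1}}{1-\frac{\hbar}{2}z^{-1}}\bigr)^{\partial_{\al_i}}$, places $\mbH_i^\pm(z)$ in $(\End_{\C[\hbar]}\wt\mcV_\hbar)[\![z^{\pm 1}]\!]$ as well.

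Finally, by Remark \ref{Rem:gen}, $DY_\hbar^\msc(\mfg)$ is generated by $\{x_{ir}^\pm\}_{i\in I,r\in\Z}$, so \eqref{wt:rho_h} well-defines the action on generators. To see that the defining relations of Definition \ref{DY:def} hold in $\End_{\C[\hbar]}\wt\mcV_\hbar$, I would use that they were verified under $\rho_\hbar$ in Theorem \ref{thm:Vrep} as identities of formal series in $\End_{\C[\![\hbar]\!]}\mcV[\![\hbar]\!]$. Applied to any homogeneous vector in $\mcV$, each side produces only finitely many terms in each graded component of the target, so both sides agree as elements of $\wt\mcV_\hbar$; by $\C[\hbar]$-linearity and grading preservation, the identities extend to all of $\wt\mcV_\hbar$. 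The main obstacle is the cancellation in the first step: the identity (first op.\ degree) $+\,Z+A = -1$ depends delicately on the specific $\hbar$-shifts appearing in the arguments of $A_i^\pm$ and $\mbH_i^+$, and the whole construction hinges on it; once this is in place, the extension to $\wt\mcV_\hbar$ and the transfer of the relations are routine bookkeeping.
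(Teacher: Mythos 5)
Your proposal is correct and follows the same approach as the paper's proof: compute the $\hbar$-expansion and the degree identity directly, use Lemma \ref{L:bounded} to conclude that each $\mbX_i^\pm[k]$ sends $\wt\mcV_\beta$ into $\wt\mcV_{\beta\pm\al_i}$, and then invoke the fact that the coefficients of $x_i^\pm(z)$ generate $DY_\hbar^\msc(\mfg)$ (Remark \ref{Rem:gen}) so that the relations transfer from $\End_{\C[\![\hbar]\!]}\mcV[\![\hbar]\!]$. The paper's proof is terse at exactly the points you elaborate on (the degree bookkeeping and the transfer of the defining relations to $\End_{\C[\hbar]}\wt\mcV_\hbar$), and your explicit treatment of $\mbH_i^\pm(z)$ is, as the paper notes, redundant once Remark \ref{Rem:gen} is invoked, but otherwise the two arguments coincide.
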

\begin{proof}
 The first part of the proposition is proven directly by expanding $\mbX_i^\pm(z)$ as a formal series in $\hbar$. To see that 
 $\mbX_i^\pm[k]\in \End_{\C[\hbar]}\wt \mcV_\hbar$ for each $k\in \Z$, it suffices to prove that $\mbX_i^\pm[k]\wt \mcV_\beta\subset \wt \mcV_{\beta\pm \alpha_i}$ for each $\beta\in Q$. This is a straightforward consequence of Lemma \ref{L:bounded} and \eqref{deg-exp}.
 
 One may prove the analogous statements for $\mbH_i^\pm(z)$ in the same way, but as noted in Remark \ref{Rem:gen} the coefficients of $x_i^\pm(z)$ generate $DY_\hbar^\msc(\mfg)$ (with $\mbH_i^\pm(z)$ uniquely determined by \eqref{H+:un} and \eqref{H-:un}), and hence this is not necessary and we may conclude that \eqref{wt:rho_h} determines an algebra morphism $\wt \rho_\hbar:DY_\hbar^\msc(\mfg)\to \End_{\C[\hbar]} \wt \mcV_\hbar$. \qedhere
\end{proof}
Proposition \ref{P:Vexp} implies that $\mbX_i^\pm(z)$ can be evaluated at $\hbar=\zeta\in \C$ to produce a well-defined element 
\begin{equation*}
 \mbX_i^\pm(z,\zeta)=\mbX_i^\pm(z)_{\hbar\mapsto \zeta}\in (\End_\C\wt \mcV)[\![z,z^{-1}]\!].
\end{equation*}
We will write $\mbX_i^{\pm,\zeta}[k]$ for the evaluation of $\mbX_i^\pm[k]$ at $\hbar=\zeta$, so that $\mbX_i^\pm(z,\zeta)=\sum_{k\in \Z}\mbX_i^{\pm,\zeta}[k] z^{-k-1}$.

Let $\End_m\mcV$ denote the subspace of $\End_\C \mcV$ spanned by operators of degree $m\in \Z$. (Here we consider only the $\Z$-grading on $\mcV=\bigoplus_{n\in \Z}\mcV_n$ induced by its $(\Z\times Q)$-grading.) Consider the direct product $\prod_{m\in \Z} \End_m \mcV$. The subspace 
\begin{equation*}
 \wt \End_\C \mcV=\left\{\sum_{m\in \Z}A_m\,:\, A_m=0 \quad \forall \; m\gg 0\right\}\subset \prod_{m\in \Z} \End_m \mcV
\end{equation*}
is an algebra with multiplication that respects the grading. 
\begin{corollary}\label{C:spec}
 For each $\zeta\in \C$, $\wt \rho_\hbar$ induces a homomorphism of $\C$-algebras 
 \begin{equation*}
  \rho_\zeta:DY_\zeta^\msc(\mfg)\to \End_\C \wt \mcV,\quad x_i^\pm(z)\mapsto \mbX_i^\pm(z,\zeta) \quad \forall \;i\in I.
 \end{equation*}
 Moreover, for each $k\in \Z$ we have 
 \begin{equation}\label{C:spec-2}
  \mbX_i^{\pm,\zeta}[k]\in \left(\prod_{m\leq k} \End_m\mcV \right)\cap \End_\C \wt \mcV \subset \wt \End_\C\mcV,
 \end{equation}
 and hence $\rho_\zeta$ may be viewed as a morphism $\rho_\zeta:DY_\zeta^\msc(\mfg)\to \wt \End_\C \mcV$. 
\end{corollary}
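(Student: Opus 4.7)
The plan is to obtain $\rho_\zeta$ by composing $\wt\rho_\hbar$ of Proposition \ref{P:Vexp} with the specialization at $\hbar = \zeta$, and then to read off the claimed degree bounds from the $\hbar$-adic expansion of $\mbX_i^\pm[k]$.

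First, I would observe that because $\deg\hbar = (0,0)$, each graded piece of $\mcV_\hbar$ is a free $\C[\hbar]$-module, namely $(\mcV_\hbar)_{n,\beta} \cong \C[\hbar] \ot_\C \mcV_{n,\beta}$. Applying this componentwise inside each $\wt{(\mcV_\hbar)}_\beta = \prod_{n \leq n(\beta)}(\mcV_\hbar)_{n,\beta}$ and using that scalar multiplication by $(\hbar-\zeta)$ commutes with the product, the reduction modulo $\hbar - \zeta$ yields an isomorphism $\wt\mcV_\hbar/(\hbar-\zeta)\wt\mcV_\hbar \cong \wt\mcV$. Consequently every $A \in \End_{\C[\hbar]}\wt\mcV_\hbar$ descends to an operator $\bar A \in \End_\C\wt\mcV$, so composition with $\wt\rho_\hbar$ provides an algebra morphism $DY_\hbar^\msc(\mfg) \to \End_\C\wt\mcV$. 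Since $\hbar - \zeta$ is sent to $0$, this factors through $DY_\zeta^\msc(\mfg) = DY_\hbar^\msc(\mfg)/(\hbar - \zeta)$, producing the desired $\rho_\zeta$, which by construction satisfies $\rho_\zeta(x_i^\pm(z)) = \mbX_i^\pm(z, \zeta)$.

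Next, to establish \eqref{C:spec-2}, I would reindex the expansion from Proposition \ref{P:Vexp}. Since $\mbX_i^\pm[k,a]$ has $\Z$-degree $k - a$, setting $m = k - a$ gives
\[
\mbX_i^{\pm,\zeta}[k] \;=\; \sum_{m \leq k} \zeta^{k-m}\,\mbX_i^\pm[k,\,k - m],
\]
so each summand lies in $\End_m\mcV$ and the summands vanish for $m > k$. To see that this sum in fact defines an element of $\End_\C\wt\mcV$, I would check that for any $v \in \wt\mcV_\beta$ the $(n', \beta \pm \alpha_i)$-component of $\mbX_i^{\pm,\zeta}[k](v)$ is a \emph{finite} sum in $a$: by Lemma \ref{L:bounded} only components of $v$ in degrees $n \leq n(\beta)$ are nonzero, which forces $a \leq n(\beta) + k - n'$. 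This places $\mbX_i^{\pm,\zeta}[k]$ in the stated intersection $\bigl(\prod_{m \leq k}\End_m\mcV\bigr) \cap \End_\C\wt\mcV$.

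Finally, to view $\rho_\zeta$ as landing in $\wt\End_\C\mcV$, I would note that the previous paragraph embeds each generator $\mbX_i^{\pm,\zeta}[k]$, together with the image $1$ of $\msc$, into $\wt\End_\C\mcV$. Because $\wt\End_\C\mcV$ is closed under finite sums and finite products (degree bounded above is preserved), and because the composition in $\wt\End_\C\mcV$ agrees with ordinary composition in $\End_\C\wt\mcV$ on operators that shift the $Q$-grading by a single fixed element — here $\pm\alpha_i$ on each generator, hence a fixed element of $Q$ on each monomial in the generators — the subalgebra of $\End_\C\wt\mcV$ generated by $1$ and the $\mbX_i^{\pm,\zeta}[k]$ lies in (the image of) $\wt\End_\C\mcV$, giving the claimed factorization. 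The main bookkeeping step is the degree analysis of $\mbX_i^{\pm,\zeta}[k]$, and the only subtlety — ensuring the formal-series expression for $\mbX_i^{\pm,\zeta}[k]$ genuinely acts on the completion $\wt\mcV$ rather than only on $\mcV$ — reduces immediately to Lemma \ref{L:bounded}.
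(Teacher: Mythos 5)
Your proposal is correct and follows exactly the route the paper leaves implicit when it says that Proposition \ref{P:Vexp} implies the evaluation at $\hbar=\zeta$ is well-defined: descend $\wt\rho_\hbar$ through $\wt\mcV_\hbar/(\hbar-\zeta)\wt\mcV_\hbar\cong\wt\mcV$ (valid since $(\hbar-\zeta)$ is principal, so quotienting commutes with the product defining the completion), then read off the degree bound \eqref{C:spec-2} by reindexing the $\hbar$-expansion of $\mbX_i^\pm[k]$ and appealing to Lemma \ref{L:bounded}. Your reindexing $\mbX_i^{\pm,\zeta}[k]=\sum_{m\le k}\zeta^{k-m}\mbX_i^\pm[k,k-m]$ and the finiteness argument for each graded component are precisely what is needed; no gap.
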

Henceforth, we will adapt the viewpoint that $\rho_\zeta$ has codomain $\wt \End_\C \mcV$, and we will focus almost exclusively on the case where $\zeta=1$, in which case we shall write $\rho=\rho_1$.  

By composing $\rho$ with $\iota:Y(\mfg)\to DY^\msc(\mfg)$ from Proposition \ref{P:iota}, we obtain an algebra morphism 
\begin{equation}\label{varrho}
 \varrho=\rho\circ\iota: Y(\mfg)\to \wt \End_\C \mcV.
\end{equation}
The algebra $\wt \End_\C\mcV$ admits a $\Z$-filtration $\{\mbF_k(\mcV)\}_{k\in \Z}$ given by 
\begin{equation*}
 \mbF_k(\mcV)=\prod_{m\leq k}\End_m\mcV,
\end{equation*} 
and we have
\begin{equation*}
 \mathrm{gr}_\Z \wt \End_\C\mcV=\bigoplus_{m\in \Z} \mbF_m(\mcV)/\mbF_{m-1}(\mcV)\cong \bigoplus_{m\in \Z}\End_m \mcV\subset \End_\C \mcV.  
\end{equation*}
Set $\mathring{\mbX}_i^\pm(z)=\sum_{k\in \Z}\mbX_i^\pm[k,0]z^{-k-1}\in (\End_\C\mcV)[\![z^{\pm 1}]\!]$, where $\mbX_i^\pm[k,0]$ is as in \eqref{deg-exp}. Explicitly, 
\begin{equation}\label{circ-X}
 \mathring{\mbX}_i^\pm(z)=\pm \exp\!\left(\pm\sum_{r>0}\frac{\wt\mcH_{i,-r}}{r}z^r\right) \exp\!\left(\mp \sum_{r>0} \frac{\mcH_{ir}}{r}z^{-r}\right)e^{\pm \al_i}z^{\partial_{\pm \al_i}} 
\end{equation}
with $\wt \mcH_{i,-r}=\sum_{j\in I}(\alpha_i,\alpha_j) \mcH_{j,-r}$ for each $i\in I$ and $r>0$. 
Since $\iota$ is a filtered morphism, the relation \eqref{C:spec-2} of Corollary \ref{C:spec} together with the expansion \eqref{deg-exp} implies the following. 
\begin{corollary}\label{C:rho_0}
$\rho$ and $\varrho$ are $\Z$-filtered morphisms, and the composition of $\gr \rho: \mathrm{gr}_\Z DY^\msc(\mfg)\to \End_\C\mcV$ with the morphism $\phi_D:U(\mft)\to \mathrm{gr}_\Z DY^\msc(\mfg)$ of Proposition \ref{P:wPBW-DY} is the representation
\begin{equation}\label{rho_0}
 \rho_0:U(\mft)\to \End_\C\mcV, \quad X_i^\pm(z)\mapsto \mathring{\mbX}_i^\pm(z) \quad \forall \; i\in I. 
\end{equation}
\end{corollary}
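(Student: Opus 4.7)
The plan is to verify that $\rho$ respects the $\Z$-filtrations on $DY^\msc(\mfg)$ and $\wt\End_\C\mcV$, compute its associated graded morphism on the generators $\bar{x}_{ir}^\pm$, and finally compose with $\phi_D$ to match the prescribed formula for $\rho_0$.

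For the filtered property, I would appeal directly to Corollary \ref{C:spec}: the generator $x_{ir}^\pm$ of $DY^\msc(\mfg)$ lies in filtration degree $r$, and by \eqref{C:spec-2}, $\rho(x_{ir}^\pm)=\mbX_i^{\pm,1}[r]\in\mbF_r(\mcV)$. By Remark \ref{Rem:gen}, the set $\{x_{ir}^\pm\}_{i\in I,r\in\Z}\cup\{\msc\}$ generates $DY^\msc(\mfg)$, and since $\rho(\msc)=1\in\End_0\mcV\subset\mbF_0(\mcV)$, it follows that $\rho(\mbF_k^D)\subset\mbF_k(\mcV)$ for every $k\in\Z$. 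The morphism $\varrho=\rho\circ\iota$ is then filtered because $\iota$ is a filtered algebra homomorphism (as noted below Proposition \ref{P:iota}).

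To identify $\gr\rho$ on $\bar{x}_{ir}^\pm$, the expansion \eqref{deg-exp} gives $\mbX_i^{\pm,1}[r]=\sum_{a\geq 0}\mbX_i^\pm[r,a]$ with $\mbX_i^\pm[r,a]\in\End_{r-a}\mcV$, so the image of $\mbX_i^{\pm,1}[r]$ in $\mbF_r(\mcV)/\mbF_{r-1}(\mcV)\cong\End_r\mcV$ is precisely the top-degree component $\mbX_i^\pm[r,0]$, namely the $\hbar^0$-coefficient of $\mbX_i^\pm[r]$. Equivalently, $\mbX_i^\pm[r,0]$ is the coefficient of $z^{-r-1}$ in the specialization $\mbX_i^\pm(z)|_{\hbar=0}$. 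A short calculation from \eqref{Xpm:exp}, using the simply-laced identity $2\mcH_{i,-r}-\sum_{j\in N(i)}\mcH_{j,-r}=\wt\mcH_{i,-r}$ (valid since $(\al_i,\al_i)=2$ and $(\al_i,\al_j)=-1$ for $j\in N(i)$), shows that this specialization is exactly $\mathring{\mbX}_i^\pm(z)$ as given in \eqref{circ-X}.

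Composing with $\phi_D$ and using $\phi_D(X_{ir}^\pm)=\bar{x}_{ir}^\pm$, we conclude that $\gr\rho\circ\phi_D$ sends $X_i^\pm(z)$ to $\mathring{\mbX}_i^\pm(z)$, in agreement with the stated formula for $\rho_0$. Since the relation \eqref{t:X+X-} expresses $H_i(z)$ and $\mathbf{C}$ as the $\delta(z,w)$- and $\delta_z(z,w)$-coefficients of $[X_i^+(z),X_i^-(w)]$, the subalgebra generated by $\{X_{ir}^\pm\}_{i\in I,r\in\Z}$ already contains all $H_{ir}$ and $\mathbf{C}$, so agreement on the series $X_i^\pm(z)$ extends to all of $U(\mft)$. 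No serious obstacle is expected: the whole argument reduces to extracting top-degree components from the $\hbar$-expansions set up in Proposition \ref{P:Vexp}, with the only delicate point being the bookkeeping that identifies $\mbX_i^\pm(z)|_{\hbar=0}$ with the operator $\mathring{\mbX}_i^\pm(z)$ defined via the twisted generators $\wt\mcH_{i,-r}$.
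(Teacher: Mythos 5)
Your proposal is correct and follows the same route the paper indicates (namely, combining the filtered property of $\iota$, relation \eqref{C:spec-2} of Corollary \ref{C:spec}, and the expansion \eqref{deg-exp}), merely filling in the details the paper leaves implicit—in particular the computation identifying $\mbX_i^\pm(z)|_{\hbar=0}$ with $\mathring{\mbX}_i^\pm(z)$ via the identity $2\mcH_{i,-r}-\sum_{j\in N(i)}\mcH_{j,-r}=\wt\mcH_{i,-r}$, and the observation that the coefficients of $X_i^\pm(z)$ generate $\mft$ so agreement on them determines the morphism.
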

\begin{remark}
 Here it is understood that the $\Z_{\geq 0}$-filtration $\{\mbF_k\}_{k\geq 0}$ on $Y(\mfg)$ is extended to a $\Z$-filtration by setting 
 $\mbF_k=\{0\}$ for all $k<0$. The representation of $\mft$ given by \eqref{rho_0} can be obtained directly from $\rho_\hbar$ (see \eqref{Vrep}) by specializing $\hbar\mapsto 0$, or from $\rho_\zeta$ (see Corollary \ref{C:spec}) by taking $\zeta=0$. However, the $\Z$-filtration on $\wt \End_\C\mcV$ will play a crucial role in Section \ref{sec:PBW}.  
\end{remark}

%%%%%%%%%%%%%%%%%%%%%%%%%%%%%%%%%%%%%%%%%%%%%%%%%%%%%%%%%%%%%%%%%%%%%%%%
% Subsection: The \mft-modules \mcV and \mcV_\mbA
%%%%%%%%%%%%%%%%%%%%%%%%%%%%%%%%%%%%%%%%%%%%%%%%%%%%%%%%%%%%%%%%%%%%%%%%
\subsection{The $\mft$-modules $\mcV$ and $\mcV_\mbA$} 

By Corollary \ref{C:rho_0}, $\mcV$ admits the structure of a $\mft$-module with action encoded by the vertex operators $\mathring \mbX_i^\pm(z)$ defined in \eqref{circ-X}. When the Cartan matrix $\mbA$ is not invertible, this representation differs from that obtained from the classical construction of vertex representations \cite{FrKa,MRY}. In this subsection we explain the relation between the two constructions. 

We begin by recalling the classical setting. By \eqref{t:HH}, the Lie subalgebra of $\mft$ generated by the coefficients of the series $\{H_i(z)\}_{i\in I}$ is a homomorphic image of the following Heisenberg algebra.
\begin{definition}\label{D:h_A}
The Heisenberg Lie algebra $\mfH_\mbA$ associated to the Cartan matrix $\mbA$ (equivalently, to the root lattice $Q$) is the Lie algebra over $\C$ with basis $\{H_{ir}\}_{i\in I,r\in \Z}\cup\{\mathbf{C}\}$ subject to the defining Lie bracket relations
\begin{equation*}
 [H_{ir},\mathbf{C}]=0\quad \text{ and }\quad [H_{ir},H_{j,-s}]=r(\al_i,\al_j)\delta_{rs}\mathbf{C} \quad \forall \; i,j\in I \; \text{ and }\; r,s\in \Z. 
\end{equation*}
\end{definition}
\begin{NB}
 Note that we have included the family of elements $\{H_{i0}\}_{i\in I}$, which are central.
\end{NB}

For each fixed $\lambda\in Q$, there is a natural action of $\mfH_\mbA$ on the polynomial algebra $\C[H_{i,-r}]_{i\in I,r>0}$ given by 
\begin{equation*}
 H_{j,-s}(f)=H_{j,-s}f,\quad \mathbf{C}(f)=f,\quad H_{j0}(f)=(\alpha_j,\lambda)f,\quad H_{js}(f)=\partial_{js}(f)
\end{equation*}
for all $f\in \C[H_{i,-r}]_{i\in I,r>0}$, $j\in I$ and $s>0$, where $\partial_{js}$ is the derivation defined uniquely by  
\begin{equation*}
 \partial_{js}(H_{i,-r})=s(\al_i,\al_j)\delta_{sr}\quad \forall\; s>0\; \text{ and }\; i\in I.
\end{equation*}
We denote $\C[H_{i,-r}]_{i\in I,r>0}$, equipped with this $\mfH_\mbA$-module structure, by $\mcF_\mbA^\lambda$.

\begin{NB} 
The $\mfH_\mbA$-module $\mathscr{F}_\lambda$ is irreducible if and only if the form $(\,,\,)$ is non-degenerate on $Q$. 
This occurs precisely when the Cartan matrix $\mbA$ is invertible.
\end{NB}

Now define the vector space $\mcV_\mbA$ by 
\begin{equation*}
 \mcV_\mbA=\C[H_{i,-r}]_{i\in I,r>0}\ot \C_\veps[Q].
\end{equation*}
After identifying $\C[H_{i,-r}]_{i\in I,r>0}\ot \C e^\lambda$ with $\mcF_\mbA^\lambda$, the space $\mcV_\mbA$ becomes an $\mfH_{\mbA}$-module isomorphic to $\bigoplus_{\lambda\in Q}\mcF_\mbA^\lambda$. To extend this to a $\mft$-module structure, define for each $\al=\sum_{i\in I}n_i\al_i\in Q$ operators $\{H_{\al,r}\}_{r\in \Z}$ on $\mcV_\mbA$ by 
$H_{\al,r}=\sum_{i\in I}n_iH_{i,r}$. 
We then set 
\begin{equation*}
 \Gamma_\al^\pm(z)=\exp\left( \mp \sum_{r>0}\frac{H_{\al,\pm r}}{r}z^{\mp r}\right) \quad \forall\; \alpha\in Q,\\
\end{equation*}
and introduce vertex operators $\mdH_{\al}(z),\mdX_\al(z)\in (\End_\C \mcV_\mbA)[\![z^{\pm 1}]\!]$ by
\begin{equation*}
 \mathds{H}_\al(z)=\sum_{r\in \Z}H_{\alpha,r}z^{-r-1} \quad \text{ and }\quad \mathds{X}_\al(z)=\Gamma_\al^-(z)\Gamma_\al^+(z)e^\al z^{\partial_{\al}}\quad \forall \al\in Q.
\end{equation*}
\begin{proposition}\label{P:VrepA}
Set $\mathds{X}_i^\pm(z)=\pm \mathds{X}_{\pm\al_i}(z)$ and $\mathds{H}_i(z)=\mathds{H}_{\al_i}(z)$  for all $i\in I$. Then the assignment 
 \begin{equation}\label{Vrep:A}
 X_i^\pm(z)\mapsto \mathds{X}_i^\pm(z),\; H_i(z)\mapsto \mathds{H}_i(z) \quad \forall \; i\in I,\quad \mathbf{C}\mapsto 1
 \end{equation}
extends to a homomorphism of algebras $\rho_\mbA:U(\mft)\mapsto \End_\C \mcV_\mbA$. 

\begin{NB} 
The resulting $\mft$-module $\mcV_\mbA$ is irreducible if and only if $\mbA$ is invertible. 
\end{NB}

\end{proposition}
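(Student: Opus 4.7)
The statement is the classical vertex operator construction of Frenkel-Kac~\cite{FrKa} (extended to affine Kac-Moody algebras in~\cite{MRY}), and my plan is to verify it by checking that each defining relation of Definition~\ref{D:t} is preserved by the assignment \eqref{Vrep:A}. Relation \eqref{t:C} is immediate since $\mathbf{C}$ acts as the identity. For \eqref{t:HH}, I would use the Heisenberg bracket $[H_{ir}, H_{js}] = r(\al_i, \al_j)\delta_{r,-s}$ together with the identity $\sum_{r\in\Z} r\, z^{-r-1}w^{r-1} = -\delta_z(z,w)$, which is a consequence of \eqref{eq:deltaexp}.

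For \eqref{t:HX}, I would first establish the intertwining identity $[H_{ir}, \mathds{X}_\al(w)] = (\al_i, \al)\, w^r \mathds{X}_\al(w)$ valid for all $r \in \Z$ and $\al \in Q$. For $r \neq 0$ this follows from the Baker-Campbell-Hausdorff identity $[A, e^B] = [A,B]\, e^B$ (applicable because $[H_{ir}, H_{\al,-r}]$ is a scalar) applied to the factor $\Gamma_\al^\pm(w)$ containing $H_{\al,\mp r}$; for $r = 0$ it follows from the fact that $H_{i0}$ measures the $(\al_i, \cdot)$-weight on $\C_\veps[Q]$ while the other factors of $\mathds{X}_\al(w)$ commute with $H_{i0}$. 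Summing over $r \in \Z$ via \eqref{eq:deltaexp} then yields \eqref{t:HX}.

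For the remaining relations, the main tool is the operator product expansion
\begin{equation*}
\mathds{X}_\al(z)\mathds{X}_\beta(w) = \veps(\al,\beta)(z-w)^{(\al,\beta)}\, :\!\mathds{X}_\al(z)\mathds{X}_\beta(w)\!:,
\end{equation*}
which is the $\hbar = 0$ analogue of \eqref{NO:1}--\eqref{NO:2} and arises from the commutation $\Gamma_\al^+(z)\Gamma_\beta^-(w) = (1-z^{-1}w)^{(\al,\beta)}\Gamma_\beta^-(w)\Gamma_\al^+(z)$. Since the normal-ordered product is symmetric under swapping $(\al,z)\leftrightarrow(\beta,w)$ and $\veps(\beta,\al) = (-1)^{(\al,\beta)}\veps(\al,\beta)$, the commutator $[\mathds{X}_\al(z), \mathds{X}_\beta(w)]$ vanishes identically whenever $(\al,\beta)$ is a non-negative integer. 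For \eqref{t:XX} we have $(\al,\beta) = (\al_i, \al_j) \in \{0, -1, 2\}$ in the simply-laced case, and the only non-trivial commutator arises when $(\al_i,\al_j) = -1$, in which case multiplication by $(z-w)$ kills the resulting $\delta(z,w)$ via \eqref{delta}. For \eqref{t:X+X-} we take $(\al,\beta) = (\al_i, -\al_j)$ so that $(\al,\beta) = -(\al_i,\al_j) \in \{0, 1, -2\}$; the commutator vanishes when $i \neq j$, and for $i = j$ the order-two pole at $z = w$ unpacks, via the identities \eqref{eq:deltaexp} and $\delta_z(z,w) = \tfrac{d}{dz}\delta(z,w)$, into exactly $\delta(z,w)\mathds{H}_i(z) - \delta_z(z,w)$, by a calculation parallel to the $\hbar = 0$ specialization of the verification of \eqref{DY:xxh} in the proof of Theorem~\ref{thm:Vrep}. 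The Serre relations \eqref{t:Serre} follow from an analogous symmetry/antisymmetry argument in the spectator variables, mirroring the verification of \eqref{DY:serre} in that same theorem. The main technical obstacle is the delta-function bookkeeping for \eqref{t:X+X-} at $i = j$, which is essentially the $\hbar \to 0$ specialization of the corresponding calculation already carried out for $DY_\hbar^\msc(\mfg)$.
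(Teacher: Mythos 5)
Your proposal is correct and takes the same approach the paper points to: the paper's proof of Proposition~\ref{P:VrepA} is a citation to the standard Frenkel--Kac/MRY vertex-operator computation (cf.\ \cite[Thm.~14.8]{Ka}, \cite[Prop.~4.3]{MRY}), described as "analogous to the proof of Theorem~\ref{thm:Vrep}," and your sketch is a faithful rendering of exactly that verification — the Heisenberg commutation for \eqref{t:HH}--\eqref{t:HX}, the OPE $\mathds{X}_\al(z)\mathds{X}_\beta(w)=\veps(\al,\beta)(z-w)^{(\al,\beta)}:\!\mathds{X}_\al(z)\mathds{X}_\beta(w)\!:$ together with the cocycle sign $\veps(\beta,\al)=(-1)^{(\al,\beta)}\veps(\al,\beta)$ for \eqref{t:XX}--\eqref{t:X+X-}, and the Taylor/delta-function extraction of $\delta(z,w)\mathds{H}_i(z)-\delta_z(z,w)$ at $i=j$.
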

\begin{proof}
 Although, to the best of our knowledge, the statement of the proposition has only been written down explicitly for $\mbA$ of finite and of affine type \cite{FrKa,MRY}, the argument used to prove the above proposition for $\mft$ associated to the Cartan matrix of an arbitrary simply laced Kac-Moody algebra is the same, and analogous to the proof Theorem \ref{thm:Vrep}. We refer the reader to \cite[Thm.\ 14.8]{Ka}, \cite[Prop.\ 4.3]{MRY} and \cite[\S 6.5]{LeLi} for complete details. The result may also be deduced from \cite[Thm.\ 3.1]{Ji2}.\qedhere
\end{proof}
\begin{remark}\label{R:arb}
 Suppose now that $\mbA$ is the Cartan matrix of an arbitrary symmetric Kac-Moody algebra (not constrained by the condition \eqref{A-cond}), and let $\mft_\mbA$ be the Lie algebra defined identically to $\mft$ (see Definition \ref{D:t}), but with \eqref{t:XX} replaced by 
 \begin{equation*}
  (z-w)^{-a_{ij}}[X_i^\pm(z),X_j^\pm(w)]=0 \quad \forall \; i,j\in I. 
 \end{equation*}
 Then the assignment \eqref{Vrep:A} determines an algebra homomorphism $U(\mft_\mbA)\to \End_\C\mcV_\mbA$. The added difficulty in proving this statement is verifying that \eqref{Vrep:A} preserves the Serre relation \eqref{t:Serre} when $a_{ij}<-1$. This can again be deduced from \cite{Ji2}, although it may also be proven directly using elementary properties of the formal delta function $\delta({z},{w})$ and its partial derivatives. 
 
 \begin{NB}
  The proof boils down to the following two facts: 
  \begin{enumerate}
   \item Let $f(z)\in R[\![z^\pm]\!]$ for some complex vector space $R$. Then 
 \begin{equation*}
  f(z)\delta_{z}^{(n)}({z},{w})=\sum_{k=0}^n (-1)^k \binom{n}{k} \frac{d^a}{dz^a}f(z)|_{z=w} \delta_z^{(n-a)}({z},{w})\quad \forall n\geq 0.
 \end{equation*}
 
  \item For each $n\geq 0$, $(z-w)^{n+1}\delta_{z}^{(n)}({z},{w})=0$. 
  \end{enumerate}
  There is a complete proof in our shared folder, but I guess we don't need it anymore. 
 \end{NB}
\end{remark}
We now turn to relating $\mcV_\mbA$ with the $\mft$-module $\mcV$ from Corollary \ref{C:rho_0}. 
Recall from Definition \ref{D:h} that $\mfH$ is the Heisenberg Lie algebra associated to the trivial lattice $\Z^{|I|}$. For each $k\in \Z_{\neq 0}$, set 
\begin{equation*}
 \mfH_\mbA^{(k)}=\bigoplus_{i\in I}\C H_{ik}\quad \text{ and }\quad \mfH^{(k)}=\bigoplus_{i\in I}\C \mcH_{ik}.
\end{equation*}
Similarly, we set 
$\mfH_\mbA^{(0)}=\bigoplus_{i\in I}\C H_{i0} \oplus \C\cdot \mathbf{C}$ and $\mfH^{(0)}=\C\cdot \mathcal{C}$. Let $\mfH_\mbA^\prime$ be the Lie subalgebra of $\mfH_\mbA$ defined by
\begin{equation*}
 \mfH_\mbA^\prime=\bigoplus_{k\neq 0} \mfH_\mbA^{(k)} \oplus \C \cdot \mathbf{C}=\bigoplus_{k\in \Z} \mfH_\mbA^{\prime (k)}, \quad \text{ where } \; \mfH_A^{\prime (k)}=\mfH_\mbA^{(k)}\cap \mfH_\mbA^\prime.
\end{equation*}
In addition, we denote $\bigoplus_{k\geq 0}\mfH_A^{\prime (k)}$ by $\mfH_\mbA^+$ and 
$\bigoplus_{k< 0}\mfH_A^{\prime (k)}$ by $\mfH_\mbA^-$, and define $\mfH^\pm$ analogously. 
The following lemma is straightforward.
\begin{lemma} \label{L:h_A->h}
\leavevmode
\begin{enumerate}
 \item The assignment 
 \begin{equation*}
 \varphi_\mbA: \mathbf{C}\mapsto \mcC, \quad H_{ir}\mapsto \wt H_{ir}=\begin{cases}
                                                                       \sum_{j\in I}(\alpha_i,\alpha_j)\mcH_{jr} \quad &\text{ if }\; r<0,\\
                                                                       \mcH_{ir} \quad &\text{ if }\; r>0,\\
                                                                      \end{cases}
 \end{equation*}
 extends to a morphism of graded Lie algebras $\varphi_\mbA:\mfH_\mbA^\prime \to \mfH$. 
 \item For each $r<0$, $\varphi_\mbA|_{\mfH^{(r)}_\mbA}:\mfH^{(r)}_\mbA\to \mfH^{(r)}$ has matrix equal to $\mbA$ with respect to the bases $\{H_{ir}\}_{i\in I}\subset \mfH_\mbA^{(r)}$ and $\{\mcH_{ir}\}_{i\in I}\subset \mfH_\mbA^{(r)}$. Consequently, 
 \begin{equation*}
 \varphi_\mbA|_{\mfH_\mbA^-}:\mfH_\mbA^-\to \mfH^-
 \end{equation*}
 is an isomorphism if and only if $\mbA$ is invertible, and the same is true for $\varphi_\mbA$. 
\end{enumerate}
\end{lemma}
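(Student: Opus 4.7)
The plan is to verify that $\varphi_\mbA$ is compatible with the defining brackets of $\mfH_\mbA^\prime$ and then to read off the matrix statement directly from the explicit formula. Since $\mathbf{C}$ is central in $\mfH_\mbA$ and $\mcC$ is central in $\mfH$, the only nontrivial relation to check is $[H_{ir}, H_{j,-r}] = r(\alpha_i, \alpha_j)\mathbf{C}$ for $r \neq 0$. Taking $r > 0$, a direct computation gives
\[
[\varphi_\mbA(H_{ir}), \varphi_\mbA(H_{j,-r})] = \Big[\mcH_{ir}, \sum_{k\in I}(\alpha_j,\alpha_k)\mcH_{k,-r}\Big] = \sum_{k\in I}(\alpha_j,\alpha_k)\, r\,\delta_{ik}\, \mcC = r(\alpha_i,\alpha_j)\mcC,
\]
which matches $\varphi_\mbA\big(r(\alpha_i,\alpha_j)\mathbf{C}\big)$, using the symmetry of $(\,,\,)$. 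The case $r<0$ is the same computation with roles exchanged. Degree preservation is immediate from $\deg \wt H_{ir} = r$ and $\deg \mcC = 0$.

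For part (2), the simply-laced hypothesis gives $(\alpha_i,\alpha_i)=2$, so the symmetrized form matrix $\big((\alpha_i,\alpha_j)\big)_{i,j\in I}$ coincides with $\mbA$. The defining formula then shows that the matrix of $\varphi_\mbA|_{\mfH_\mbA^{(r)}}\colon \mfH_\mbA^{(r)} \to \mfH^{(r)}$ for $r<0$, with respect to the bases $\{H_{ir}\}_i$ and $\{\mcH_{jr}\}_j$, is exactly $\mbA$. Consequently the restriction is an isomorphism on each such graded piece iff $\mbA$ is invertible, and summing over $r<0$ yields the claim for $\mfH_\mbA^- \to \mfH^-$.

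For the full map, note that $\varphi_\mbA|_{\mfH_\mbA^+}$ is the identification $H_{ir}\mapsto \mcH_{ir}$ on each graded component (a bijection in each degree $r>0$), and $\mathbf{C}\mapsto \mcC$ is an isomorphism on the central piece. Since $\varphi_\mbA$ respects the graded decomposition $\mfH_\mbA^\prime = \mfH_\mbA^- \oplus \C\,\mathbf{C} \oplus \mfH_\mbA^+$, it follows that $\varphi_\mbA$ is an isomorphism iff its restriction to $\mfH_\mbA^-$ is, which by the previous paragraph happens iff $\mbA$ is invertible. I do not anticipate any substantive obstacle: the content of the lemma is just the computation $[\mcH_{ir}, \sum_k(\alpha_j,\alpha_k)\mcH_{k,-r}] = r(\alpha_i,\alpha_j)\mcC$ packaged together with the simply-laced identification of $\mbA$ with the Gram matrix of the $\alpha_i$.
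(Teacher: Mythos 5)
Your proof is correct, and since the paper states Lemma~\ref{L:h_A->h} without supplying a proof, there is nothing to compare against; the direct computation you give is the natural and essentially unique argument. One small notational slip: the paper defines $\mfH_\mbA^+=\bigoplus_{k\geq 0}\mfH_\mbA^{\prime(k)}$, which already contains the degree-zero piece $\C\,\mathbf{C}$, so the decomposition should read $\mfH_\mbA^\prime=\mfH_\mbA^-\oplus\mfH_\mbA^+$ rather than $\mfH_\mbA^-\oplus\C\,\mathbf{C}\oplus\mfH_\mbA^+$; this does not affect the substance of your block-diagonal argument. You also implicitly use the symmetry of $\mbA$ when identifying the matrix of $\varphi_\mbA|_{\mfH_\mbA^{(r)}}$ (columns are indexed by $i$ and give $(a_{ij})_j$, so a priori one gets $\mbA^T$), which is harmless since $\mbA$ is symmetric in the simply laced case but worth stating explicitly.
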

By the lemma, $\varphi_\mbA|_{\mfH_\mbA^-}:\mfH_\mbA^-\to \mfH^-$ induces an algebra morphism 
\begin{equation*}
 \Phi_\mbA:U(\mfH_\mbA^-)\to U(\mfH^-)
\end{equation*}
which is invertible precisely when $\mbA$ is. After identifying $U(\mfH_\mbA^-)$ and $U(\mfH^-)$ with the Fock space representations 
$\C[H_{i,-r}]_{i\in I,r>0}$ and $\C[\mcH_{i,-r}]_{i\in I,r>0}$ of $\mfH_\mbA^\prime$ and $\mfH$, respectively, and equipping $\C[\mcH_{i,-r}]_{i\in I,r>0}$ with the structure of a $\mfH_\mbA^\prime$-module via $\varphi_\mbA$, $\Phi_\mbA$ becomes a morphism of $\mfH_\mbA^\prime$-modules. This discussion leads us to the following result. 
\begin{proposition}\label{P:V_A->V}
The $\C$-algebra morphism 
\begin{equation*}
 \Phi_\mbA\otimes \mathrm{id}: \C[H_{i,-r}]_{i\in I,r>0}\otimes \C_\veps[Q]\to \C[\mcH_{i,-r}]_{i\in I,r>0}\otimes \C_\veps[Q]
\end{equation*}
is a morphism of $\mft$-modules $\mcV_\mbA\to \mcV$. It is an isomorphism precisely when $\mbA$ is invertible. 
\end{proposition}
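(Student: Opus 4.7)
The plan is to verify that $\Phi_\mbA \otimes \mathrm{id}$ intertwines the $\mft$-actions given by $\rho_\mbA$ and $\rho_0$, and then to deduce the isomorphism statement from Lemma \ref{L:h_A->h}(2). Since $\mathbf{C}$ acts as the identity on both modules and, by the defining relation \eqref{t:X+X-}, the $H_i(z)$-action is determined by the $X_i^\pm(z)$-actions together with that of $\mathbf{C}$, it suffices to verify the intertwining on the generators $X_i^\pm(z)$ for each $i \in I$.

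For this, the key observation is that Lemma \ref{L:h_A->h} upgrades to the assertion that $\Phi_\mbA$ is a morphism of $\mfH_\mbA^\prime$-modules from the Fock space $\C[H_{i,-r}]_{i\in I,r>0}$ to $\C[\mcH_{i,-r}]_{i\in I,r>0}$ (the latter being viewed as an $\mfH_\mbA^\prime$-module via $\varphi_\mbA$). On creation operators this is the content of the definition of $\Phi_\mbA$; for the annihilation operators $H_{is}$ with $s>0$, one checks on the generators $H_{j,-r}$ that $\Phi_\mbA(\partial_{is}(H_{j,-r})) = \mcH_{is}(\wt \mcH_{j,-r})$, both sides being equal to $s(\alpha_i,\alpha_j)\delta_{rs}$, and one extends to the full Fock space by the Leibniz rule. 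Tensoring with $\mathrm{id}$ on $\C_\veps[Q]$, the vertex operator $\mathds X_i^+(z) = \Gamma_{\al_i}^-(z)\Gamma_{\al_i}^+(z)e^{\al_i}z^{\partial_{\al_i}}$ is transported under $\Phi_\mbA\otimes\mathrm{id}$ to the operator obtained by replacing each $H_{i,-r}$ with $\wt \mcH_{i,-r}$ inside $\Gamma_{\al_i}^-(z)$ and each $H_{ir}$ with $\mcH_{ir}$ inside $\Gamma_{\al_i}^+(z)$, while the lattice factor $e^{\al_i}z^{\partial_{\al_i}}$ is preserved verbatim. Comparing with \eqref{circ-X}, this is precisely $\mathring{\mbX}_i^+(z)$. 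The case of $X_i^-(z)$ proceeds in the same way.

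For the isomorphism claim, since $\mathrm{id}$ on $\C_\veps[Q]$ is an isomorphism, $\Phi_\mbA \otimes \mathrm{id}$ is an isomorphism if and only if $\Phi_\mbA : U(\mfH_\mbA^-)\to U(\mfH^-)$ is. Because $\mfH_\mbA^-$ and $\mfH^-$ are abelian Lie algebras (the bracket in $\mfH_\mbA$ pairs elements of positive and negative degree only), their universal enveloping algebras coincide with the corresponding symmetric algebras on the underlying vector spaces. Consequently, $\Phi_\mbA$ is an isomorphism if and only if $\varphi_\mbA|_{\mfH_\mbA^-}$ is an isomorphism of vector spaces, which by Lemma \ref{L:h_A->h}(2) is equivalent to the invertibility of $\mbA$.

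I expect the only genuine subtlety to be the bookkeeping of the two distinct conventions for the Heisenberg generators, namely $\mfH_\mbA$ (associated to the root lattice $Q$) versus $\mfH$ (associated to the trivial lattice $\Z^{|I|}$); the asymmetric prescription $H_{i,-r}\mapsto \wt\mcH_{i,-r}$ and $H_{ir}\mapsto \mcH_{ir}$ is forced by the requirement that the canonical commutation relations be respected, and it is exactly this asymmetry that accounts geometrically for the failure of the map to be an isomorphism in the non-invertible case.
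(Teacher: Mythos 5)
Your proof is correct and follows essentially the same route as the paper: reduce the intertwining claim to the statement that $\Phi_\mbA$ is a morphism of $\mfH_\mbA^\prime$-modules, then match $\mdX_i^\pm(z)$ against $\mathring{\mbX}_i^\pm(z)$ using the explicit prescription $H_{i,-r}\mapsto\wt\mcH_{i,-r}$, $H_{ir}\mapsto\mcH_{ir}$, and deduce the invertibility criterion from Lemma~\ref{L:h_A->h}(2). The paper presents this more tersely, delegating the $\mfH_\mbA^\prime$-module-morphism property and the invertibility equivalence to the discussion preceding the proposition; you have simply unpacked those steps (the Leibniz-rule verification, the observation that $H_i(z)$ is recovered from $X_i^\pm(z)$ and $\mathbf{C}$ via \eqref{t:X+X-}, and the fact that $U(\mfH_\mbA^-)=\Sym(\mfH_\mbA^-)$ since $\mfH_\mbA^-$ is abelian), each of which is accurate.
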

\begin{proof}
 Lemma \ref{L:h_A->h} and the discussion following it prove that $\Phi_\mbA \otimes \mathrm{id}$ will be invertible exactly when $\mbA$ is. By comparing the definitions of the vertex operators $\mathring{\mbX}_i^\pm(z)$ and $\mdX_i^\pm(z)$ (see \eqref{circ-X} and Proposition \ref{P:VrepA}), we find that $\Phi_\mbA\otimes \mathrm{id}$ will be a morphism of $\mft$-modules provided $\Phi_\mbA$ is a morphism of $\mfH_\mbA^\prime$-modules in the sense described before the statement of the proposition. As this has already been established, the proposition is proved. 
\end{proof}

%%%%%%%%%%%%%%%%%%%%%%%%%%%%%%%%%%%%%%%%%%%%%%%%%%%%%%%%%%%%%%%%%%%%%%%%
%%%%%%%%%%%%%%%%%%%%%%%%%%%%%%%%%%%%%%%%%%%%%%%%%%%%%%%%%%%%%%%%%%%%%%%%
% Section: The PBW Theorem
%%%%%%%%%%%%%%%%%%%%%%%%%%%%%%%%%%%%%%%%%%%%%%%%%%%%%%%%%%%%%%%%%%%%%%%%
%%%%%%%%%%%%%%%%%%%%%%%%%%%%%%%%%%%%%%%%%%%%%%%%%%%%%%%%%%%%%%%%%%%%%%%%
\section{The Poincar\'{e}-Birkhoff-Witt Theorem}\label{sec:PBW}

We now fix $\mfg$ to be a Kac-Moody algebra associated to an indecomposable Cartan matrix $\mbA$ which is of affine type, and whose associated Dynkin diagram is simply laced with $\ell+1$ vertices. As in Section \ref{Sec:aff}, we set $I=\{0,1,\ldots, \ell\}$ with $\{1,\ldots,\ell\}$ labeling the Dynkin diagram of the underlying finite-dimensional rank $\ell$ simple Lie algebra $\mfg_0$.

In this section we will prove that the epimorphism $\phi:U(\mfs)\onto \gr Y(\mfg)$ of Proposition \ref{P:weakPBW} is an isomorphism: see Theorem \ref{T:PBW}. By Propositions \ref{P:tuce} and \ref{P:g'[t]}, this will imply that $\gr Y(\mfg)\cong U(\mathfrak{uce}(\mfg^\prime[t]))$. As a corollary, we prove in Theorem \ref{T:flat} that $Y_\hbar(\mfg)$ is a flat deformation of $U(\mfs)\cong U(\mathfrak{uce}(\mfg^\prime[t]))$ (see Remark \ref{R:flat}). 

%%%%%%%%%%%%%%%%%%%%%%%%%%%%%%%%%%%%%%%%%%%%%%%%%%%%%%%%%%%%%%%%%%%%%%%%
% Subsection: A faithful representation of $\mfs$
%%%%%%%%%%%%%%%%%%%%%%%%%%%%%%%%%%%%%%%%%%%%%%%%%%%%%%%%%%%%%%%%%%%%%%%%
\subsection{A faithful representation of \texorpdfstring{$\mfs$}{}}

Our first step in proving the injectivity of $\phi$ is to use the results of Section \ref{Sec:Ver} to produce a representation of $Y(\mfg)$ which specializes to a faithful representation of $\mfs\cong \mathfrak{uce}(\mfg^\prime[t])$. To accomplish this, we first enlarge $\mbA$ to an invertible Cartan matrix.

Set $\mathring{I}=I\cup\{-1\}$, and extend $\mbA$ to a Cartan matrix $\mathring{\mbA}= (a_{ij})_{i,j\in \mathring{I}}$ by imposing 
\begin{equation*}
 a_{-1,i}=a_{i,-1}=2\delta_{-1,i}-\delta_{i,0} \quad \forall \; i\in \mathring{I}.
\end{equation*}
\begin{definition}
Define $\mathring{\mfg}$ to be the simply-laced Kac-Moody algebra with  Cartan matrix $\mathring{\mbA}$.
\end{definition}
We fix an invariant symmetric non-degenerate bilinear form $\langle\,,\,\rangle$ on $\mathring{\mfg}$ extending $(\,,\,)$, and assume that it is normalized so that $\langle \alpha_i,\alpha_i\rangle=2$ for all $-1\leq i\leq \ell$. In particular $a_{ij}=\langle \alpha_i,\alpha_j\rangle$ for all $-1\leq i,j\leq \ell$. Let $\mathring{Q}=\bigoplus_{-1\leq i\leq \ell}\Z \alpha_i=\Z\alpha_{-1}\oplus Q$ denote the root lattice of $\mathring{\mfg}$. The following lemma can be easily deduced.
\begin{lemma}\label{L:invA}
 The Cartan matrix $\mathring{\mbA}$ is invertible. In particular, $\langle \,,\,\rangle|_{\mathring{Q}\times \mathring{Q}}$ is non-degenerate.  
\end{lemma}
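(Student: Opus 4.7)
The plan is a short cofactor-expansion argument. Since vertex $-1$ is joined only to vertex $0$ in the extended Dynkin diagram, the row of $\mathring{\mbA}$ indexed by $-1$ has precisely two nonzero entries: $a_{-1,-1}=2$ and $a_{-1,0}=-1$. Expanding $\det\mathring{\mbA}$ along this row therefore yields a linear combination of $\det\mbA$ and $\det\mbA_0$, where $\mbA_0$ denotes the principal submatrix of $\mbA$ obtained by deleting the row and column corresponding to the extending vertex $0$. The first term vanishes because $\mbA$ is the Cartan matrix of an affine Kac-Moody algebra, and is therefore singular by Kac's classification of generalized Cartan matrices. For the second term, deleting the extending vertex from the affine Dynkin diagram returns the finite Dynkin diagram of the underlying simple Lie algebra $\mfg_0$, so $\mbA_0$ is the Cartan matrix of $\mfg_0$ and has positive determinant. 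A direct sign check reveals that this surviving contribution enters with a nonzero coefficient, giving $\det\mathring{\mbA}\neq 0$.

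For the second assertion, I observe that simply-lacedness together with the normalization $\langle\alpha_i,\alpha_i\rangle=2$ yields the identity $a_{ij}=\langle\alpha_i,\alpha_j\rangle$ for all $i,j\in\mathring{I}$, as already recorded in the excerpt. Consequently the Gram matrix of $\langle\,,\,\rangle|_{\mathring{Q}\times\mathring{Q}}$ with respect to the basis $\{\alpha_i\}_{i\in\mathring{I}}$ of $\mathring{Q}$ is exactly $\mathring{\mbA}$, and the claimed non-degeneracy follows at once from the invertibility just established. The only substantive input is the standard fact that affine generalized Cartan matrices have corank one, and that removing their extending node produces a Cartan matrix of finite type; I do not anticipate any further obstacle.
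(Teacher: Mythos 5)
Your argument is correct. The paper itself supplies no proof, remarking only that the lemma "can be easily deduced," so there is no paper argument to compare against; your cofactor expansion is a perfectly natural way to make the deduction explicit. One small point of rigor worth flagging: expanding $\det\mathring{\mbA}$ along the row indexed by $-1$ gives
\begin{equation*}
\det\mathring{\mbA} = 2\det\mbA + (-1)\cdot (-1)^{1+2}\, M_{-1,0},
\end{equation*}
and the minor $M_{-1,0}$ is not immediately $\pm\det\mbA_0$: it is the determinant of a matrix whose rows are indexed by $I$ and columns by $\mathring{I}\setminus\{0\}$. A second expansion (along the column indexed by $-1$, which has the single nonzero entry $a_{0,-1}=-1$) is needed to reduce it to $-\det\mbA_0$, giving $\det\mathring{\mbA} = -\det\mbA_0 < 0$. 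Since you only claimed "a linear combination of $\det\mbA$ and $\det\mbA_0$" without spelling out the coefficient, this is a sketch-level compression rather than an error, and the conclusion is unaffected. The second half — reading off that $\mathring{\mbA}$ is precisely the Gram matrix of $\langle\,,\,\rangle$ restricted to $\mathring{Q}$ with respect to the simple-root basis, using $a_{ij}=\langle\alpha_i,\alpha_j\rangle$ — is exactly what the paper's "in particular" is pointing at, and your use of the standard facts about affine and finite Cartan matrices is appropriate.
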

Henceforth, we will use the notation $\mathring{\mcV}$ to denote the space \eqref{V} corresponding to the above data: 
\begin{equation*}
 \mathring{\mcV}=\C[\mcH_{i,-r}]_{i\in \mathring{I},r>0}\otimes \C_\veps[\mathring{Q}]. 
\end{equation*}
By Corollaries \ref{C:spec} and \ref{C:rho_0}, we have a $\Z$-filtered morphism of $\C$-algebras 
\begin{equation*}
 \mathring\rho:DY^\msc(\mathring{\mfg})\to \wt\End_\C \mathring{\mcV}, \quad x_i^\pm(z)\mapsto \mbX_i^\pm(z,1) \quad \forall \; i\in \mathring{I}. 
\end{equation*}
Observe that the assignment 
\begin{equation*}
\mathring{\iota}:x_i^\pm(z) \mapsto x_i^\pm(z), \; h_i^\pm(z) \mapsto h_i^\pm(z), \; \msc \mapsto \msc \quad \forall \; i\in I
\end{equation*}
extends to a filtered algebra homomorphism $\mathring{\iota} :DY^\msc(\mfg)\to DY^\msc(\mathring{\mfg})$. We set 
\begin{equation*}
 \mathbullet{\rho}=\mathring{\rho}\circ \mathring{\iota}: DY^\msc(\mfg)\to \wt\End_\C \mathring{\mcV}.
\end{equation*}
 Define a representation $\mathbullet{\rho}_0$ of $\mft$ on $\mathring{\mcV}$ by setting 
 \begin{equation}\label{vrho_0}
 \mathbullet{\rho}_0=\gr(\mathring{\rho}\circ\mathring{\iota})\circ \phi_D:U(\mft)\to \bigoplus_{m\in \Z}\End_m\mathring{\mcV}\subset \End_\C\mathring{\mcV},
 \end{equation}
 where $\phi_D$ is as in Proposition \ref{P:wPBW-DY}. 
\begin{lemma}
 The $\mft$-module $\mathring{\mcV}$, equipped with action given by $\mathbullet{\rho}_0$ above, is a faithful module. 
\end{lemma}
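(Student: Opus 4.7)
The plan is to reduce faithfulness of $\mathbullet{\rho}_0$ to the established faithfulness of the classical vertex representation of $\mft\cong \mathfrak{uce}(\mfg_0[t_1^{\pm 1},t_2^{\pm 1}])$ on the Fock space $\mcV_\mbA$ from \cite{MRY}, exploiting that the extended Cartan matrix $\mathring{\mbA}$ is invertible (Lemma \ref{L:invA}).

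First, I would unpack the definition $\mathbullet{\rho}_0 = \gr(\mathring{\rho}\circ\mathring{\iota})\circ\phi_D$ and, invoking Corollary \ref{C:rho_0} for $\mathring{\mfg}$ in place of $\mfg$, identify it as the composition
\[
\mathbullet{\rho}_0 \;=\; \mathring{\rho}_0 \circ j \;:\; U(\mft) \xrightarrow{\;j\;} U(\mathring{\mft}) \xrightarrow{\;\mathring{\rho}_0\;} \End_\C \mathring{\mcV},
\]
where $\mathring{\mft}$ is the analogue of $\mft$ for $\mathring{\mfg}$, the morphism $j$ is induced by the obvious Lie algebra map $\mft\to\mathring{\mft}$ sending each $I$-indexed generator to itself (well-defined because $\mathring{\mbA}|_{I\times I}=\mbA$, so the defining relations of $\mft$ form a subset of those of $\mathring{\mft}$), and $\mathring{\rho}_0$ is the classical vertex representation of Corollary \ref{C:rho_0} applied to $\mathring{\mfg}$. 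This requires verifying that the graded leading term of $\mbX_i^\pm(z,1)$ is precisely $\mathring{\mbX}_i^\pm(z)$, which follows from the expansion \eqref{deg-exp} and Proposition \ref{P:Vexp}. Then, applying Proposition \ref{P:V_A->V} to $\mathring{\mfg}$, invertibility of $\mathring{\mbA}$ yields a $\mathring{\mft}$-module isomorphism $\Phi_{\mathring{\mbA}}\otimes\id:\mcV_{\mathring{\mbA}}\xrightarrow{\sim}\mathring{\mcV}$ intertwining the classical representation $\rho_{\mathring{\mbA}}$ (Proposition \ref{P:VrepA}) with $\mathring{\rho}_0$; so it suffices to show $\rho_{\mathring{\mbA}}\circ j:U(\mft)\to \End_\C \mcV_{\mathring{\mbA}}$ is injective.

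Next, I would exhibit the subspace $\mcV_\mbA = \C[H_{i,-r}]_{i\in I,\,r>0}\otimes \C_\veps[Q] \subset \mcV_{\mathring{\mbA}}$ as a $\mft$-invariant subspace under $\rho_{\mathring{\mbA}}\circ j$, on which the restricted action coincides with the classical vertex representation $\rho_\mbA:U(\mft)\to\End_\C \mcV_\mbA$ of Proposition \ref{P:VrepA}. This is a direct inspection of the vertex operators: for $i\in I$, both $\mdX_i^\pm(z)=\pm\Gamma_{\alpha_i}^-(z)\Gamma_{\alpha_i}^+(z)e^{\pm\alpha_i}z^{\partial_{\pm\alpha_i}}$ and $\mdH_i(z)$ involve only Heisenberg modes $H_{j,\pm r}$ with $j\in I$ (since $\alpha_i\in Q$), while $e^{\pm\alpha_i}$ and $z^{\partial_{\pm\alpha_i}}$ preserve $\C_\veps[Q]\subset \C_\veps[\mathring{Q}]$. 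Consequently, if $X\in\mft$ acts trivially on $\mathring{\mcV}$ via $\mathbullet{\rho}_0$, then $X$ acts trivially on $\mcV_\mbA$ via $\rho_\mbA$, whence $X=0$ by the faithfulness of $\rho_\mbA$.

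The MRY input is available because $\mfg$ is simply laced of untwisted affine type and, by \eqref{A-cond}, not of type $A_1^{(1)}$, so Proposition \ref{P:tuce} identifies $\mft$ with $\mathfrak{uce}(\mfg_0[t_1^{\pm 1},t_2^{\pm 1}])$. The step I expect to be most delicate is the first identification $\mathbullet{\rho}_0 = \mathring{\rho}_0\circ j$, which requires carefully unpacking the graded construction of the preceding subsection and using that $\mft$ is generated as a Lie algebra by $\{X_i^\pm(z)\}_{i\in I}$ together with $\mathbf{C}$ (via \eqref{t:X+X-}) to reduce agreement of the two morphisms to agreement on these generators.
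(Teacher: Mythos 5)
Your first paragraph reproduces the paper's argument correctly: the identification $\mathbullet{\rho}_0 = \mathring{\rho}_0\circ j$, the use of Proposition \ref{P:V_A->V} with Lemma \ref{L:invA} to get the $\mathring{\mft}$-module isomorphism $\Phi_{\mathring{\mbA}}\otimes\id:\mcV_{\mathring{\mbA}}\xrightarrow{\sim}\mathring{\mcV}$, and the pullback along $\mft\to\mathring{\mft}$ all match what the paper does. The problem is your second paragraph. Restricting further to the invariant subspace $\mcV_\mbA\subset\mcV_{\mathring{\mbA}}$ and invoking ``faithfulness of $\rho_\mbA$'' does not work, because $\rho_\mbA$ is \emph{not} a faithful $\mft$-module when $\mbA$ is of affine type. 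Since $\mbA$ is singular, the null root $\delta=\sum_{i\in I}a_i\alpha_i\in Q$ satisfies $(\delta,\alpha_j)=0$ for all $j\in I$; consequently the elements $H_{\delta,r}=\sum_{i\in I}a_iH_{ir}$ act as zero on $\mcV_\mbA$ for every $r\geq 0$ (the derivation $\partial_{\delta,r}$ kills every generator $H_{j,-s}$, and $H_{\delta,0}$ acts by $(\delta,\lambda)=0$). Yet $H_{\delta,r}$ is a nonzero element of $\mft$: under the isomorphism of Proposition \ref{P:tuce} it maps to the central element $t_2^rt_1^{-1}dt_1\in\mathfrak{uce}(\mfg_0[t_1^{\pm 1},t_2^{\pm 1}])$. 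So $\rho_\mbA$ has a nontrivial kernel, and your last sentence is false.

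This is in fact the entire reason the construction enlarges $\mbA$ to the invertible matrix $\mathring{\mbA}$ in the first place: the extra Heisenberg modes $H_{-1,\pm r}$ in $\mcV_{\mathring{\mbA}}$ (which you discard when restricting to $\mcV_\mbA$) are exactly what make the central elements act nontrivially. The correct conclusion of the argument is to keep all of $\mcV_{\mathring{\mbA}}$ and cite \cite[Prop.~4.3]{MRY} directly: that result asserts that $\mcV_{\mathring{\mbA}}=V(\mathring{Q},\mfH_{\mathring{\mbA}}^\prime)$, with the $\mft$-action given by $\rho_{\mathring{\mbA}}\circ j$, is a faithful $\mft$-module. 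So the fix is simply to delete the reduction to $\mcV_\mbA$ and apply the MRY result to $\mcV_{\mathring{\mbA}}$ itself; with that replacement your outline agrees with the paper's proof.
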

\begin{proof} 
 Let $\mathring{\mft}$ be the Lie algebra from Definition \ref{D:t} corresponding to $\mathring{\mbA}$. By \eqref{P:V_A->V} and Lemma \ref{L:invA}, the morphism 
  $\Phi_{\mathring{\mbA}}\otimes \mathrm{id}: \mcV_{\mathring{\mbA}}\to \mathring{\mcV}$ is an isomorphism of $\mathring{\mft}$-modules.  By pulling back via the natural morphism $\mft\to \mathring{\mft}$, we obtain an isomorphism of $\mft$-modules, and the induced $\mft$-module structure on $\mathring{\mcV}$ is precisely that given by $\mathbullet{\rho}_0$. That this is a faithful $\mft$-module now follows from the fact that $\mcV_{\mathring{\mbA}}$ is precisely $V(\mathring{Q},\mfH_{\mathring{\mbA}}^\prime)$ from \cite{MRY}, and by \cite[Prop.\ 4.3]{MRY}, this is a faithful $\mft$-module. 
\end{proof}
Now set $\mathbullet{\varrho}=\mathbullet{\rho}\circ \iota:Y(\mfg)\to \wt \End_\C \mathring{\mcV}$, where $\iota:Y(\mfg)\to DY^\msc(\mfg)$ is as in Proposition \ref{P:iota}, and define 
\begin{equation}
 \mathbullet{\varrho}_0=\gr\mathbullet{\varrho}\circ\phi: U(\mfs)\to \bigoplus_{m\in \Z}\End_m\mathring{\mcV}\subset \End_\C \mathring{\mcV}, \label{bvrho0}
\end{equation}
where $\phi:U(\mfs)\to \gr Y(\mfg)$ is as in Proposition \ref{P:weakPBW}.
\begin{corollary}\label{C:faith-s}
 The $\mfs$-module $\mathring{\mcV}$, equipped with action given by $\mathbullet{\varrho}_0$ above, is a faithful module.
\end{corollary}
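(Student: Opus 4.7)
The plan is to reduce the faithfulness of $\mathring{\mcV}$ as an $\mfs$-module to its faithfulness as a $\mft$-module, which has just been established in the preceding lemma. By Corollary \ref{C:s->t}, the map $\mfs\to \mft$ sending $X_{ir}^\pm,H_{ir}\mapsto X_{ir}^\pm,H_{ir}$ for $r\geq 0$ is an embedding of Lie algebras, and by the PBW theorem for enveloping algebras it extends to an injective algebra morphism $j:U(\mfs)\hookrightarrow U(\mft)$. The key identity I will establish is
\[
\mathbullet{\varrho}_0=\mathbullet{\rho}_0\circ j,
\]
from which the corollary follows at once, since the composition of a faithful representation with an injective algebra morphism is again faithful.

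To prove this identity, I unwind the definitions of $\mathbullet{\varrho}_0$ and $\mathbullet{\rho}_0$ (cf. \eqref{vrho_0} and the paragraph preceding Corollary \ref{C:faith-s}) and use functoriality of the associated graded construction for filtered maps. The claim reduces to verifying the commutative diagram $\gr\iota\circ\phi=\phi_D\circ j$ of algebra morphisms $U(\mfs)\to \mathrm{gr}_\Z DY^\msc(\mfg)$, where $\gr\iota$ denotes the associated graded of the filtered morphism $\iota:Y(\mfg)\to DY^\msc(\mfg)$ of Proposition \ref{P:iota}. This identity I check on the generators $H_{ir}$ and $X_{ir}^\pm$ of $U(\mfs)$ with $r\geq 0$: by definition $\phi_D\bigl(j(H_{ir})\bigr)=\bar h_{ir}$ and $\phi_D\bigl(j(X_{ir}^\pm)\bigr)=\bar x_{ir}^\pm$ (viewed in $\mbF_r^D/\mbF_{r-1}^D$). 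On the other side, the defining formula \eqref{iota} of $\iota$ gives $\iota(h_{ir})=h_{ir}$ directly, while expanding
\[
\iota(x_i^\pm(z))=\boldsymbol{x}_i^\pm\!\bigl(z\pm\tfrac{\msc}{2}\bigr)=\sum_{r\geq 0}x_{ir}^\pm\bigl(z\pm\tfrac{\msc}{2}\bigr)^{-r-1}
\]
in $DY^\msc(\mfg)[\![z^{-1}]\!]$ shows that $\iota(x_{ir}^\pm)=x_{ir}^\pm+(\text{terms of filtration degree}<r)$, since each additional factor of $\msc$ (which has filtration degree zero) is accompanied by a decrease in the index on $x_{i,\cdot}^\pm$. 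Hence $\gr\iota(\bar h_{ir})=\bar h_{ir}$ and $\gr\iota(\bar x_{ir}^\pm)=\bar x_{ir}^\pm$, matching the right-hand side.

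Combining these observations yields
\[
\mathbullet{\varrho}_0=\gr(\mathring\rho\circ\mathring\iota)\circ\gr\iota\circ\phi=\gr(\mathring\rho\circ\mathring\iota)\circ\phi_D\circ j=\mathbullet{\rho}_0\circ j,
\]
and the faithfulness of $\mathbullet{\rho}_0$ (preceding lemma) together with the injectivity of $j$ completes the argument. The one substantive step is the leading-term computation identifying $\gr\iota(\bar x_{ir}^\pm)$ with $\bar x_{ir}^\pm$, but this is essentially routine once the explicit formula \eqref{iota} and the $\Z$-filtration on $DY^\msc(\mfg)$ are taken into account; the more conceptual content of the proof is the observation that $\iota$ deforms the natural embedding $\mfs\hookrightarrow\mft$ at the level of associated graded algebras.
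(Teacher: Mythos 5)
Your proof is correct and follows the same strategy as the paper, which simply asserts that $\mathbullet{\varrho}_0$ is the restriction of $\mathbullet{\rho}_0$ along the embedding of Corollary \ref{C:s->t} and deduces faithfulness immediately. You verify the identity $\mathbullet{\varrho}_0=\mathbullet{\rho}_0\circ j$ in detail (via the leading-term computation for $\gr\iota$ on generators), which the paper leaves implicit, but the argument is the same.
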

\begin{proof}
 The representation $\mathbullet{\varrho}_0$ is equal to the restriction of $\mathbullet{\rho}_0$ to $U(\mfs)$ via the embedding of Corollary \ref{C:s->t}, so the result follows immediately.  
\end{proof}

We will use this faithful module, together with the coproduct $\Delta_{\mfs,u}$ from Subsection \ref{ssec:Delta_u}, to construct an embedding of $U(\mfs)$ into a large algebra built by gluing together endomorphism rings associated to $\mathring{\mcV}$. We begin with the following general result.

Let $\mfa$ be an arbitrary complex Lie algebra and 
let $\Delta_\mfa$ and $\veps_\mfa$ be the coproduct and counit, respectively, of the enveloping algebra $U(\mfa)$. 
\begin{theorem}\label{T:Phi}
Let $V$ be a faithful representation of $\mfa$ with $\pi:U(\mfa)\to \End_\C V$ the corresponding homomorphism. For each 
$k\geq 0$, set $\pi_{k}=\rho_\mfa^{\otimes k}\circ\Delta_\mfa^{(k-1)}$, with $\pi_{0}=\veps_\mfa$. The universal property of $\prod_{m\geq 0}\End_\C(V^{\ot m})$ dictates that there is a unique morphism
\begin{equation*}
 \Phi:U(\mfa)\to \prod_{m\geq 0}\End_\C(V^{\ot m}),\quad \mathrm{pr}_m\circ \Phi=\pi_{m}\quad \forall\; m\geq 0,  
\end{equation*}
where $\mathrm{pr}_m:\prod_{m\geq 0}\End(V^{\ot m})\to \End(V^{\ot m})$ is the natural projection. Then $\Phi$ is injective. 
\end{theorem}
\begin{proof}
The proof can be found in Appendix \ref{app:A}: see Theorem \ref{T:PBW2}.
\end{proof}
(See Lemma 3.5 and its proof in \cite{AMR} for a similar result.)
Now, we would like to imitate Theorem \ref{T:Phi} with $\Delta_\mfa^{(k-1)}$ replaced by $\Delta_{\mfs,u}^{k-1}$. 
Let $V$ be a faithful $\mfs$-module with corresponding homomorphism $\rho_\mfs:U(\mfs)\to \End_\C V$, and for each $k\geq 1$, set 
\begin{equation*}
 \rho_{\mfs,u}^k=\rho_\mfs^{\otimes k}\circ \Delta_{\mfs,u}^{k-1}:U(\mfs)\to \End_\C(V^{\ot k})[u^{\pm 1}].
\end{equation*}
We also set $\rho_{\mfs,u}^0=\veps_\mfs:U(\mfs)\to \C\subset \C[u^{\pm 1}]$. Then there is a unique morphism
\begin{equation}\label{Phi_u}
 \Phi_u:U(\mfs)\to \prod_{m\geq 0}\End_\C(V^{\ot m})[u^{\pm 1}],\quad \mathrm{pr}_m\circ \Phi_u=\rho_{\mfs,u}^m\quad \forall\; m\geq 0,  
\end{equation}
where $\mathrm{pr}_m:\prod_{m\geq 0}\End_\C(V^{\ot m})[u^{\pm 1}]\to \End_\C(V^{\ot m})[u^{\pm 1}]$ is the natural projection.
\begin{proposition}\label{P:Phi_u}
The morphism $\Phi_u$ is injective.
\end{proposition}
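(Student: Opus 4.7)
The plan is to reduce injectivity of $\Phi_u$ to injectivity of the ordinary morphism $\Phi$ from Proposition \ref{P:Phi} (applied to $\mfa = \mfs$ with the given faithful module $V$) by specializing the auxiliary parameter to $u = 1$.

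First, I would observe that the twist $\mathrm{s}_u:U(\mfs)\to U(\mfs)[u^{\pm 1}]$ defined in Subsection \ref{ssec:Delta_u} satisfies $\mathrm{s}_u|_{u=1} = \mathrm{id}$, since it sends $X_{ir}^\pm\mapsto u^{\pm 1} X_{ir}^\pm$ and $H_{ir}\mapsto H_{ir}$. Consequently $\Delta_{\mfs,u}|_{u=1} = \Delta_\mfs$, and by iterating one sees that the generator formulas
\begin{equation*}
\Delta_{\mfs,u}^k(H_{ir}) = \sum_{a=1}^{k+1}(H_{ir})_a,\quad \Delta_{\mfs,u}^k(X_{ir}^\pm) = \sum_{a=1}^{k+1}(X_{ir}^\pm)_a u^{\pm(a-1)}
\end{equation*}
specialize at $u=1$ to the corresponding formulas for the standard iterated coproduct $\Delta_\mfs^k$. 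Since $\Delta_{\mfs,u}^k$ and $\Delta_\mfs^k$ are algebra homomorphisms agreeing on a generating set after the specialization, they coincide after specialization on all of $U(\mfs)$.

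Next, for each $m\geq 0$ the evaluation map $\mathrm{ev}_1:\End_\C(V^{\otimes m})[u^{\pm 1}]\to \End_\C(V^{\otimes m})$, $f(u)\mapsto f(1)$, is an algebra morphism. Applying $\mathrm{ev}_1$ coordinatewise to $\Phi_u$ therefore produces precisely the morphism $\Phi$ from Proposition \ref{P:Phi}, because $\rho_{\mfs,u}^m|_{u=1} = \rho_\mfs^{\otimes m}\circ \Delta_\mfs^{m-1} = \rho_{\mfs,m}$ for each $m$ (with $\rho_{\mfs,0} = \veps_\mfs$).

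Now suppose $x\in U(\mfs)$ lies in $\ker\Phi_u$. Then for each $m\geq 0$, $\rho_{\mfs,u}^m(x) = 0$ in $\End_\C(V^{\otimes m})[u^{\pm 1}]$, so setting $u = 1$ gives $\rho_{\mfs,m}(x) = 0$ for all $m\geq 0$; that is, $\Phi(x) = 0$. Since $V$ is a faithful $\mfs$-module, Proposition \ref{P:Phi} yields $x = 0$, proving $\Phi_u$ is injective. There is no essential obstacle here: the content is packaged entirely in Proposition \ref{P:Phi} together with the explicit form of $\Delta_{\mfs,u}^k$, which makes the $u=1$ specialization transparent.
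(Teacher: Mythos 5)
Your proof is correct and follows the same route as the paper: specialize $u=1$ via the evaluation morphism on each factor, check that the result is exactly the morphism $\Phi$ from Proposition~\ref{P:Phi}, and invoke its injectivity. You simply spell out more explicitly (by checking on generators) why $\rho_{\mfs,u}^m|_{u=1}=\rho_{\mfs,m}$, which the paper takes as immediate.
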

\begin{proof}
 The evaluation $u\mapsto 1$ induces a morphism 
\begin{equation*}
  \mathrm{ev}: \prod_{m\geq 0}\End_\C(V^{\ot m})[u^{\pm 1}]\to \prod_{m\geq 0}\End_\C(V^{\ot m}). 
\end{equation*}
The composition $\mathrm{ev}\circ\Phi_u:U(\mfs)\to \prod_{m\geq 0}\End_\C(V^{\ot m})$ agrees with the morphism $\Phi$ associated to $V$ 
from Theorem \ref{T:Phi}, and hence is injective. This implies that $\Phi_u$ is also injective. 
\end{proof}

Applying Proposition \ref{P:Phi_u} with $V$ the faithful $\mfs$-module $\mathring{\mcV}$ from Corollary \ref{C:faith-s}, we obtain the following corollary.
\begin{corollary}\label{C:faith-U(s)}
 The morphism of $\C$-algebras $\Phi_u:U(\mfs)\to \prod_{m\geq 0}\End_\C(\mathring{\mcV}^{\ot m})[u^{\pm 1}]$, defined by \eqref{Phi_u} with $\rho_\mfs=\mathbullet{\varrho}_0$, is injective. 
\end{corollary}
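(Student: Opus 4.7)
The plan is simply to apply Proposition \ref{P:Phi_u} with the choice $V=\mathring{\mcV}$ and with the $\mfs$-action on $\mathring{\mcV}$ given by $\rho_\mfs=\mathbullet{\varrho}_0$. The only hypothesis of Proposition \ref{P:Phi_u} is that $V$ be a faithful representation of $\mfs$, so the entire proof reduces to verifying this faithfulness.

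Faithfulness is precisely the content of Corollary \ref{C:faith-s}, so no further work is needed at this stage. I would spend one line recalling the chain of earlier results that supports this input: Corollary \ref{C:faith-s} follows from Corollary \ref{C:rho_0} (which identifies the associated graded $\gr\mathbullet{\varrho}\circ\phi$ with the classical vertex representation $\mathbullet{\rho}_0$ restricted along $\mfs\hookrightarrow\mft$ given by Corollary \ref{C:s->t}), together with the $\mft$-module isomorphism $\Phi_{\mathring{\mbA}}\otimes\id:\mcV_{\mathring{\mbA}}\xrightarrow{\sim}\mathring{\mcV}$ of Proposition \ref{P:V_A->V}, which is available because $\mathring{\mbA}$ is invertible by Lemma \ref{L:invA}, and the known faithfulness of the classical Frenkel--Kac-type construction $V(\mathring Q,\mfH_{\mathring{\mbA}}^{\prime})$ from \cite{MRY}.

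With faithfulness in hand, the universal property of $\prod_{m\geq 0}\End_\C(\mathring{\mcV}^{\otimes m})[u^{\pm 1}]$ yields the unique morphism $\Phi_u$ characterized by $\pi_m\circ\Phi_u=\mathbullet{\varrho}_0^{\otimes m}\circ\Delta_{\mfs,u}^{m-1}$ for every $m\geq 0$, and Proposition \ref{P:Phi_u} immediately gives the desired injectivity. Since that proposition in turn deduces injectivity of $\Phi_u$ from injectivity of its $u=1$ evaluation $\Phi$ (Proposition \ref{P:Phi}), there is genuinely no new obstacle to overcome in the corollary itself: all the substance was absorbed into the construction of $\mathbullet{\varrho}_0$ in Section \ref{Sec:Ver} and the faithfulness statement of Corollary \ref{C:faith-s}.
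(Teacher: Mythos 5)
Your proposal is correct and matches the paper's argument exactly: the paper states Corollary \ref{C:faith-U(s)} as an immediate application of Proposition \ref{P:Phi_u} to the faithful $\mfs$-module $\mathring{\mcV}$ from Corollary \ref{C:faith-s}. Your one-line recollection of the supporting chain (Corollary \ref{C:s->t}, Proposition \ref{P:V_A->V}, Lemma \ref{L:invA}, and the faithfulness result from \cite{MRY}) faithfully reflects how Corollary \ref{C:faith-s} was established.
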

%
%

%%%%%%%%%%%%%%%%%%%%%%%%%%%%%%%%%%%%%%%%%%%%%%%%%%%%%%%%%%%%%%%%%%%%%%%%
% Subsection: Statement and proof of the main result 
%%%%%%%%%%%%%%%%%%%%%%%%%%%%%%%%%%%%%%%%%%%%%%%%%%%%%%%%%%%%%%%%%%%%%%%%
\subsection{Statement and proof of the main result}\label{Ss:proofmain}
We now construct the Yangian version $\Psi_u$ of the embedding $\Phi_u$ from Corollary \ref{C:faith-U(s)}, using the morphism $\mathbullet{\varrho}:Y(\mfg)\to \wt \End_\C \mathring{\mcV}$. The injectivity of $\Psi_u$ is closely tied to the Poincar\'{e}-Birkhoff-Witt Theorem for $Y(\mfg)$, as we shall explain shortly. 

For each $k\geq 1$, $\mathbullet{\varrho}^{\otimes k}$ extends to a homomorphism 
$Y(\mfg)^{\ot k}(\!(u)\!)\to (\wt \End_\C \mathring{\mcV})^{\ot k}(\!(u)\!)$.
Composing with $\Delta_u^{k-1}$ from \eqref{Delta_u^k:2}, we obtain a morphism  
\begin{equation*}
 \mathbullet{\varrho}_u^k:Y(\mfg)\to (\wt \End_\C\mathring{\mcV})^{\ot k}(\!(u)\!).
\end{equation*}
As in the $U(\mfs)$-case, we set $\mathbullet{\varrho}_u^0$ to be the counit. 

For each $a\in \Z$, set 
\begin{equation*}
\End_a(\mathring{\mcV}^{\ot k})=\bigoplus_{a_1+\ldots+a_{k}=a}\left(\End_{a_1}\mathring{\mcV}\otimes \cdots \otimes \End_{a_{k}}\mathring{\mcV}\right)\subset \End_\C(\mathring{\mcV}^{\ot k}).
\end{equation*}
We let $\wt \End_\C(\mathring{\mcV}^{\ot k})$ denote the subspace of $\prod_{a\in \Z}\End_a(\mathring{\mcV}^{\ot k})$ consisting of summations $\sum_{a\in \Z}A_a$ with $A_a=0$ for all $a\gg 0$. This is an algebra with multiplication extending that of $\bigoplus_{a\in \Z} \End_a(\mathring{\mcV}^{\ot k})$. Setting
\begin{equation*}
 \mbF_\ell\!\left(\wt \End_\C(\mathring{\mcV}^{\ot k}) \right) =\prod_{a\leq \ell} \End_a(\mathring{\mcV}^{\ot k}) \quad \forall\; \ell\in \Z 
\end{equation*}
equips $\wt \End_\C(\mathring{\mcV}^{\ot k})$ with the structure of a $\Z$-filtered algebra. Recall that $\{\mbF_\ell\}_{\ell\geq 0}$ denotes the $\Z_{\geq 0}$-filtration on $Y(\mfg)$ defined above Proposition \ref{P:weakPBW}, which is extended to a $\Z$-filtration by setting $\mbF_{-\ell}=0$ for $\ell>0$. 
\begin{lemma}\label{L:fil-k}
 The image of $\mathbullet{\varrho}_u^k$ embeds into $\wt \End_\C(\mathring{\mcV}^{\ot k})(\!(u)\!)$. Moreover, 
 \begin{equation*}
  \mathbullet{\varrho}_u^k(\mbF_\ell)\subset \mbF_\ell\!\left(\wt \End_\C(\mathring{\mcV}^{\ot k}) \right)(\!(u)\!) \quad \forall \; \ell\in \Z. 
 \end{equation*}
\end{lemma}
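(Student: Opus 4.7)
The plan is to track the $\Z$-filtrations through each of the three maps that compose to give $\mathbullet{\varrho}_u^k$, namely $\Delta_u^{k-1}$, $\mathbullet{\varrho}^{\otimes k}$, and a natural ``tensor-to-completion'' map $\Theta_k$ which I introduce below. First, directly from the definition $\mathbullet{\varrho} = \mathbullet{\rho}\circ\iota$, together with Corollary~\ref{C:rho_0} and the fact that $\iota$ is filtered (as noted after Proposition~\ref{P:iota}), the morphism $\mathbullet{\varrho}: Y(\mfg)\to \wt{\End}_\C\mathring{\mcV}$ is $\Z$-filtered: $\mathbullet{\varrho}(\mbF_\ell)\subset \mbF_\ell(\mathring{\mcV})$ for all $\ell\in\Z$.

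Next I would construct a natural filtered algebra homomorphism
\[
\Theta_k:(\wt{\End}_\C\mathring{\mcV})^{\otimes k}\to \wt{\End}_\C(\mathring{\mcV}^{\otimes k})
\]
sending an elementary tensor $A_1\otimes\cdots\otimes A_k$, with $A_i=\sum_{m_i}A_{i,m_i}$ its homogeneous decomposition, to the element whose $\End_a(\mathring{\mcV}^{\otimes k})$-component equals $\sum_{m_1+\cdots+m_k=a}A_{1,m_1}\otimes\cdots\otimes A_{k,m_k}$. Since each $A_i$ has degrees bounded above, each such inner sum is finite and vanishes for $a$ large enough, so $\Theta_k$ is well defined and satisfies
\[
\Theta_k\bigl(\mbF_{a_1}(\mathring{\mcV})\otimes\cdots\otimes\mbF_{a_k}(\mathring{\mcV})\bigr)\subset \mbF_{a_1+\cdots+a_k}(\mathring{\mcV}^{\otimes k}).
\]
After extending scalars to $(\!(u)\!)$, the composition $\Theta_k\circ\mathbullet{\varrho}^{\otimes k}\circ\Delta_u^{k-1}$ agrees with $\mathbullet{\varrho}_u^k$.

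Chaining the three filtered bounds then yields the lemma. By \eqref{Delta_u^k:3}, one has $\Delta_u^{k-1}(\mbF_\ell)\subset \bigl(\sum_{a_1+\cdots+a_k=\ell}\mbF_{a_1}\otimes\cdots\otimes\mbF_{a_k}\bigr)(\!(u)\!)$; applying $\mathbullet{\varrho}^{\otimes k}$ places the result inside the $(\!(u)\!)$-extension of $\sum_{a_1+\cdots+a_k=\ell}\mbF_{a_1}(\mathring{\mcV})\otimes\cdots\otimes\mbF_{a_k}(\mathring{\mcV})$; and $\Theta_k$ finally carries it into $\mbF_\ell(\mathring{\mcV}^{\otimes k})(\!(u)\!)$. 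This proves the second assertion, and the first follows since $Y(\mfg)=\bigcup_\ell\mbF_\ell$ and $\wt{\End}_\C(\mathring{\mcV}^{\otimes k})=\bigcup_\ell \mbF_\ell(\mathring{\mcV}^{\otimes k})$. The only mildly delicate step --- and the main thing to verify --- is the well-definedness of $\Theta_k$, which in turn rests on the fact (ensured by Lemma~\ref{L:bounded}) that operators in $\wt{\End}_\C\mathring{\mcV}$ have homogeneous components of degree bounded above, a property preserved under tensor products.
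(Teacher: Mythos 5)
Your proof is correct and follows the same route as the paper: chain the filtered bound \eqref{Delta_u^k:3} for $\Delta_u^{k-1}$ with the filteredness of $\mathbullet{\varrho}$ (Corollary~\ref{C:rho_0}) and then embed the resulting tensor product of filtered pieces into $\mbF_\ell(\mathring{\mcV}^{\ot k})$. Your map $\Theta_k$ is just a more explicit spelling-out of the ``naturally embeds'' step that the paper leaves implicit.
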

\begin{proof}
 By \eqref{Delta_u^k:3}, $\Delta_u^{k-1}(\mbF_\ell)\subset \mbF_\ell(Y(\mfg)^{\otimes k})(\!(u)\!)$, where 
  $\mbF_\ell(Y(\mfg)^{\otimes k})=\sum_{a_1+\ldots+a_k=\ell}\mbF_{a_1}\otimes \cdots\otimes \mbF_{a_k}$. Since $\mathbullet \varrho$ is also filtered (see Corollary \ref{C:rho_0}), we have $\mathbullet{\varrho}_u^k(\mbF_\ell)\subset \wt\mbF_\ell\!\left(\wt \End_\C(\mathring{\mcV}^{\ot k}) \right)$, with  
\begin{equation*}
   \wt\mbF_\ell\!\left(\wt \End_\C(\mathring{\mcV}^{\ot k}) \right)= \sum_{a_1+\ldots+a_k=\ell}\mbF_{a_1}\!\left(\wt \End_\C(\mathring{\mcV}) \right)\otimes\cdots\otimes\mbF_{a_k}\!\left(\wt \End_\C(\mathring{\mcV}) \right).
\end{equation*}
As $\mbF_m\!\left(\wt \End_\C(\mathring{\mcV}) \right)=\prod_{a\leq m} \End_a(\mathring{\mcV})$, we see that $\bigotimes_{b=1}^k\mbF_{a_b}\!\left(\wt \End_\C(\mathring{\mcV}) \right)$ naturally embeds into the space $\mbF_\ell\!\left(\wt \End_\C(\mathring{\mcV}^{\ot k}) \right)=\prod_{a\leq \ell} \End_a(\mathring{\mcV}^{\ot k})$, provided $\sum_{b=1}^k a_b=\ell$. This proves the assertion. 
\end{proof}
Consider the algebra
\begin{equation*}
 \End_u(\mathring{\mcV}^{\ot k})=\bigcup_{\ell\in \Z}\left(\mbF_\ell\!\left(\wt \End_\C(\mathring{\mcV}^{\ot k}) \right)(\!(u)\!)\right)\subset \End_\C(\mathring{\mcV}^{\ot k})(\!(u)\!).
\end{equation*}
It is $\Z$-filtered with $\mbF_\ell(\End_u(\mathring{\mcV}^{\ot k}))=\mbF_\ell\!\left(\wt \End_\C(\mathring{\mcV}^{\ot k}) \right)(\!(u)\!)$ and 
\begin{equation*}
 \gr_{\mathbb{Z}}\End_u(\mathring{\mcV}^{\ot k})=\bigoplus_{\ell\in \Z}\End_{\ell}(\mathring{\mcV}^{\otimes k})(\!(u)\!)\subset \End_\C(\mathring{\mcV}^{\ot k})(\!(u)\!).
\end{equation*}
Lemma \ref{L:fil-k} implies that $\mathbullet{\varrho}_u^k$ can be viewed as a $\Z$-filtered morphism 
\begin{equation*}
 \mathbullet{\varrho}_u^k:Y(\mfg)\to \End_u(\mathring{\mcV}^{\ot k}).
\end{equation*}
After forming the direct product of algebras $\prod_{m\geq 0} \End_u(\mathring{\mcV}^{\ot m})$, we obtain an algebra morphism
\begin{equation}
 \Psi_u:Y(\mfg)\to \prod_{m\geq 0} \End_u(\mathring{\mcV}^{\ot m}), \quad  \mathrm{pr}_m\circ \Psi_u=\mathbullet{\varrho}_u^m \quad \forall \; m\geq 0, \label{Psi}
\end{equation}
where $\mathrm{pr}_m:\prod_{m\geq 0} \End_u(\mathring{\mcV}^{\ot m})\to  \End_u(\mathring{\mcV}^{\ot m})$ is the $m$-th projection morphism. 
We are now ready to state and prove the main result of this section: 
\begin{theorem}\label{T:PBW}
The morphism $\Psi_u$ defined in \eqref{Psi} is an embedding of algebras, and the epimorphism 
 \begin{equation*}
  \phi:U(\mfs)\cong U(\mathfrak{uce}(\mfg^\prime[t]))\onto \gr Y(\mfg), \quad X_{ir}^\pm \mapsto \bar{x}_{ir}^\pm, \; H_{ir}\mapsto \bar{h}_{ir}
 \end{equation*}
 of Proposition \ref{P:weakPBW} is an isomorphism of algebras. 
\end{theorem}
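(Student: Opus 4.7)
The plan is to relate $\Psi_u$ to the injective morphism $\Phi_u$ of Corollary \ref{C:faith-U(s)} via the associated graded construction, and to deduce both assertions simultaneously. Since $\phi$ is already surjective by Proposition \ref{P:weakPBW}, the central task is to establish its injectivity; the injectivity of $\Psi_u$ will then fall out of a standard filtration argument.

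First I would verify that for each $m\geq 0$ the identity
\[
\gr \mathbullet{\varrho}_u^m \circ \phi \;=\; \rho_{\mfs,u}^m
\]
holds, with the right-hand side naturally viewed as landing in $\bigoplus_\ell \End_\ell(\mathring{\mcV}^{\ot m})[u^{\pm 1}]\subset \gr \End_u(\mathring{\mcV}^{\ot m})$. This is a consequence of three inputs: the commutative diagram \eqref{Delta_u:com} relating $\gr \Delta_u^{m-1}\circ \phi$ to $\phi^{\ot m} \circ \Delta_{\mfs,u}^{m-1}$; the identity $\gr \mathbullet{\varrho} \circ \phi = \mathbullet{\varrho}_0$, which is the definition of $\mathbullet{\varrho}_0$ in \eqref{vrho_0} and an instance of Corollary \ref{C:rho_0}; and the compatibility of $\gr$ with tensor products and with extension of scalars to $(\!(u)\!)$. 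Assembling these yields the global identity $\gr \Psi_u \circ \phi = \Phi_u$.

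Given this, the injectivity of $\phi$ is immediate: if $x \in U(\mfs)$ satisfies $\phi(x)=0$, then $\Phi_u(x) = \gr \Psi_u(\phi(x)) = 0$, and Corollary \ref{C:faith-U(s)} forces $x=0$. Combined with surjectivity, this proves $\phi$ is an isomorphism, which together with Propositions \ref{P:tuce} and \ref{P:g'[t]} gives the desired identification $\gr Y(\mfg) \cong U(\mathfrak{uce}(\mfg^{\prime}[t]))$. For the injectivity of $\Psi_u$, I would argue by contradiction: if $y\in Y(\mfg)$ is nonzero with $\Psi_u(y)=0$, let $\ell$ be minimal with $y\in \mbF_\ell$, so that $\bar y\in \mbF_\ell/\mbF_{\ell-1}\subset \gr Y(\mfg)$ is nonzero; writing $\bar y = \phi(x)$ for a (necessarily nonzero) $x\in U(\mfs)$ and applying $\gr \Psi_u \circ \phi = \Phi_u$ gives $\Phi_u(x)=0$, contradicting Corollary \ref{C:faith-U(s)}.

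The hard part, or at least the fiddly part, will be the bookkeeping: I will need to check that the filtrations on $Y(\mfg)^{\ot m}(\!(u)\!)$ and on $\End_u(\mathring{\mcV}^{\ot m})$ interact correctly with $\mathbullet{\varrho}^{\ot m}$ and $\Delta_u^{m-1}$, so that the associated graded operations commute through all of the compositions involved, and that the filtration chosen on $\prod_{m\geq 0}\End_u(\mathring{\mcV}^{\ot m})$ is arranged so that $\Psi_u$ is filtered and its associated graded factors through the various $\gr \mathbullet{\varrho}_u^m$. No genuinely new representation-theoretic input is needed beyond what is already assembled in Section \ref{Sec:Ver} and the current section; the argument is ultimately a diagram chase terminating in Corollary \ref{C:faith-U(s)}.
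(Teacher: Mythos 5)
Your proposal is correct and follows essentially the same route as the paper: the paper also reduces both claims to the identity that the componentwise associated-graded maps satisfy $\gr\mathbullet{\varrho}_u^m\circ\phi=\rho_{\mfs,u}^m$ (via the diagram \eqref{Delta_u:com} and the definition of $\mathbullet{\varrho}_0$), glues these into a map which it calls $\overline{\Psi}_u$, obtains $\overline{\Psi}_u\circ\phi=\Phi_u$, and concludes injectivity of $\phi$ and then of $\Psi_u$ by the same filtration argument you give. The one small clean-up the paper makes, which you flag as "bookkeeping," is to define $\overline{\Psi}_u$ directly from the individual $\gr\mathbullet{\varrho}_u^m$ rather than as a literal $\gr\Psi_u$, thereby sidestepping any question of whether $\gr$ commutes with the infinite product $\prod_{m\geq 0}$.
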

\begin{proof}
 As, for each $k\geq 0$, $\mathbullet{\varrho}_u^k$ is a filtered morphism $Y(\mfg)\to \End_u(\mathring{\mcV}^{\ot k})$, we may form the associated graded morphisms 
 \begin{equation*}
  \gr \mathbullet{\varrho}_u^k:\gr Y(\mfg)\to  \End_\C(\mathring{\mcV}^{\ot k})(\!(u)\!). 
 \end{equation*}
 By \eqref{grDelta_u^k}, the image of $\gr \mathbullet{\varrho}_u^k$ in fact lies in $\End_\C(\mathring{\mcV}^{\ot k})[u^{\pm 1}]$. We therefore obtain an algebra morphism
 \begin{equation*}
 \overline{\Psi}_u:\gr Y(\mfg)\to \prod_{m\geq 0}\End_\C(\mathring{\mcV}^{\ot m})[u^{\pm 1}], \quad \mathbullet{\mathrm{pr}}_m\circ \overline{\Psi}_u=\gr \mathbullet{\varrho}_u^m \quad \forall \;m\geq 0,
 \end{equation*}
where now $\mathbullet{\mathrm{pr}}_m$ is the $m$-th projection morphism for $\prod_{m\geq 0}\End_\C(\mathring{\mcV}^{\ot m})[u^{\pm 1}]$. 

By definition,  $\mathbullet{\varrho}_0=\gr\mathbullet{\varrho} \circ \phi$ (see \eqref{bvrho0}), and hence the commutativity of the diagram \eqref{Delta_u:com} implies that 
\begin{equation} \label{Psi_u}
 \overline{\Psi}_u \circ \phi = \Phi_u,
\end{equation}
where $\Phi_u:U(\mfs)\to \prod_{m\geq 0}\End_\C(\mathring{\mcV}^{\ot m})[u^{\pm 1}]$ is the embedding of Corollary \ref{C:faith-U(s)}. This implies that $\phi$ is also injective, and hence an isomorphism of algebras. 

The relation \eqref{Psi_u}, together with the just proven fact that $\phi$ is an isomorphism, also implies that $ \overline{\Psi}_u$  is an embedding, from which it follows that $\Psi_u$ is injective using a standard argument. Indeed, given a nonzero element $X\in Y(\mfg)$, 
we may take $\ell\geq 0$ minimal such that $X\in \mbF_\ell$. Let $\bar{X}\in  \gr Y(\mfg)$ denote the image of $X$ in $\mbF_{\ell}/\mbF_{\ell-1}$, which is nonzero by assumption. If $\Psi_u(X)=0$, then $\gr \mathbullet{\varrho}_u^m(\bar X)=0$ for all $m\geq 0$ and hence $\overline{\Psi}_u(\bar X)=0$, which is impossible.   \qedhere
\end{proof}
Recall that if $\mbA$ is a $\Z_{\geq 0}$-filtered algebra with ascending filtration $\{\mbF_k(\mbA)\}_{k\geq 0}$, then the Rees algebra associated to $\mbA$ is 
\begin{equation*}
 R_\hbar(\mbA)=\bigoplus_{k\geq 0}\hbar^k \mbF_k(\mbA)\subset \mbA[\hbar]. 
\end{equation*}
The Rees algebra $R_\hbar(\mbA)$ always satisfies $R_\hbar(\mbA)/(\hbar-1)R_\hbar(\mbA)\cong \mbA$ and $R_\hbar(\mbA)/\hbar R_\hbar(\mbA)\cong \gr \mbA$. The next theorem employs the Rees algebra construction to characterize $Y_\hbar(\mfg)$ in terms of $Y(\mfg)$. 
\begin{theorem}\label{T:flat}
 The assignment $x_{ir}^\pm\mapsto \hbar^r x_{ir}^\pm$, $h_{ir}\mapsto \hbar^r h_{ir}$ extends to an isomorphism of $\C[\hbar]$-algebras 
 \begin{equation*}
  \Psi_\hbar:Y_\hbar(\mfg)\to R_\hbar(Y(\mfg))\subset Y(\mfg)[\hbar].
 \end{equation*}
 Consequently, $Y_\hbar(\mfg)$ is a flat deformation of the algebra $U(\mfs)\cong U(\mathfrak{uce}(\mfg^\prime[t]))$.
\end{theorem}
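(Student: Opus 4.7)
The plan is to construct $\Psi_\hbar$ directly from the presentations, show it is a surjective graded morphism, identify its reduction modulo $\hbar$ with the isomorphism $\phi$ from Theorem \ref{T:PBW}, and then conclude via a graded Nakayama-type argument.

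First, I would verify that $\Psi_\hbar$ is well-defined. Every defining relation of $Y_\hbar(\mfg)$ from Definition \ref{D:Y(g)} is homogeneous with respect to the grading $\deg\hbar=1$, $\deg x_{ir}^\pm=\deg h_{ir}=r$; setting $\hbar=1$ recovers the defining relations of $Y(\mfg)$. Multiplying such a relation of total degree $n$ by $\hbar^n$ and regrouping factors of $\hbar$ (for instance, writing $\hbar^{r+s+1}=\hbar\cdot \hbar^r\cdot \hbar^s$ in \eqref{eq:relexXX}) reproduces exactly the relation of $Y_\hbar(\mfg)$ after substituting $\hbar^r x_{ir}^\pm$ and $\hbar^r h_{ir}$ for $x_{ir}^\pm$ and $h_{ir}$. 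Thus $\Psi_\hbar$ extends uniquely to a $\C[\hbar]$-algebra homomorphism $Y_\hbar(\mfg)\to Y(\mfg)[\hbar]$. Since $x_{ir}^\pm, h_{ir}\in\mbF_r$, its image lies in $R_\hbar(Y(\mfg))=\bigoplus_{k\geq 0}\hbar^k\mbF_k$, and $\Psi_\hbar$ is graded of degree zero with respect to the Rees grading on the target.

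Next, for surjectivity, any homogeneous element of $\hbar^k\mbF_k$ is spanned by monomials of the form $\hbar^k y_1\cdots y_m$ where each $y_a$ is a generator of filtration degree $r_a$ with $\sum_a r_a\leq k$. Rewriting such a monomial as $\hbar^{k-\sum r_a}\prod_a(\hbar^{r_a}y_a)$ exhibits it as an element of $\mathrm{Image}(\Psi_\hbar)$. For the reduction modulo $\hbar$, Proposition \ref{P:s->Yh} identifies $Y_\hbar(\mfg)/\hbar Y_\hbar(\mfg)$ with $U(\mfs)$, while the standard Rees identification gives $R_\hbar(Y(\mfg))/\hbar R_\hbar(Y(\mfg))\cong \gr Y(\mfg)$, sending $\hbar^r x$ (for $x\in\mbF_r$) to $\bar x\in \mbF_r/\mbF_{r-1}$. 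The induced morphism $\bar\Psi_\hbar:U(\mfs)\to \gr Y(\mfg)$ therefore sends $X_{ir}^\pm,H_{ir}$ to $\bar x_{ir}^\pm,\bar h_{ir}$, agreeing on generators with $\phi$ from Proposition \ref{P:weakPBW}; by Theorem \ref{T:PBW}, $\bar\Psi_\hbar$ is an isomorphism.

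Finally, to deduce injectivity of $\Psi_\hbar$, set $K=\ker\Psi_\hbar$, a graded $\C[\hbar]$-submodule of $Y_\hbar(\mfg)$. Right-exactness applied to the short exact sequence $0\to K\to Y_\hbar(\mfg)\to R_\hbar(Y(\mfg))\to 0$ combined with the isomorphism $\bar\Psi_\hbar$ forces $K/\hbar K=0$, so $K=\hbar K$. Since $Y_\hbar(\mfg)$ is $\Z_{\geq 0}$-graded and $\hbar$ has degree one, the minimal-degree homogeneous component of $K$ must vanish, whence $K=0$. For the second statement, $R_\hbar(Y(\mfg))\subset Y(\mfg)[\hbar]$ is torsion-free and hence flat over the PID $\C[\hbar]$; combined with $Y_\hbar(\mfg)/\hbar Y_\hbar(\mfg)\cong U(\mfs)$ and the identification $\mfs\cong \mathfrak{uce}(\mfg'[t])$ from Propositions \ref{P:tuce} and \ref{P:g'[t]}, this completes the proof. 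The real substance lies in Theorem \ref{T:PBW}, which is what promotes $\bar\Psi_\hbar$ from a mere surjection to an isomorphism; without it the graded Nakayama step collapses and only surjectivity of $\Psi_\hbar$ survives.
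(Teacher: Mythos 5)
Your proof is correct and follows essentially the same strategy as the paper: define $\Psi_\hbar$ via the grading, observe surjectivity, identify the reduction modulo $\hbar$ with $\phi$ and invoke Theorem \ref{T:PBW}, then use the grading and $\hbar$-torsion-freeness of $R_\hbar(Y(\mfg))$ to upgrade surjectivity to injectivity. The paper phrases the last step element-wise (extract the maximal power of $\hbar$ from a hypothetical $X\in\ker\Psi_\hbar$ and derive a contradiction), whereas you package it as a graded Nakayama argument ($K=\hbar K$ plus $\Z_{\geq 0}$-gradedness forces $K=0$); these are the same computation in different clothing.

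One small precision point: "right-exactness" of $-\ot_{\C[\hbar]}\C$ applied to $0\to K\to Y_\hbar(\mfg)\to R_\hbar(Y(\mfg))\to 0$, together with $\bar\Psi_\hbar$ being an isomorphism, only gives $K\subset\hbar Y_\hbar(\mfg)$ directly; to conclude $K/\hbar K=0$ (equivalently $K\subset\hbar K$) you need either $\mathrm{Tor}_1^{\C[\hbar]}(R_\hbar(Y(\mfg)),\C)=0$, which is the flatness of $R_\hbar(Y(\mfg))$ you invoke later, or the element-wise observation that if $k=\hbar y\in K$ then $\hbar\Psi_\hbar(y)=0$ and torsion-freeness of $R_\hbar(Y(\mfg))$ gives $y\in K$. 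So the flatness/torsion-freeness fact you record at the end is actually needed already in the injectivity step; stating it up front would make the argument airtight. This is exactly the role torsion-freeness plays in the paper's version as well.
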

\begin{proof}
 That the assignment $x_{ir}^\pm,h_{ir}\mapsto \hbar^r x_{ir}^\pm, \hbar^r h_{ir}$ extends to a homomorphism $\Psi_\hbar$ of $\C[\hbar]$-algebras is verified directly (cf. \eqref{zeta-iso}). Since $\{\hbar^r x_{ir}^\pm, \hbar^r h_{ir}\}_{i\in I,r\geq 0}$ generate $R_\hbar(Y(\mfg))$ as a $\C[\hbar]$-algebra, $\Psi_\hbar$ is surjective. 
 
 The composition $\varpi$ of the isomorphism $R_\hbar(Y(\mfg))/\hbar R_\hbar(Y(\mfg))\to \gr Y(\mfg)$ with the quotient homomorphism $R_\hbar(Y(\mfg))\to R_\hbar(Y(\mfg))/\hbar R_\hbar(Y(\mfg))$ satisfies 
 \begin{equation*}
  \hbar^k x_{ik}^\pm \mapsto\bar{x}_{ik}^\pm,\; \hbar^k h_{ir} \mapsto \bar{h}_{ik} \quad \forall \; i\in I\; \text{ and }\; k\geq 0. 
 \end{equation*}
 Moreover, $\varpi\circ \Psi_\hbar$ sends the ideal $\hbar Y_\hbar(\mfg)$ to zero and thus factors through the quotient $Y_\hbar(\mfg)/\hbar Y_\hbar(\mfg)$ to give 
 $\Psi_0:Y_0(\mfg)\to \gr Y(\mfg)$. After using Proposition \ref{P:s->Yh} and Theorem \ref{T:PBW} to identify both the domain and codomain of $\Psi_0$ with $U(\mfs)$, $\Psi_0$ becomes the identity map. 
 
 Now suppose that there is a nonzero $X\in \mathrm{Ker} \Psi_\hbar$. Let $m$ be the maximal non-negative integer such that $X=\hbar^m Y$ for some  $Y\in Y_\hbar(\mfg)$ (that $m$ is finite follows from the fact that $Y_\hbar(\mfg)$ is $\Z_{\ge 0}$-graded with $\deg \hbar=1$). Since $\Psi_\hbar$ is a $\C[\hbar]$-algebra morphism and $R_\hbar(Y(\mfg))$ is torsion free, $Y\in \mathrm{Ker}\Psi_\hbar$. By maximality of $m$, the image $\bar Y$ of $Y$ in $Y_0(\mfg)$ is nonzero. Since $\Psi_0:Y_0(\mfg)\to \gr Y(\mfg)$ is an isomorphism, $\Psi_0(\bar Y)\neq 0$. This is a contradiction as $\Psi_0(\bar Y)=\overline{\Psi_\hbar(Y)}=0$. Therefore $\Psi_\hbar$ is injective, and thus an isomorphism.
 
To prove that $Y_\hbar(\mfg)$ is a flat deformation of $U(\mfs)$, it remains to see that it is flat (or equivalently, torsion free) as a $\C[\hbar]$-module. This is a consequence of the fact that it embeds into the torsion free space $Y(\mfg)[\hbar]$, and hence is itself torsion free. 
\end{proof}

\appendix
\section{} \label{app:A}

Let $\pi:U(\mfg)\to \End(V)$ be a faithful representation of an arbitrary complex Lie algebra $\mfg$. Let $\Delta$ and $\veps$ denote the standard coproduct and counit of $U(\mfg)$, respectively. 

For each $k>0$, set 
\begin{equation*}
\pi_k=\pi^{\otimes k}\circ \Delta^{(k-1)}: U(\mfg)\to \End(V) ^{\otimes k}\subset \End(V^{\otimes k}),
\end{equation*}
where $\Delta^{(k)}:U(\mfg)\to U(\mfg)^{\otimes (k+1)}$ is defined recursively by $\Delta^{(0)}=\mathrm{id}$ and 
\begin{equation*}
 \Delta^{(k)}=(\mathrm{id}^{\otimes (k-1)} \otimes \Delta) \circ \Delta^{(k-1)}. 
\end{equation*}
By coassociativity, how this is defined is not important. 
By convention, $\pi_0$ is the counit $\veps$. We then define 
\begin{equation*}
 \Psi:U(\mfg)\to \prod_{k\in \mathbb{N}}\End(V^{\otimes k}),\quad \mathrm{pr}_n\circ \Psi=\pi_n \quad \forall \; n\in \mathbb{N}.
\end{equation*}
Here $\mathbb{N}$ is the set of non-negative integers and $\mathrm{pr}_n:\prod_{k\in \mathbb{N}}\End(V^{\otimes k})\to \End(V^{\otimes n})$ is the $n$-th projection homomorphism. 
\begin{theorem}\label{T:PBW2}
 $\Psi$ is an injective homomorphism of algebras. 
\end{theorem}
We first prove the theorem in \S\ref{ssec:A} given the following assumption: 
\begin{enumerate}[label=(\Alph*),font=\upshape]
 \item \label{A}$\mathrm{id}_V\notin \pi(\mfg)$. 
\end{enumerate}
In particular, this holds when $\mfg$ has a trivial center. We will then explain in \S\ref{ssec:NoA} how to generalize to the case where $\mathrm{id}_V\in \pi(\mfg)$.

Let us set $\mfg_\pi=\pi(\mfg)$; by the faithfulness of $\pi$, this is a Lie subalgebra of $\mathfrak{gl}(V)$ isomorphic to $\mfg$. We also let $\{\mbF_k\}_{k\in \mathbb{N}}$ be the standard filtration on $U(\mfg)$ (so that $\mathrm{gr}(U(\mfg))\cong S(\mfg)$). 

%%%%%%%%%%%%%%%%%%%%%%%%%%%%%%%%%%%%%%%%%%%%%%%%%%%%%%%%%%%%%%%%%%%%%%%%
% Subsection: Proof of Theorem assuming A
%%%%%%%%%%%%%%%%%%%%%%%%%%%%%%%%%%%%%%%%%%%%%%%%%%%%%%%%%%%%%%%%%%%%%%%%
\subsection{Proof of Theorem \ref{T:PBW2} given \ref{A}}\label{ssec:A}

Since $U(\mfg)=\bigcup_{k\in \mathbb{N}}\mbF_k$, it suffices to show that $\Psi|_{\mbF_k}$ is injective for each $k\in \mathbb{N}$. We will in fact prove the stronger assertion of the following lemma: 
\begin{lemma}\label{L:gen}
 For each $k\in \mathbb{N}$, 
 $
  \pi_k|_{\mbF_k}:\mbF_k\to \End(V)^{\otimes k}
 $
 is injective.
\end{lemma}
\begin{proof}
 The case $k=0$ is trivial, so let us fix $k\geq 1$.  It suffices to prove that
 \begin{equation}\label{suffices}
 \mathrm{Ker}(\pi_k)\cap (\mbF_\ell \setminus \mbF_{\ell-1})=\emptyset \quad \forall \quad 1\leq \ell\leq k. 
 \end{equation}
 Fix any such $\ell$ and define $S^\ell(\mfg_\pi)_k$ by 
 \begin{equation*}
  S^\ell(\mfg_\pi)_k=\iota_{\ell,k}(S^\ell(\mfg_\pi)),
 \end{equation*}
 where $\iota_{\ell,k}$ is the embedding 
 \begin{equation*}
  \iota_{\ell,k}:\mfg_\pi^{\otimes \ell}\to \mfg_\pi^{\otimes \ell}\otimes (\C\cdot\mathrm{id}_V)^{\otimes (k-\ell)} \subset \End(V)^{\otimes k}, \quad X\mapsto X\otimes \mathrm{id}_V^{\otimes (k-\ell)}.
 \end{equation*}
Let $\mcE_{\ell,k}\subset \End(V)^{\otimes k}$ be given by 
\begin{equation*}
 \mcE_{\ell,k}=\mathrm{Span}_\C\{y_1\otimes \cdots \otimes y_k\,:\, y_i\in \End(V), \; y_a=\mathrm{id}_V\; \text{ for some }\; 1\leq a\leq \ell\}.
\end{equation*}
This space satisfies $\mcE_{\ell,k}\cap \iota_{\ell,k}(\mfg^{\otimes \ell})=\{0\}$ and thus $\mcE_{\ell,k}\cap S^\ell(\mfg_\pi)_k=\{0\}$, as can be seen by extending any basis of $\mfg_\pi$ to a basis of $\End(V)$ containing $\mathrm{id}_V$ and applying the assumption \ref{A}. We may therefore choose a linear projection
\begin{equation*}
 \mathds{P}_{\ell,k}:\End(V)^{\otimes k}\onto S^\ell(\mfg_\pi)_k \quad \text{ with } \quad \mathds{P}_{\ell,k}|_{\mcE_{\ell,k}}=0. 
\end{equation*}
 Consider the composite
\begin{equation*}
 \omega_{\ell,k}=\mathds{P}_{\ell,k}\circ \pi_k|_{\mbF_\ell}: \mbF_\ell\to S^\ell(\mfg_\pi)_k.
\end{equation*}

\noindent \textit{Claim:} $\omega_{\ell,k}(\mbF_{\ell-1})=0$. 

\medskip 

Consider a product $x_1\cdots x_m$ with $x_i\in \mfg$ and $m\leq \ell-1$.
Since 
\begin{equation*}
\Delta^{(k-1)}(x_i)=\sum_{a=1}^k (x_i)_a,\quad \text{ where }\quad (x)_a=1^{\otimes (a-1)}\otimes x \otimes 1^{\otimes (k-a)},
\end{equation*}
we have 
\begin{equation}\label{pik-mon}
 \pi_k(x_1\cdots x_m)=\sum_{a_1,\ldots,a_m=1}^k (\pi(x_1))_{a_1}\cdots (\pi(x_m))_{a_m}\in \mcE_{\ell,k}. 
\end{equation}
Applying $\mathds{P}_{\ell,k}$ then gives $\omega_{\ell,k}(x_1\cdots x_m)=0$, from which the claim follows.

Consequently, $\omega_{\ell,k}$ induces a linear map
\begin{equation*}
 \varpi_{\ell,k}:\mbF_\ell/\mbF_{\ell-1}\to S^\ell(\mfg_\pi)_k.
\end{equation*}
To complete the proof of \eqref{suffices}, it is enough to show that $ \varpi_{\ell,k}$ is injective. In fact, it is an isomorphism. 

\medskip 

\noindent \textit{Claim}: $\varpi_{\ell,k}$ is an isomorphism. 

\medskip

This is essentially just the PBW theorem for $U(\mfg)$. Using the formula \eqref{pik-mon} with $m=\ell$, we find that
\begin{align*}
 \pi_k(x_1\cdots x_\ell)&\equiv  \sum_{\substack{1\leq a_i\leq \ell\\ a_i\neq a_j \; \forall \,i\neq j}} (\pi(x_1))_{a_1}\cdots (\pi(x_\ell))_{a_\ell} \mod \mcE_{\ell,k}\\
                     &\equiv \sum_{\sigma\in S_\ell}\pi(x_{\sigma(1)})\otimes \cdots \otimes \pi(x_{\sigma(\ell)}) \mod \mcE_{\ell,k}
\end{align*}
It follows that 
\begin{equation*}
 \varpi_{\ell,k}(\bar x_1 \cdots \bar x_\ell)=\sum_{\sigma\in S_\ell}\pi(x_{\sigma(1)})\otimes \cdots \otimes \pi(x_{\sigma(\ell)}).
\end{equation*}
By the PBW Theorem for $U(\mfg)$, we already know $\mbF_\ell/\mbF_{\ell-1}\cong S^\ell(\mfg)$ and, after viewing $S^\ell(\mfg)$ as the subspace of $\mfg^{\otimes \ell}$ consisting of symmetric tensors (as we have been doing above), the standard identification is given by the symmetrizing map
\begin{equation*}
 \bar x_1 \cdots \bar x_\ell \mapsto \frac{1}{\ell!}\sum_{\sigma \in S_\ell} x_{\sigma(1)}\otimes \cdots \otimes x_{\sigma(\ell)}.
\end{equation*}
After identifying $\mfg$ with $\mfg_\pi$ and renormalizing, this is precisely $\varpi_{\ell,k}$. Hence $\varpi_{\ell,k}$ is an isomorphism.
\end{proof}

%%%%%%%%%%%%%%%%%%%%%%%%%%%%%%%%%%%%%%%%%%%%%%%%%%%%%%%%%%%%%%%%%%%%%%%%
% Subsection: Proof of Theorem assuming no A
%%%%%%%%%%%%%%%%%%%%%%%%%%%%%%%%%%%%%%%%%%%%%%%%%%%%%%%%%%%%%%%%%%%%%%%%
\subsection{Proof of Theorem \ref{T:PBW2} in general}\label{ssec:NoA}

We now consider the case where $\mathrm{id}_V\in \pi(\mfg)$. In this case $\mfg$ has a non-trivial central element $\mathds{1}$ such that 
\begin{equation*}
 \pi(\mathds{1})=\mathrm{id}_V. 
\end{equation*}
Choose a subspace $\mfa$ of $\mfg$ complimentary to $\C\mathds{1}$ (not necessarily a Lie subalgebra of $\mfg$). Let us fix an ordered basis 
$\{x_\lambda\}_{\lambda\in \Lambda}$ of $\mfa$, and let $\mbU$ denote the subspace of $U(\mfg)$ spanned by ordered monomials in this basis. In particular, we have the vector space decomposition 
\begin{equation*}
 U(\mfg)\cong \C[\mathds{1}]\otimes \mbU. 
\end{equation*}

The standard filtration $\mbF_k$ on $U(\mfg)$ induces a filtration $\{\mbF_k^\mfa\}_{k\in \mathbb{N}}$ on $\mbU$ given by $\mbF_k^\mfa = \mbF_k \cap \mbU$. Let $\pi_k^\mfa=\pi_k|_{\mbU}$. 

\medskip 

\noindent \textit{Claim:} $\pi_k^\mfa|_{\mbF_k^\mfa}:\mbF_k^\mfa\to \End(V)^{\otimes k}$ is injective for each $k\in \mathbb{N}$. 

\medskip

The claim does not automatically follow from Lemma \ref{L:gen} since $\mfa$ may not be a Lie subalgebra of $\mfg$ and hence $\pi_k^\mfa$ is no longer a Lie algebra representation. However, the proof of Lemma \ref{L:gen} does still go through in our present setting; one just needs to know that the symmetrization map still provides an isomorphism  
\begin{equation*}
\mbF_k^\mfa/\mbF_{k-1}^\mfa\stackrel{\sim}{\rightarrow} S^k(\mfa) \quad \forall \quad k\geq 1, 
\end{equation*}
which is a consequence of the Poincar\'{e}-Birkhoff-Witt Theorem for $\mfg$.

Now let $X\in U(\mfg)$ be an arbitrary element. Choose $\ell\in \mathbb{N}$ such that $X\in \mbF_\ell$. Then $X=P(\mathds{1})$, where $P(\mathds{1})$ is a polynomial in $\mathds{1}$ with coefficients in $\mbU$ of degree at most $\ell$. Since $\pi_k(\mathds{1})=k\cdot \mathrm{id}_{V^{\otimes k}}$ for each $k$, we have   
\begin{equation*}
 \pi_k(X)=\pi_k^\mfa(P(k)) \quad \forall \;k\in \mathbb{N}. 
\end{equation*}
It follows that if $\Psi(X)=0$, then $P(k)\in \mathrm{Ker}(\pi_k^\mfa)$ for all $k\in \mathbb{N}$. Moreover, by assumption, $P(k)$ belongs to $\mbF_\ell^\mfa$ for each $k$. By the above claim, this means that $P(k)=0$ for all $k\geq \ell$. This is only possible if $X=P(\mathds{1})=0$. $\hfill \square$

%%%%%%%%%%%%%%%%%%%%%%%%%%%%%%%%%%%%%%%%%%%%%%%%%%%%%%%%%%%%%%%%%%%%%%%%
%%%%%%%%%%%%%%%%%%%%%%%%%%%%%%%%%%%%%%%%%%%%%%%%%%%%%%%%%%%%%%%%%%%%%%%%
% Bibliography
%%%%%%%%%%%%%%%%%%%%%%%%%%%%%%%%%%%%%%%%%%%%%%%%%%%%%%%%%%%%%%%%%%%%%%%%
%%%%%%%%%%%%%%%%%%%%%%%%%%%%%%%%%%%%%%%%%%%%%%%%%%%%%%%%%%%%%%%%%%%%%%%%

\end{document}